\documentclass[a4paper,reqno, 11pt]{amsart}

\usepackage{amssymb,amsmath}
\usepackage{amsopn}
\usepackage{mathrsfs}
\usepackage{color}
\usepackage{mathtools}
\usepackage{wasysym}
\usepackage{vmargin}
\usepackage{pdfsync}
\usepackage{vmargin}
\usepackage{color}
\usepackage{amscd}
\usepackage{verbatim}
\usepackage{bbm}
\usepackage[capitalise]{cleveref}

\usepackage[usenames, dvipsnames]{xcolor}

\usepackage{tikz}
\usepackage{xspace}

\theoremstyle{plain} 
\newtheorem{prop}{Proposition}[section] 
\newtheorem{thm}[prop]{Theorem}

\newtheorem{lem}[prop]{Lemma}
\newtheorem{cor}[prop]{Corollary}
\theoremstyle{definition}
\newtheorem{defi}[prop]{Definition}
\newtheorem{rem}[prop]{Remark}

\newtheorem{c-ex}[prop]{Contre-exemple}



\newcommand{\ioe}{\leqslant}
\newcommand{\soe}{\geqslant}
\newcommand{\wt}{\widetilde}

\newcommand{\1}{\mathrm{1~\hspace{-1.4ex}l}}

\def\llambda{|b|^{\frac{4}{41}}}

\DeclareMathOperator{\ad}{ad}

\DeclareMathOperator{\Ran}{Ran}

\DeclareMathOperator{\Span}{Span}
\DeclareMathOperator{\Id}{Id}
\DeclareMathOperator{\sign}{sign}
\DeclareMathOperator{\dom}{Dom}
\DeclareMathOperator{\Quad}{quad}
\DeclareMathOperator{\Cub}{cub}
\DeclareMathOperator{\reg}{reg}
\DeclareMathOperator{\lin}{lin}
\DeclareMathOperator{\J}{\scriptscriptstyle J}
\DeclareMathOperator{\supp}{supp}
\DeclareMathOperator{\Rref}{ref}

\def\R{\mathbb{R}}
\def\C{\mathbb{C}}
\def\N{\mathbb{N}}
\def\P{\mathbb{P}}
\def\Z{\mathbb{Z}}

\def\eps{\varepsilon}

\def \F{\mathcal{F}}
\def \S{\mathcal{S}}
\def \H{\mathcal{H}}
\def \G{\mathcal{F}}

\def \O{\mathcal{O}}
\def \M{\mathcal{M}}
\def \U{\mathcal{U}}
\def \E{\mathcal{E}}
\def \V{\mathcal{V}}
\def \tild{\widetilde}
\def \Cspace{\mathcal{C}}

\title[STLC of the Schrödinger equation despite a drift, thanks to a cubic term]{Small-time local controllability of the bilinear Schrödinger equation, despite a quadratic obstruction, thanks to a cubic term}
\date{}
\author[Mégane Bournissou]{Mégane Bournissou$^*$}
\subjclass[2020]{Primary: 93B05, 93C20; Secondary: 81Q93.}
\keywords{Exact controllability, Schrödinger equation, bilinear control, power series expansion. }
\thanks{$^*$Univ Rennes, CNRS, IRMAR - UMR 6625, F-35000 Rennes, France.}
\thanks{This work benefits from the support of ANR project TRECOS,
grant ANR-20-CE40-0009.}
\email{megane.bournissou@ens-rennes.fr}
\begin{document}

\begin{abstract}
We consider a 1D linear Schrödinger equation, on a bounded interval, with Dirichlet boundary conditions and bilinear control. We study its controllability around the ground state when the linearized system is not controllable. More precisely, we study to what extent the nonlinear terms of the expansion can recover the directions lost at the first order. 

In the works \cite{BM14, B21bis}, for any positive integer $n$, assumptions have been formulated under which the quadratic term induces a drift in the nonlinear dynamics, quantified by the $H^{-n}$-norm of the control. This drift is an obstruction to the small-time local controllability (STLC) under a smallness assumption on the controls in regular spaces.

In this paper, we prove that for controls small in less regular spaces, the cubic term can recover the controllability lost at the linear level, despite the quadratic drift. 
The proof is inspired by Sussman's method to prove the sufficiency of the $\S(\theta)$ condition for STLC of ODEs. However, it uses a different global strategy relying on a new concept of tangent vector, better adapted to the infinite-dimensional setting of PDEs. 
%
%
%
Given a target, we first realize the expected motion along the lost direction by using control variations for which the cubic term dominates the quadratic one. Then, we correct the other components exactly, by using the STLC in projection result of \cite{B21}, with simultaneous estimates of weak norms of the control. These estimates ensure that the new error along the lost direction is negligible, and we conclude with the Brouwer fixed point theorem. 
\end{abstract}

\maketitle


\section{Introduction}

\subsection{Description of the control system}
Let $T>0$. In this paper, we consider the 1D linear Schrödinger equation given by, 
\begin{equation}  
\label{Schrodinger} \left\{
    \begin{array}{ll}
        i \partial_t \psi(t,x) = - \partial^2_x \psi(t,x) -u(t)\mu(x)\psi(t,x),  \quad &(t,x) \in (0,T) \times (0,1),\\
        \psi(t,0) = \psi(t,1)=0, \quad &t \in (0,T).
    \end{array}
\right.  \end{equation}
This equation is used in quantum physics to describe a quantum particle stuck in an infinite potential well and subjected to a uniform electric field whose amplitude is given by $u(t)$. The function $\mu : (0,1) \rightarrow \mathbb{R}$ depicts the dipolar moment of the particle. This equation is a bilinear control system where the state is the wave function $\psi$ such that for all time $\| \psi(t) \|_{L^2(0,1)} = 1$ and $u : (0,T) \rightarrow \mathbb{R}$ denotes a scalar control. 

\subsection{Functional settings}
Unless otherwise specified, in space, we will work with complex valued functions. The Lebesgue space $L^2(0,1)$ is equipped with the hermitian scalar product given by
$$\langle f,g\rangle := \int_0^1 f(x) \overline{g(x)}dx, \quad \forall f, g \in L^2(0,1).$$ Let $\mathcal{S}$ be the unit-sphere of $L^2(0,1)$.
The operator $A$ is defined by
\begin{equation*}
\dom(A):=H^2(0,1) \cap H^1_0(0,1) 
\quad
\text{ and }
\quad
A\varphi:=-\frac{d^2 \varphi}{dx^2}.
\end{equation*}
Its eigenvalues and eigenvectors are respectively given by
\begin{equation*}
\forall j \in \N^*, \quad \lambda_j:= (j \pi)^2 \quad \text{ and } \quad \varphi_j:=\sqrt{2} \sin(j \pi \cdot).
\end{equation*}
The family of eigenvectors $(\varphi_j)_{j \in \N^*}$ is an orthonormal basis of $L^2(0,1)$. We denote by,  
$$\forall j \in \N^*, \quad \psi_j(t,x):= \varphi_j(x) e^{-i \lambda_j t}, \quad \forall (t,x) \in \R \times [0,1],$$ 
the solutions of the Schrödinger equation \eqref{Schrodinger} with $u \equiv 0$ and initial data $\varphi_j$ at time $t=0$. When $j=1$, $\psi_1$ is called the ground state. We also introduce the normed spaces linked to the operator $A$, given by, for all $s \soe 0$, 
\begin{multline*}
H^s_{(0)}(0,1):=\dom(A^{\frac{s}{2}}) 
\\
\text{ endowed with the norm }
\quad 
\left\|
\varphi
\right\|_{H^s_{(0)}(0,1)} 
:=
\| ( \langle \varphi, \varphi_j \rangle)\|_{h^s(\N^*)}
:=
\left( 
\sum \limits_{j=1}^{+\infty} 
\left| 
j^s \langle  \varphi, \varphi_j\rangle  
\right|^2 
\right)^{\frac{1}{2}}.
\end{multline*}
Let $T>0$. For $u \in L^1(0,T)$, the family $(u_n)_{n \in \N}$ of the iterated primitives of $u$ is defined by induction as, 
\begin{equation*}
u_0:=u \quad \text{ and } \quad \forall n \in \mathbb{N}, \ u_{n+1}(t) := \int_0^t u_n(\tau) d\tau, \quad t \in [0,T].
\end{equation*}
Sometimes, to uniformize the notations of the primitives and derivatives of $u$, we will write $u^{(n)}$ when $n$ is negative to denote $u_{|n|}$, the $|n|$-th primitive of $u$. 

\noindent We will also consider, for any integer $k \in \N$, $H^k \left( (0,T), \R \right)$, the usual integer-order real Sobolev space, equipped with the usual $H^k(0,T)$-norm 
and $H^k_0(0,T)$ the adherence of $C_c^{\infty}(0,T)$, the set of functions with compact support inside $(0,T)$, for the topology $\| \cdot \|_{H^k(0,T)}$. For any integer $k \in \N^*$, a negative Sobolev norm is defined by,
\begin{equation}
\label{def_norm_faible}
\| u \|_{H^{-k}(0,T)} :=|u_1(T)|+\| u_k \|_{L^2(0,T)},  \quad u \in L^2(0,T). 
\end{equation}
These norms don't coincide with the usual $H^{-k}$-norms, but this definition is taken as these quantities arise naturally in both the nonlinear and linearized dynamics.

\subsection{Assumptions on the dipolar moment $\mu$}
Let us make precise the assumptions on the dipolar moment $\mu$ we shall consider in the following.

\smallskip \noindent
(H$_{\reg}$)
The function $\mu$ is in $H^{11}( (0,1), \R)$ with the following boundary conditions 
\begin{equation}
\label{mu_bc}
\mu'(0)
=
\mu'(1)
=
\mu^{(3)}(0)
=
\mu^{(3)}(1)
=
0
.
\end{equation}

\smallskip \noindent
(H$_{\lin}$) 
There exists an integer $K \in \N^* - \{1 \}$ such that 
\begin{equation}
\label{lin_nul}
\langle \mu \varphi_1, \varphi_K \rangle =0,
\end{equation}
\begin{equation}
\label{H_lin_2}
\text{and there exists } 
c>0 
\text{ such that for all } 
j \in \N^*- \{K\},
\quad
\left|
\langle \mu \varphi_1, \varphi_j \rangle 
\right| 
\soe 
\frac{c}{j^7}.
\end{equation}
Define, for $p=1, 2$ and $3$, the following quadratic (with respect to $\mu$) coefficients, 
\begin{equation}
\label{def_Apk}
A^p_K
:=
(-1)^{p-1}
\sum 
\limits_{j=1}^{+\infty} 
\left(\lambda_j -  \frac{\lambda_1+\lambda_K}{2} \right) 
( \lambda_K-\lambda_j)^{p-1} 
(\lambda_j -\lambda_1)^{p-1}
\langle 
\mu 
\varphi_1, 
\varphi_j
\rangle  
\langle 
\mu 
\varphi_K, 
\varphi_j
\rangle.
\end{equation}

\smallskip \noindent
(H$_{\Quad}$) 
The first two quadratic coefficients vanish and the third one does not vanish
\begin{align}
\label{quad_nul_1}
A^1_K
&=
0
,
\\
\label{quad_nul_2}
A^2_K
&=
0
,
\\
\label{quad_non_nul}
A^3_K
&\neq 0
.
\end{align}

\smallskip \noindent
(H$_{\Cub}$) 
The following cubic (with respect to $\mu$) coefficient does not vanish
\begin{equation}
\label{cub_non_nul}
C_K := 
\sum \limits_{j=1}^{+\infty} 
\left(
\lambda_1- \lambda_j
\right)
\langle
\mu 
\varphi_1
,
\varphi_j 
\rangle
\langle 
\mu'^2
\varphi_K
,
\varphi_j 
\rangle
-
\sum \limits_{j=1}^{+\infty} 
\left(
\lambda_j- \lambda_K
\right)
\langle
\mu 
\varphi_j
,
\varphi_K 
\rangle
\langle 
\mu'^2
\varphi_1
,
\varphi_j 
\rangle
\neq 0.
\end{equation}
%
\begin{rem}
\label{decay_coeff}
Notice that, under (H$_{\reg}$), integrations by parts and Riemann-Lebesgue lemma lead to 
\begin{equation}
\label{coeff_IPP}
\forall q \in \N^*, 
\quad 
\langle 
\mu 
\varphi_q
, 
\varphi_j 
\rangle 
= 
\frac{
12 q
}
{
\pi^{6} 
j^{7}
} 
\left( 
(-1)^{j+q} \mu^{(5)}(1)
-\mu^{(5)}(0) 
\right) 
+ 
\underset{j \rightarrow +\infty}{o} 
\left( 
\frac{1}{j^{7}} 
\right). 
\end{equation}
Thus, all the series considered in \eqref{def_Apk} and \eqref{cub_non_nul} converge absolutely. 
\end{rem}

\begin{rem}
\label{rem:lie_brackets}
For smooth vector fields $X, Y$ in $C^{\infty}(\R^d, \R^d)$, the Lie bracket $[X,Y]$ is defined as the following smooth vector field: 
$
[
X,
Y]
(x)
:=
X'(x) Y(x)
- Y'(x) X(x).
$ 
Notice that the sign convention chosen is not usual. 
We also define by induction
\begin{equation*}
\ad_{X}^0(Y) := Y 
\quad
\text{ and }
\quad 
\forall k \in \N, 
\quad
\ad_{X}^{k+1}(Y) := [X, \ad_{X}^{k}(Y)].  
\end{equation*}
At least formally, the assumption (H$_{\Quad}$) can be written in terms of Lie brackets as 
\begin{equation*}
\forall p=1, 2, 
\
\langle  [\ad_{A}^{p-1}(\mu), \ad_{A}^{p}(\mu) ] \varphi_1,  \varphi_K\rangle  =0
\ 
\text{ and }
\ 
\langle  [\ad_{A}^{2}(\mu), \ad_{A}^{3}(\mu) ] \varphi_1,  \varphi_K\rangle \neq 0
.
\end{equation*}
Notice that the last Lie bracket is exactly the one along which the quadratic order adds a drift, denying $W^{3, \infty}$-STLC (see \cref{def_STLC}) for finite-dimensional systems $\dot{x}=f_0(x)+uf_1(x)$ in \cite[Theorem 3]{BM18}. Similarly, (H$_{\Cub}$) can be written as
\begin{equation*}
\langle 
[ \ad_A(\mu),
[ \ad_A(\mu), 
\mu]
]
\varphi_1, 
\varphi_K
\rangle
\neq 0.
\end{equation*}
All these computations can be made rigorous when $\mu$ satisfies (H$_{\reg}$) for instance. In that case, the iterated Lie brackets are well-defined (for all $k \in \N^*$, to compute $\ad^k_A(\mu)\varphi_1$, one needs to check that $\ad^{k-1}_A(\mu)\varphi_1$ is in $\dom A$) and denote commutators of operators. 
\end{rem}
%
%
%
%
At a heuristic level, assumptions (H$_{\reg}$), (H$_{\lin}$), (H$_{\Quad}$) and (H$_{\Cub}$) entail that, in the asymptotic of small controls (in a norm to be specified), the leading terms of the solution $\psi$ of the Schrödinger equation  \eqref{Schrodinger} along the lost direction are given by
\begin{equation}
\label{eq:heuristic}
\langle
\psi(T), 
\varphi_K
e^{-i \lambda_1 T}
\rangle 
\approx
- iA^3_K \int_0^T u_3(t)^2 dt 
+
iC_K
\int_0^T u_1(t)^2 u_2(t) dt. 
\end{equation} 
The existence of a function $\mu$ satisfying (H$_{\reg}$), (H$_{\lin}$), (H$_{\Quad}$) and (H$_{\Cub}$) is proved in \cref{existence_mu}.

\subsection{Main result}
First, we state the notion of small-time local controllability (STLC) used in this paper, stressing the smallness assumption imposed on the control, as it plays a key role in the validity of controllability results.
\begin{defi}
\label{def_STLC}
Let $(E_T, \|\cdot\|_{E_T})$ be a family of normed vector spaces of real functions defined on $[0,T]$ for $T> 0$.
The system \eqref{Schrodinger} is said to be E-STLC around the ground state if there exists $s \in \mathbb{N}$ such that for every $T>0$, for every $\eta>0$, there exists $\delta> 0$ such that for every $\psi_f \in \mathcal{S} \cap H^s_{(0)}(0,1)$ with $\|\psi_f - \psi_1(T)\|_{H^s_{(0)}(0,1)} < \delta$,  there exists $u \in L^2((0,T),\R) \cap E_T$ with $\|u\|_{E_T} < \eta$ such that the solution $\psi$ of \eqref{Schrodinger} associated to the initial condition $\varphi_1$ at time $t=0$ and the control $u$ satisfies $\psi(T)=\psi_f$.  
\end{defi}
When the linearized system around an equilibrium is controllable, using a fixed-point theorem, one can hope to prove STLC for the nonlinear system as explained in \cite[Chap.\ 3.1]{bookC07} in finite dimension. 
When it is not the case, one needs to go further into the expansion in the spirit of \cite[Chap.\ 8]{bookC07}. 
For the Schrödinger equation, a few STLC results are already known. 

\
\paragraph{\textbf{Linear behavior.}} Since \cite{BL10}, it is known that when the coefficients $(\langle \mu \varphi_1, \varphi_j \rangle)_{j \in \N^*}$ satisfy
\begin{equation}
\label{hyp_mu_lin}
\text{there exists a constant } c>0
\text{ such that } 
\quad
\forall j \in \N^*, 
\quad 
\left|
\langle \mu \varphi_1, \varphi_j \rangle
\right|
\soe 
\frac{c}{j^3},
\end{equation}
then the Schrödinger equation is $H^k_0$-STLC around the ground state with targets in $H^{2k+3}_{(0)}$. This result has been extended in \cite{B21} where the author proved that when \eqref{hyp_mu_lin} holds only on a subset of $\N^*$,  STLC holds in projection with a unique control map for a finite range of regularity on the control. Moreover, it also has been proved in \cite{BM14} that, under the weaker assumption,  
\begin{equation*}
\mu'(1) \pm \mu'(0) \neq 0,
\end{equation*}
a finite number of coefficients $\langle \mu \varphi_1, \varphi_K \rangle$ vanish, but the local controllability with controls in $L^2$ holds in large time. 

\
\paragraph{\textbf{Quadratic behavior.}} In \cite{B21bis}, the author proved that when, for some $n \geq 2$ (resp.\ $n=1$)
\begin{equation*}
\langle \mu \varphi_1, \varphi_K \rangle =0,
\quad 
A^1_K=\cdots = A^{n-1}_K=0
\quad
\text{ and }
\quad
A^n_K \neq 0,
\end{equation*}
(with enough regularity on $\mu$ so that the associated series converge), the Schrödinger equation is not $H^{2n-3}$-STLC (resp.\ $W^{-1,\infty}$-STLC), due to a drift quantified by the $H^{-n}$-norm of the control. This follows the work of \cite{BM14} where the authors already denied $L^{\infty}$-STLC in the case $n=1$. Let us stress that such a result entails that, under  ($H_{\lin})$, \eqref{lin_nul} and ($H_{\Quad})$, the Schrödinger equation \eqref{Schrodinger} is not $H^3$-STLC. 

The goal of this paper is the proof of the following statement by taking advantage of the cubic term of the expansion. 
\begin{thm}
\label{the_theorem}
Let $\mu$ satisfying (H$_{\reg}$), (H$_{\lin}$), (H$_{\Quad}$) and (H$_{\Cub}$). Then, the Schrödinger equation \eqref{Schrodinger} is $H^2_0$-STLC around the ground state with targets in $H^{11}_{(0)}(0,1)$.
\end{thm}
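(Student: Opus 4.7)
The proof combines an order-3 asymptotic expansion of the end-point map with Sussmann-type control variations for which the cubic contribution dominates the quadratic drift along the lost direction $\varphi_K e^{-i\lambda_1 T}$, together with the STLC-in-projection result of \cite{B21} enhanced with simultaneous weak-norm bounds, and closed by a Brouwer fixed-point argument. The presence of (H$_{\reg}$) with the boundary conditions \eqref{mu_bc} is what will allow gain-integrations by parts and convergence of the series defining $A^p_K$ and $C_K$.

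First, I would develop, for $u\in H^2_0(0,T)$ small, the expansion $\psi(T)=\psi_1(T)+\Psi^{(1)}+\Psi^{(2)}+\Psi^{(3)}+R$, where $\Psi^{(k)}$ is the $k$-homogeneous term of the formal series in $u$. Projection on $\varphi_K e^{-i\lambda_1 T}$ kills the linear contribution by \eqref{lin_nul}. On the Duhamel representations of $\Psi^{(2)}$ and $\Psi^{(3)}$, successive integrations by parts in time, combined with the cancellations \eqref{quad_nul_1}--\eqref{quad_nul_2}, produce the rigorous counterpart of \eqref{eq:heuristic}:
\[
\langle\psi(T),\varphi_K e^{-i\lambda_1 T}\rangle = -iA^3_K\int_0^T u_3(t)^2\,dt + iC_K\int_0^T u_1(t)^2 u_2(t)\,dt + \rho(u),
\]
where $\rho(u)$ gathers boundary and remainder contributions bounded by cubic combinations of the weak norms $\|u\|_{H^{-k}}$ for $k\le 3$.

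Second, I would exploit the key asymmetry that $u_3$ carries one more primitive than $u_2$: by rescaling a fixed oscillating profile, one builds, for each sign $\sigma\in\{\pm 1\}$ and each small $\eta>0$, an auxiliary control $w^\sigma_\eta\in H^2_0(0,T)$ with $\|w^\sigma_\eta\|_{H^2_0}\lesssim\eta$ such that
\[
\sigma C_K\int_0^T (w^\sigma_\eta)_1^2(w^\sigma_\eta)_2\,dt \ge c\,\eta^{\alpha},\qquad \int_0^T (w^\sigma_\eta)_3^2\,dt = o(\eta^{\alpha}),
\]
for some $\alpha>0$. By the expansion of Step 1, such $w^\sigma_\eta$ drives a prescribed, signed motion of size $\asymp|C_K|\eta^\alpha$ along $\varphi_K e^{-i\lambda_1 T}$ that genuinely dominates the positive-definite quadratic drift $-iA^3_K\int(w^\sigma_\eta)_3^2$.

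Third, given a target $\psi_f\in\mathcal{S}\cap H^{11}_{(0)}$ close to $\psi_1(T)$, I would split its residual into the $\varphi_K e^{-i\lambda_1 T}$-coordinate $c_K$ and an orthogonal part, choose $\sigma$ so that the principal term $i\sigma C_K\eta^\alpha$ points in the direction of $c_K$, and, for each small $\eta$, apply the STLC-in-projection theorem of \cite{B21} to produce a corrector $v=v(\eta)\in H^2_0(0,T)$ making $\psi(T;w^\sigma_\eta+v)$ match $\psi_f$ exactly on every $\varphi_j$ with $j\ne K$. The crucial point is to reinvest here \emph{simultaneous} bounds on $\|v\|_{H^{-k}}$, $k\le 3$: fed back into Step 1, they control both the genuine cubic self-contribution of $v$ and all cross-terms with $w^\sigma_\eta$ at the level $o(\eta^\alpha)$, so the scalar map $F:\eta\mapsto\langle\psi(T;w^\sigma_\eta+v(\eta)),\varphi_K e^{-i\lambda_1 T}\rangle$ is continuous on a small interval and its principal part surjects onto a neighbourhood of $c_K$, at which point Brouwer's theorem supplies the required $\eta$. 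The hard part will be the compatibility of Steps 2 and 3: the profile $w^\sigma_\eta$ must be chosen so that the cubic integral dominates the positive quadratic one as $\eta\to 0$ \emph{and} so that the cross-terms with \emph{any} corrector $v$ provided by \cite{B21}, controlled only in weak norms, remain negligible along $\varphi_K e^{-i\lambda_1 T}$. This calls for delicate $H^{-k}$-exponent bookkeeping together with an upgrade of \cite{B21}'s in-projection result carrying explicit weak-norm estimates on the corrector, which should be the main new technical ingredient.
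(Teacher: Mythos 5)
Your overall strategy matches the paper's: oscillating controls for which the cubic term dominates the quadratic drift along $\varphi_K$, corrected by the STLC-in-projection result of \cite{B21} with simultaneous weak-norm estimates, and closed by Brouwer. But there is a genuine gap in your Step~3. Under (H$_{\lin}$) the lost direction is the \emph{complex} line spanned by $\varphi_K$, i.e.\ real codimension two, whereas your construction yields only a \emph{one}-real-parameter family of control variations. The principal term $i\sigma C_K\eta^\alpha$ is purely imaginary (indeed $C_K$ is real, being built from real scalar products of $\mu$ and the $\varphi_j$'s), so as $\sigma\in\{\pm1\}$ and $\eta>0$ vary it sweeps only the segment $\{\,it: t\in\mathbb{R}\,\}\cdot C_K$ in the $\psi_K(T)$-coordinate. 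The map $F:\eta\mapsto\langle\psi(T;\cdot),\varphi_K e^{-i\lambda_1T}\rangle$ goes from a real interval into $\mathbb{C}$ and cannot surject onto a two-dimensional neighbourhood of an arbitrary $c_K$; the Brouwer argument is posed in the wrong dimension.

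The paper repairs this by producing \emph{two} small-time $H^2_0$-continuously approximately reachable vectors, $i\varphi_K$ and $\varphi_K$ (\cref{prop:TV1} and \cref{prop:TV2}). The second does not come from a new oscillating profile: following the Hermes--Kawski mechanism \cite{HK87}, one concatenates two copies of the first variation, $u_\alpha \# 0_{[0,T]} \# u_\beta$, separated by an interval of free evolution. The free flow rotates the $\varphi_K$-coordinate by the nontrivial phase $e^{2i(\lambda_K-\lambda_1)T}$, so that $i\psi_K$ and $ie^{2i(\lambda_K-\lambda_1)T}\psi_K$ span the complex line over $\mathbb{R}$ for $T$ small and not a multiple of $\pi/(\lambda_K-\lambda_1)$. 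The Brouwer fixed-point is then run on a ball of a two-dimensional supplementary $\M$ (\cref{black_box}), with a $2$-parameter family of motions. You should therefore (a) add a construction of the second direction via a time-delayed concatenation, and (b) replace your scalar fixed-point step by one on $\M\cong\mathbb{R}^2$. As a minor point, the paper also uses non-overlapping controls (concatenation on disjoint time intervals) rather than $w^\sigma_\eta+v$: this disjointness is what lets the $H^{-k}$ non-overlapping estimates of \cref{sec:non_add} control the cross-terms along $\varphi_K$, and is worth making explicit in your Step~3.
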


Using `higher-order control variations' to prove STLC is classical for ODEs: it has been used for example to prove the sufficiency of Sussman $\S( \theta)$ condition \cite{S87} (see also \cite[Theorem 3.29]{bookC07}) or by Kawski in \cite[Theorem 3.7]{K90}. 
However, up to our knowledge, it is the first time that this strategy is used in infinite dimension.
The proof of \cref{the_theorem} can be brought down to the following ideas. 
\begin{itemize}
\item First, to recover STLC, it is enough to have STLC in projection on the reachable space of the linearized system and to move into the directions lost at the linear level using `higher-order control variations'. 
\item To move into the directions lost at the linear level, the strategy is the following.
\begin{itemize}
\item[$\triangleright$] First, one computes a well-quantified expansion of the solution along the lost directions to identify the leading terms. 
\item[$\triangleright$] Then, one proves that the cubic term can absorb the quadratic term along the lost directions using oscillating controls small in a good asymptotic. 
\item[$\triangleright$] Finally, one corrects the linear components using the STLC result in projection on the reachable space of the linearized system. Sharp estimates on the $H^{-k}$-norms of the control (see \eqref{def_norm_faible}) are needed to prove that such a correction didn't destroy the work done previously along the lost directions. Thus, the work done in \cite{B21}  is a key tool for this paper.  
\end{itemize}
\end{itemize}

The paper is organized as follows. 
First, a systematic approach to recover STLC when the linearized system misses a finite number of directions is described in \cref{sec:black_box} and presented on a few toy-models in finite dimension in \cref{sec:toy_models}. 
Before applying this method to the Schrödinger equation, in \cref{sec:WP_Schro}, we recall its well-posedness and the controllability result in projection of \cite{B21}. 
Then, the power series expansion of the Schrödinger equation is computed in \cref{sec:expansion}. 
Finally, \cref{sec:STLC_result} is dedicated to the proof of \cref{the_theorem}. 

\subsection{State of the art}

\
\paragraph{\textbf{Controllability results on the Schrödinger equation.}}

\medskip 
\subparagraph{\textit{Local exact controllability results.}}
Bilinear control systems have been considered as non controllable for a long time because of a negative result   \cite{BMS82} from Ball, Marsden and Slemrod, later adapted for the Schrödinger equation by Turinici in \cite{T00}. 
The result in \cite{BMS82} was later completed by Boussaïd, Caponigro and Chambrion in \cite{BCC20}.


However, these statements don't give obstructions for the Schrödinger equation to be controllable in different functional spaces. Later, exact local controllability results for 1D models have been proved by Beauchard in \cite{B05, B08}, later improved by Beauchard and Laurent in \cite{BL10} and generalized later in \cite{B21}. In \cite{BL10}, the authors also proved the local controllability of a nonlinear Schrödinger equation with Neumann boundary conditions. The case of Dirichlet boundary conditions has been treated later by Duca and Nersesyan in \cite{DN22}. 

Morancey and Nersesyan also proved the controllability of one Schrödinger equation with a polarizability term \cite{MN14} and of a finite number of equations with one control \cite{M14, MN15}. In dimension $N \leq 3$, Puel \cite{P16} also proved the local exact controllability of a Schrödinger equation, in a bounded regular domain in a neighborhood of an eigenfunction corresponding to a simple eigenvalue, for controls $u=u(t,x)$. Using the link between quantum and classical dynamics, in \cite{BBS21}, the authors also gave some necessary and sufficient conditions of the local controllability of the Schrödinger equation. 

\medskip \noindent \textit{Global results.} 
Using Galerkin approximations, global approximate controllability results have been proved by Boscain, Boussaïd, Caponigro, Chambrion, Mason and Sigalotti in \cite{BCCS12, BCS14, BCC13, BCC20, CMSB09}. The genericity of these sufficient conditions is stated in \cite{CS16}. This strategy has also been used to prove exact controllability in projection on the first eigenstates in \cite{CS18}. Adiabatic arguments \cite{BCMS12, BGRS15, DJT20, DJT22} or Lyapunov techniques \cite{M09, N10} can also be used.  Also, in \cite{DN21}, the authors proved the approximate controllability of a nonlinear Schrödinger equation with bilinear controls.
%

Nersesyan and Nersisyan also proved the global exact controllability in infinite time of one Schrödinger equation in one \cite{NN12} and any dimension \cite{NN12bis}. Later, Duca provided explicit times such that the global exact controllability holds in \cite{D19}. Global exact controllability in projection of infinite bilinear Schrödinger equations has also been proved in \cite{D20}.

\
\paragraph{\textbf{Local controllability result by the power series expansion.}}
When the linearized system is not controllable, the strategy, presented in \cite[Chap. 8]{C07} for finite-dimensional control system, of performing a power series expansion of the solution, can be used to prove both negative or positive controllability results. 

\medskip 
\subparagraph{\textit{Negative results.}}
 In \cite{BM18}, the authors proved that, in finite dimension, for scalar-input differential systems, when the linear test fails, the second-order term adds a drift quantified by the $H^{-n}$-norm of the control, along an explicit Lie bracket, denying $W^{2n-3, \infty}$-STLC for the nonlinear system. 
Such a phenomenon was already observed in infinite dimension, for a Schrödinger equation, by Coron in \cite{C06}, by Beauchard and Morancey in \cite{BM14} and later in \cite{B21bis}. 
In these works, using the second-order term, and more precisely proving a coercivity inequality involving an integer negative Sobolev norm of the control, the authors gave impossible motions in small time. 
For a Burgers equation, STLC is still denied in \cite{M15} proving a coercivity inequality but involving a fractional Sobolev norm of the control instead. 
In \cite{BM20}, obstructions caused by both quadratic integer and fractional drifts are proven on a scalar-input  parabolic equation. 
A similar result has also been proved on a KdV system, for boundary controls in \cite{CKN20} by Coron, Koenig and Nguyen. The authors also showed in \cite{CNK22} that the STLC of a water tank modeled by 1D Saint-Venant equations doesn't hold when the time is not large enough, proving a coercivity property for the quadratic term of the system.

\medskip
\subparagraph{\textit{Positive results.}}
The power series expansion method has been used in infinite dimension to recover STLC for the first time in \cite{CC04} for a KdV control system. Beforehand, in \cite{R97}, Rosier studied the controllability of the Kdv equation posed on a finite interval $(0,L)$ with Dirichlet boundary conditions and the control acting on the Neumann data at the right end-point of the interval. The author proved that when $L$ belongs to a set of critical values, the linearized system around the origin is not controllable due to the existence of a finite-dimension subspace of unreachable states.
For some critical values such that the space of unreachable states is of dimension 1, Coron and Crépeau in \cite{CC04} recovered local controllability in small time using a power series expansion of the solution of order 3 for which there is no quadratic term. 
In \cite{C07}, the same approach is used by Cerpa to treat another critical length. However, in this case, the unreachable set of directions at the linear level is of dimension 2 and a second order expansion is sufficient to recover local controllability, but the result holds only in large time. 
Later, Cerpa and Crépeau proved in \cite{CC09} the local controllability in large time for any critical lengths with the same strategy.

Such a method has also already been used for the Schrödinger equation. In \cite{BC06} and \cite{B08}, the controllability of a quantum particle in a 1D infinite square potential well with variable length is studied. In both cases, the proof relies on a compactness argument that needs local controllability results around many periodic trajectories. Those local results are proved by the linear test or using second order terms for some trajectories for which the linearized system looses one direction. 
In \cite{BM14}, local controllability with controls in $L^2$ in large time has also been proved using an expansion of order 2. 

Moreover, for the first time in \cite{BM20}, on a scalar-input parabolic equation, the power series method has been used to recover at the quadratic order an infinite number of direction lost at the linear level.

Finally, let us quickly mention that stabilization results have also been proved using the power series expansion method as in \cite{CE19, CR17, CRX17}.

\section{Method of `control variations'}
\label{sec:black_box}
Under (H$_{\lin}$), the linearized system around the ground state of the Schrödinger equation \eqref{Schrodinger} is not controllable: it misses one complex direction $\varphi_K$. This situation is called `controllability up to (real) codimension two'. 
The goal of this section is to propose a systematic approach to deal with these situations, which is different from the one for ODEs and better adapted to PDEs. For finite dimensional systems, the classical approach used by Kawski \cite[Theorem 2.4]{K90} consists in,
\begin{itemize}
\item proving that any direction lost at the linear level is a `tangent vector' thanks to higher order control variations,
\item and deducing the STLC of the nonlinear system thanks to a time-iterative process that uses arbitrary small-time intervals.
\end{itemize}
For PDEs, using arbitrary small-time intervals is not comfortable, because of the control-cost explosion when the time goes to zero. Therefore, in our new approach, Kawski's time-iteration process is replaced by a Brouwer fixed point argument. This is why our new notion of `tangent vector' contains a continuity property.

\subsection{Main result}
To encompass finite and infinite dimensional systems, STLC is discussed in terms of the surjectivity of the end-point map. Let us state first the functional setting. 
Let $X$ be a Banach space over $\R$. 
Let $(E_T, \|\cdot\|_{E_T})$ be a family of normed vector spaces of functions defined on $[0,T]$ for $T> 0$. 
Assume that for all $T_1, T_2>0$, for all $u \in E_{T_1}$ and $v \in E_{T_2}$, the concatenation of the two functions $u \# v$ defined by
\begin{equation}
\label{concatenation}
u \# v 
:= 
u 
\1_{(0,T_1)}
+
v( \cdot - T_1)
\1_{(T_1,T_1+T_2)}
\end{equation} 
is in $E_{T_1+T_2}$ with moreover the following estimate:
\begin{equation}
\label{continuity_ET}
\| 
u 
\#
v
\|_{E_{T_1+T_2}}
\ioe 
\| u\|_{E_{T_1}}
+
\| v\|_{E_{T_2}}.
\end{equation}
For example, for any positive integer $k$, the Sobolev space $H^k_0(0,T)$ satisfies this property whereas the Sobolev space $H^k(0,T)$ doesn't. 
Finally, let $(\F_T)_{T > 0}$ be a family of functions from $X \times E_T$ to $X$ for $T>0$. Later, in all our applications, $\F_T$ will denote the end-point of the control system. 

%
\smallskip \noindent 
First, let us make precise the new definition of `tangent vector' used in this paper. 
\begin{defi}
\label{def_new_tv} 
A vector $\xi \in X$ is called a small-time $E$-continuously approximately reachable vector if there exists a continuous map $\Xi : [0, +\infty)\rightarrow X$ with $\Xi(0)=\xi$ such that for all $T > 0$, there exists $C, \rho, s >0$ and a continuous map $b \in (-\rho, \rho) \mapsto u_b \in E_T$ such that, 
\begin{equation}
\label{new_tv}
\forall b \in (-\rho, \rho), 
\quad 
\left\| 
\F_{T}
\left(
0,
u_b
\right) 
- 
b 
\Xi(T)
\right\|_{X}
\ioe 
C
|
b
|^{1+s}
\quad
\text{ with }
\quad
\| 
u_b
\|_{E_T}
\ioe 
C
|
b
|^{s}.
\end{equation}
The family $(u_b)_{b \in \R}$ (resp.\ the map $\Xi$) is called the control variations (resp.\ the vector variations) associated with $\xi$.
\end{defi}

\begin{rem}
Let us stress that, for finite-dimensional system, in \cite{K90}, Kawski (see also the work of Frankowska \cite{F87, F89}) introduced rather the following definition: a vector $\xi$ is said to be an $m$-th order tangent vector if there exists a family of controls $(u_T)_{T>0}$ such that 
\begin{equation}
\label{methodo:tv_bis}
\F_T( 0, u_T)= T^m \xi + o(T^m) 
\quad
\text{ when } 
T \rightarrow 0. 
\end{equation}
Our \cref{def_new_tv} is different: the final time and the amplitude of the target are unrelated. This allows constants in \eqref{new_tv} badly quantified with respect to the final time $T$, which is not possible in \eqref{methodo:tv_bis}. Hence, being a small-time $E$-continuously approximately reachable vector is a weaker property than being a tangent vector. Also, for this reason, the dependency of the constants with respect to the final time will not be tracked in this paper. 
For the Schrödinger equation \eqref{Schrodinger}, the lost directions at the linear level are approximately reachable vectors, but it seems more complicated to prove that they are tangent vectors. 
\end{rem}

Our systematic approach is formalized in the following statement. 
\begin{thm}
\label{black_box}
Assume the following hypotheses hold.
\begin{itemize}
\item[$(A_1)$] For all $T>0$, $\F_T: X \times E_T \rightarrow X$ is of class $C^2$ on a neighborhood of $(0,0)$ with $\F_T(0,0)=0$. 
\item [$(A_2)$] For all $x \in X$, $T \in \R_+ \mapsto d \F_T(0,0).(x,0) \in X$ can be continuously extended at zero with $d \F_0(0,0).(x,0)=x$. 
\item [$(A_3)$] For all $T_1, T_2 > 0$, for all $x \in X$, for all $u \in E_{T_1}$ and $v \in E_{T_2}$, 
\begin{equation}
\label{translation_F}
\F_{T_1+T_2}
\left(
x,
u \# v
\right)
=
\F_{T_2}
\left(
\F_{T_1}
\left(
x,
u
\right)
,
v
\right)
.
\end{equation}
\item [$(A_4)$]  The space $\H:=\Ran d\F_T(0,0).(0, \cdot)$ doesn't depend on time, is closed and of finite codimension $n$.

\item  [$(A_5)$] There exists $\M$ a supplementary of $\H$ that admits a basis $(\xi_i)_{i=1, \ldots,n }$  of small-time $E$-continuously approximately reachable vectors.
\end{itemize}
Then, for all $T>0$, $\F_T$ is locally onto from zero: for all $\eta>0$, there exists $\delta>0$ such that for all $x_f \in  X$ with $\| x_f\|_X < \delta$, there exists $u \in E_T$ with $\| u \|_{E_T} < \eta$ such that 
\begin{equation*}
\F_T(0,u)=x_f.
\end{equation*}
\end{thm}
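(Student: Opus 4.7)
The plan is to split $[0,T]$ into $n+1$ subintervals, applying on the first $n$ (each of length $T_*>0$ small, to be chosen) the control variations $u^{(i)}_{b_i}$ associated by $(A_5)$ and \cref{def_new_tv} to each basis vector $\xi_i$ of $\M$, and reserving the last subinterval (of length $T-nT_*$) for a linear correction driving the $\H$-component of the state to its target. By $(A_4)$ and the open mapping theorem I will fix a bounded linear right inverse $L : \H \to E_{T-nT_*}$ of $d\F_{T-nT_*}(0,0)(0,\cdot)$. Writing the target as $x_f = m^* + h^*$ with $m^* = \sum_i b_i^*\xi_i \in \M$ and $h^* \in \H$, and denoting by $\pi_\H, \pi_\M$ the continuous projections associated to the splitting $X = \H \oplus \M$, my ansatz, for $(b,h)\in\R^n\times\H$ small, will be the concatenated control
\[
u(b,h) := u^{(1)}_{b_1} \# \cdots \# u^{(n)}_{b_n} \# L(h) \in E_T.
\]

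The next step is to Taylor-expand $\F_T(0,u(b,h))$ around $(b,h)=0$, combining the $C^2$-regularity $(A_1)$, the concatenation identity $(A_3)$, and the estimates from \cref{def_new_tv}. A key observation is that the zero-control flow is a semigroup on $X$ by $(A_3)$, so its linearization $S_t := d_x\F_t(0,0)(\cdot,0)$ satisfies $S_{t+s} = S_t\circ S_s$; furthermore the $T$-independence of $\H$ in $(A_4)$ forces $S_t \H \subset \H$, so that $S_t$ induces a bounded endomorphism $\bar S_t$ of the finite-dimensional space $\M$. Iterating $(A_3)$ along the $n$ variations and the final correction should yield
\[
\F_T(0, u(b,h)) \;=\; \sum_{j=1}^n b_j\, S_{T-jT_*}\Xi_j(T_*) \;+\; h \;+\; R(b,h),
\]
where $R$ is continuous in $(b,h)$ with $\|R(b,h)\|_X = o(|b| + \|h\|_X)$ as $(b,h)\to 0$.

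The equation $\F_T(0,u(b,h)) = x_f$ then reduces to a fixed-point problem. Projecting on $\H$ gives $h = h^* - \sum_j b_j\,\pi_\H(S_{T-jT_*}\Xi_j(T_*)) - \pi_\H R(b,h)$, which for fixed small $b$ is a contraction in $h \in \H$ and defines a continuous map $h = h(b)$. Substituting into the $\M$-projection reduces the problem to a finite-dimensional equation $Nb = b^* + \tilde r(b)$ in $\R^n$, where the matrix $N$ has columns equal to the coordinates of $\pi_\M(S_{T-jT_*}\Xi_j(T_*))$ in the basis $(\xi_i)$. Since $\Xi_j$ is continuous at $0$ with $\Xi_j(0)=\xi_j$, choosing $T_*$ small makes $\pi_\M(S_{T-jT_*}\Xi_j(T_*))$ close to $\bar S_T \xi_j$; provided $\bar S_T|_\M$ is invertible (which holds in the Schrödinger application, where the free flow is unitary on $\M$), $N$ is invertible for $T_*$ small and Brouwer's theorem on a small ball of $\R^n$ will deliver the desired $b$, hence the control $u$.

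The hardest step will be producing the expansion above with a remainder $R$ sharp enough to close the Brouwer argument. Since $\|u^{(i)}_{b_i}\|_{E_{T_*}} = O(|b|^{s_i})$ with $s_i>0$, iterating $(A_3)$ and the $C^2$ Taylor formula at each subinterval generates many cross-terms between the variations themselves and between each variation and $L(h)$; showing that all of these assemble into a single $o(|b| + \|h\|_X)$ remainder demands careful bookkeeping of the $E_{T_*}$- and $X$-norms across the $n+1$ subintervals. This step will rely crucially on the strict positivity of the exponents $s_i$ and on the continuity of $b \mapsto u^{(i)}_b$ built into \cref{def_new_tv}, the latter being exactly what makes a Brouwer argument (in contrast with the Kawski time-iteration used in finite dimension) applicable in the infinite-dimensional setting.
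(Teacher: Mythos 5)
Your overall strategy follows the paper's: concatenate the $n$ control variations on small subintervals, append a linear correction, and close with a fixed-point argument. But there is a genuine gap in your final step. You need the matrix $N$ --- equivalently, the map $\pi_\M \circ S_T|_\M$ where $S_t := d\F_t(0,0).(\cdot,0)$ --- to be invertible, and you explicitly flag this as an \emph{extra} assumption (``provided $\bar S_T|_\M$ is invertible, which holds in the Schrödinger application''). That invertibility is not among $(A_1)$--$(A_5)$: nothing in the hypotheses prevents some nonzero $m_0 \in \M$ from satisfying $S_T m_0 \in \H$ for a particular $T$, in which case the free flow on $[T_*,T]$ wipes out the $\M$-component produced by the $i$-th variation, $N$ becomes singular, and the Brouwer argument collapses. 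As written, your proof establishes a weaker theorem with a supplementary hypothesis.

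The paper closes exactly this gap by first reducing to small final time $T$ (see \cref{rem:temps_petit}): for $T > T^*$ one applies zero control on $[0, T-T^*]$ using $(A_3)$ and $\F_t(0,0)=0$, and solves on the remaining interval of length $T^* < T^*$. Once $T$ is small, the vectors $S_{T-jT_*}\Xi_j(T_*)$ are close to $\xi_j$ \emph{uniformly as both $T\to0$ and $T_*\to0$}, because $\Xi_j$ is continuous at $0$ with $\Xi_j(0)=\xi_j$ and $S_0=\Id_X$ by $(A_2)$. Hence $N$ is close to the identity matrix and invertible without any hypothesis on $\bar S_T$. The paper packages this as the statement (in \cref{prop:motion_M}) that $\M_T := \Span\left(d\F_{T-T_j}(0,0).(\Xi_j(T_j-T_{j-1}),0)\right)$ is a supplementary of $\H$ for $T < T^*$, and then works with $\M_T$ rather than your fixed $\M$. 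Your argument would close if you added the small-$T$ reduction as a first step; as it stands, it does not prove the stated theorem.

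A secondary and non-fatal difference: you solve for $h$ and $b$ simultaneously via a combined contraction/Brouwer fixed point, whereas the paper first applies the inverse mapping theorem to obtain an exact control map $\Gamma_T$ onto the $\H$-projection (\cref{lem:linear_test}) and reserves Brouwer for the finite-dimensional $\M$-component only. Both can be made to work; the paper's sequential decomposition keeps the remainder bookkeeping cleaner because the $\H$-equation is solved exactly rather than iteratively, and it also cleanly isolates where the continuity of $b\mapsto u_b$ (the new ingredient in \cref{def_new_tv}) is actually used.
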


\begin{rem}
\label{rem:time_revers}
If in addition of $(A_1)-(A_5)$, we assume that 
\begin{itemize}
\item [$(A_6)$] for all $T > 0$ and $u \in E_{T}$, $u(T- \cdot)$ is in $E_{T}$ with
\begin{equation*}
\F_T( \F_T(0,u), u(T- \cdot)) = 0,
\end{equation*}
\end{itemize}
then, for all $T>0$, $\F_T$ is locally onto: for all $\eta>0$, there exists $\delta>0$ such that for all $(x_0,x_f) \in  X^2$ with $\| x_0\|_X+\| x_f\|_X < \delta$, there exists $u \in E_T$ with $\| u \|_{E_T} < \eta$ such that 
\begin{equation*}
\F_T(x_0,u)=x_f.
\end{equation*}
Indeed, let $(x_0,x_f) \in  X^2$ with $\| x_0\|_X+\| x_f\|_X < \delta$. By \cref{black_box}, there exists $u, v \in E_T$ such that $\F_T(0,u)=x_0$ and $\F_T(0,v)=x_f$. 
Then, using successively $(A_3)$ and $(A_6)$, one has 
\begin{equation*}
\F_{2T}( x_0, u(T-\cdot) \# v)
=
\F_T( \F_T(x_0, u(T-\cdot)), v)
=
\F_T( 0, v)
=
x_f. 
\end{equation*}
\end{rem}

\begin{rem}
The $C^2$-regularity of $\F_T$ in $(A_1)$ is for convenience. In the proof of \cref{black_box}, one needs the following estimates: for all $T>0$ and $R>0$, there exists $C>0$ such that for all $(x,u) \in X \times E_T$ with $\| x\|_X+\| u \|_{E_T} <R$, 
\begin{align}
\label{Gronwall_F}
\left\|
\F_{T}(x,u)
-
\F_{T}(0,u)
-
\F_{T}(x,0)
\right\|_{X}
&\ioe
C
\| x\|_X 
\| u \|_{E_T},
\\
\label{dev_diff_F}
\left\|
\F_{T}(x,u)
-
d \F_T(0,0).(x,u)
\right\|_X
&\ioe
C
\left(
\| x\|_X^2
+
\| u\|_{E_T}^2
\right)
.
\end{align}
Both estimates follow from Taylor formulas when $\F_T$ is of class $C^2$.
\end{rem}

\begin{rem}
When $\F_T$ denotes the end-point map of a control system, 
\begin{itemize}
\item $(A_1)$ is linked to the well-posedness of the system: for controls in $E_T$ and initial data in $X$, the end-point of the solution must take values in $X$;
\item $(A_2)$ asks that the solutions of the linearized system are continuous with respect to time; 
\item $(A_3)$ is related to the semigroup property of the equation;
\item $(A_4)$ means that the linearized system is `controllable up to finite codimension';
\item  $(A_5)$ means that the directions lost at the linear level can be recovered using `higher order control variations';
\item and $(A_6)$ is linked to the time reversibility of the equation. 
\end{itemize}
\end{rem}

\subsection{Proof of \cref{black_box}}
The first tool is the local surjectivity of the nonlinear map $\F_T$ up to finite codimension. 
\begin{prop}
\label{lem:linear_test}
Assume $(A_1)$ and $(A_3)$. 
Let $T>0$ and $\mathcal{N}$ a supplementary of $\H$. Denote by $\P$ the projection on $\H$ parallely to $\mathcal{N}$.
Then, $\F_T$ is locally onto in projection on $\H$:  there exists $\delta_0, C>0$ and a $C^1$-map $\Gamma_{T} : B_{X}(0, \delta_0) \times \left( B_{X}(0, \delta_0) \cap \H \right) \rightarrow E_T$ with $\Gamma_{T}( 0,0)=0$ such that for all $(x_0, x_f) \in B_{X}(0, \delta_0) \times \left( B_{X}(0, \delta_0) \cap \H \right)$, 
\begin{equation}
\label{target_lin}
\P
[
\F_{T}
\left(
x_0,
\Gamma_{T}(x_0,x_f) 
\right)
] 
= x_f,
\end{equation}
with the size estimate 
\begin{equation}
\label{estim_contr_lin}
\| 
\Gamma_{T}(x_0, x_f) 
\|_{E_T}
\ioe
C
\left(
\| x_0 \|_{X}
+
\| x_f \|_{X}
\right).
\end{equation}
\end{prop}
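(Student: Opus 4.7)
The plan is to reduce the statement to an application of the Banach space implicit function theorem on the finite-codimensional subspace $\H$, after constructing a bounded linear right inverse of the surjective linearization at the origin.

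First, I would set
\[
L := \P \circ d \F_T(0,0).(0, \cdot) : E_T \longrightarrow \H,
\]
which by $(A_4)$ is continuous and surjective onto the Banach space $\H$ (closed in $X$). The next step is to produce a bounded linear right inverse $S : \H \to E_T$ of $L$. Picking any closed topological supplement $F$ of $\ker L$ in $E_T$, the restriction $L|_F : F \to \H$ is a continuous linear bijection between Banach spaces, hence a topological isomorphism by the open mapping theorem, so that $S := (L|_F)^{-1}$ is a well-defined bounded operator satisfying $L \circ S = \Id_{\H}$.

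Next, I would introduce the reduced map
\[
\Theta : X \times \H \longrightarrow \H, \qquad \Theta(x_0, y) := \P \bigl[ \F_T(x_0, S(y)) \bigr].
\]
By $(A_1)$, $\Theta$ is of class $C^2$ near the origin with $\Theta(0,0) = 0$, and its partial differential with respect to $y$ at $(0,0)$ is $L \circ S = \Id_{\H}$, a topological isomorphism of $\H$. The classical implicit function theorem in Banach spaces then produces $\delta_0 > 0$ and a $C^1$ map
\[
Y : B_X(0, \delta_0) \times \bigl( B_X(0, \delta_0) \cap \H \bigr) \longrightarrow \H
\]
with $Y(0, 0) = 0$ solving $\Theta(x_0, Y(x_0, x_f)) = x_f$. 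Setting $\Gamma_T(x_0, x_f) := S \circ Y(x_0, x_f)$ then produces the desired $C^1$ map satisfying the target identity, and the size estimate follows from the continuity of $S$ combined with the Lipschitz bound on $Y$ near the origin provided by the implicit function theorem (together with $Y(0,0)=0$, which controls $\|Y(x_0,x_f)\|_X$ by $\|x_0\|_X + \|x_f\|_X$ up to a constant).

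The only non-routine point is the existence of a closed topological supplement of $\ker L$ in $E_T$, which is needed in order to build $S$. In the concrete Hilbertian setting targeted by this paper (namely $E_T = H^2_0(0,T)$ for the application to the Schrödinger equation), this is automatic since the orthogonal complement of $\ker L$ provides such a supplement. The rest of the argument is entirely standard, and I expect the proof to be short once this right inverse has been exhibited.
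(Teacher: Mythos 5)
Your argument is essentially the paper's own (very terse) proof, which simply invokes the inverse mapping theorem applied to the augmented map $(x,u) \mapsto (x, \P[\F_T(x,u)])$ of \eqref{end_point_proj}. Since that map's differential at $(0,0)$ is merely surjective onto $X \times \H$ and not bijective, the paper is in fact appealing to a local surjection theorem, whose standard proof is precisely your reduction: construct a bounded right inverse $S$ of $L = \P \circ d\F_T(0,0).(0,\cdot)$ by restricting $L$ to a closed topological complement of $\ker L$, then apply the implicit function theorem to the reduced map $\Theta(x_0,y) = \P[\F_T(x_0, S(y))]$ and set $\Gamma_T = S \circ Y$. You are somewhat more careful than the paper in explicitly flagging the one hypothesis that is not automatic in a general normed $E_T$ --- that $\ker L$ admit a closed topological complement (and, implicitly, that the open mapping theorem apply to $L|_F$, which requires completeness) --- and in noting that this is free in the Hilbertian case $E_T = H^2_0(0,T)$ relevant to the application.
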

\noindent 
The proof follows from applying the inverse mapping theorem to the $C^1$-map 
\begin{equation}
\label{end_point_proj}
\begin{array}{lccl}
 & X \times E_T & \to & X \times \H \\
 & (x, \ u) & \mapsto & \left( x, \P[ \F_{T}(x, u) ] \right).  \\
\end{array}
\end{equation}

Then, 
we prove that every direction spanned by approximately reachable vectors can be recovered using higher order control variations. 
\begin{prop}
\label{prop:motion_M}
Under the assumptions of \cref{black_box}, there exists $T^*>0$ such that for all $T \in (0, T^*)$ and $\eta>0$, there exists 
$\M_T$ a supplementary of $\H$, $C, s, \rho>0$ and a continuous map 
$
z \in \M_T \cap B_X(0, \rho) 
\mapsto
u_z \in E_T
$
%
%
such that for all $z \in \M_T \cap B_X(0, \rho)$, 
\begin{equation}
\label{motion_M}
\left\|
\F_{T}
\left(
0,u_z
\right) 
-  
 z 
\right\|_{X}
\ioe 
C 
\| z \|_X^{1+s}
\quad
\text{ with }
\quad 
\|u_z\|_{E_T} \ioe \eta.
\end{equation}
\end{prop}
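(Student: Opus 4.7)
The plan is to exploit each of the $n$ approximately reachable vectors $\xi_i$ on a sub-interval of length $T/n$, and then concatenate the resulting control variations. Set $S_t := d\F_t(0,0).(\cdot, 0)$; by $(A_2)$ and the Banach--Steinhaus theorem, $\{S_t\}_{t \in [0,\delta]}$ is uniformly bounded in operator norm with $S_0 = \Id_X$, and differentiating \eqref{translation_F} with respect to the state variable yields the semigroup identity $S_{t_1 + t_2} = S_{t_2} \circ S_{t_1}$. For each $i \in \{1, \ldots, n\}$, I apply $(A_5)$ to $\xi_i$ with final time $T/n$ to obtain exponents $s_i > 0$ and continuous maps $b \mapsto u^i_b \in E_{T/n}$ satisfying $\|u^i_b\|_{E_{T/n}} \ioe C |b|^{s_i}$ and $\|\F_{T/n}(0, u^i_b) - b\, \Xi_i(T/n)\|_X \ioe C |b|^{1 + s_i}$. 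I then define $\eta_i^T := S_{(n-i)T/n}(\Xi_i(T/n))$; since $\Xi_i(0) = \xi_i$ and $S_0 = \Id_X$, one has $\eta_i^T \to \xi_i$ as $T \to 0^+$, so because $\H$ is closed of codimension $n$ there exists $T^* > 0$ such that for every $T \in (0, T^*)$ the family $(\eta_i^T)_{i=1}^n$ is linearly independent and $\M_T := \Span(\eta_1^T, \ldots, \eta_n^T)$ is a supplementary of $\H$.

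Given $z \in \M_T$, the decomposition $z = \sum_{i=1}^n b_i(z) \eta_i^T$ defines a continuous linear coordinate map with $|b_i(z)| \ioe C \|z\|_X$. I set
\begin{equation*}
u_z := u^1_{b_1(z)} \# u^2_{b_2(z)} \# \cdots \# u^n_{b_n(z)},
\end{equation*}
which belongs to $E_T$ by iterated application of \eqref{continuity_ET}. Taking $s := \min(s_1, \ldots, s_n, 1) \in (0, 1]$, for $\|z\|_X$ small enough one has $|b_i(z)|^{s_i} \ioe |b_i(z)|^s$, so $\|u_z\|_{E_T} \ioe \sum_i \|u^i_{b_i(z)}\|_{E_{T/n}} \ioe C \|z\|_X^s$, and picking $\rho > 0$ small yields $\|u_z\|_{E_T} \ioe \eta$. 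Continuity of $z \mapsto u_z$ follows from that of the coordinate map $z \mapsto b(z)$, of each $b \mapsto u^i_b$, and from the Lipschitz character of concatenation implied by \eqref{continuity_ET}.

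To prove the approximation part of \eqref{motion_M}, set $x_0 := 0$ and $x_k := \F_{T/n}(x_{k-1}, u^k_{b_k(z)})$ for $k \in \{1, \ldots, n\}$, so that $\F_T(0, u_z) = x_n$ by iterated application of \eqref{translation_F}. I will prove by induction on $k$ that
\begin{equation*}
x_k = \sum_{i=1}^k b_i(z)\, S_{(k-i)T/n}(\Xi_i(T/n)) + R_k, \qquad \|R_k\|_X \ioe C \|z\|_X^{1+s}.
\end{equation*}
The case $k = 1$ is the defining property of $u^1_{b_1}$. For $k \geq 2$, \eqref{Gronwall_F} and \eqref{dev_diff_F} give
\begin{equation*}
x_k = \F_{T/n}(0, u^k_{b_k(z)}) + S_{T/n}(x_{k-1}) + O\bigl(\|x_{k-1}\|_X \|u^k_{b_k(z)}\|_{E_{T/n}}\bigr) + O\bigl(\|x_{k-1}\|_X^2\bigr),
\end{equation*}
and one substitutes the induction hypothesis together with the semigroup identity $S_{T/n} \circ S_{(k-1-i)T/n} = S_{(k-i)T/n}$. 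At $k = n$, the leading term equals $\sum_i b_i(z) \eta_i^T = z$, which yields \eqref{motion_M}. The main technical obstacle is the inductive bookkeeping: one has to check that the cross term $\|x_{k-1}\|_X \|u^k_{b_k(z)}\|_{E_{T/n}} = O(\|z\|_X^{1 + s_k})$ and the quadratic term $O(\|x_{k-1}\|_X^2) = O(\|z\|_X^2)$ both fit inside the target remainder $O(\|z\|_X^{1+s})$, which is exactly what forces the choice $s = \min(s_1, \ldots, s_n, 1)$ rather than a sharper exponent.
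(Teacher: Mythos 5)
Your proposal is correct and follows essentially the same strategy as the paper's proof: concatenating the control variations over a subdivision of $[0,T]$, establishing an inductive estimate on the partial endpoints via \eqref{Gronwall_F}, \eqref{dev_diff_F} and the semigroup identity for $d\F_t(0,0).(\cdot,0)$, and using that the propagated vectors $\eta_i^T$ converge to the $\xi_i$ to get a supplementary $\M_T$ for small $T$. The only cosmetic differences are your choice of equal subintervals $T/n$ rather than an arbitrary subdivision, and your explicit appeal to Banach--Steinhaus and to $s := \min(s_1,\dots,s_n,1)$, both of which the paper leaves implicit.
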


\begin{proof}
Let $T >0$ and $\eta >0$. Let $0=T_0 < \cdots < T_n=T$ be a subdivision of $[0, T]$.
By $(A_5)$, there exists $C, \rho, s>0$ and for all $i=1, \ldots,n$, two continuous maps $\Xi_i : [0,+\infty) \rightarrow X$ with $\Xi_i(0)=\xi_i$ and $b \in (-\rho, \rho) \mapsto u_b^i \in E_{T_{i}-T_{i-1}}$ such that for all $b \in (-\rho, \rho)$, 
\begin{equation}
\label{methodo:tv_eq1}
\left\| 
\F_{T_i -T_{i-1}}
\left(
0,
u^{i}_b
\right)
- 
b \Xi_i(T_i -T_{i-1})  
\right\|_{X}
\ioe 
C
|
b
|^{1+s}
\
\text{ with }
\
\| 
u^{i}_b
\|_{E_{T_i -T_{i-1}}}
\ioe 
C
|
b
|^{s}.
\end{equation}
For all $c=(c_1, \ldots, c_n) \in \R^n$ with $\|c\| < \rho$ and $k \in \{1, \ldots, n\}$, we define
\begin{equation*}
u^{\#^k}_c
:=
u^{1}_{
c_1
}
\#
u^{2}_{
c_2
}
\#
\ldots
\#
u^{k}_{
c_k
}
\in 
E_{T_k}
.
\end{equation*}
We prove by induction on $k \in \{1, \ldots, n\}$  the existence of $C>0$ such that for all $\|c\|_X < \rho$, 
\begin{equation}
\label{rec:hyp_k}
\left\|
\G_{T_k} 
(
0,
u^{\#^k}_c
)
-  
\sum_{i=1}^k 
c_i
d \G_{T_k-T_i}(0,0).
(
\Xi_i(T_i -T_{i-1})  
, 0)
\right\|_{X}
\ioe 
C 
\| c \|^{1+s}.
\end{equation}
The initialization with $k=1$ follows from the definition of the family $(u^1_b)_{b \in \R}$ in \eqref{methodo:tv_eq1} and that $d \G_0(0,0).( \cdot, 0)=\Id_X$ by $(A_2)$. Let us prove the heredity: assume that \eqref{rec:hyp_k} holds for some $k \in \{1, \ldots, n\}$. 
First, by $(A_3)$, one has, 
\begin{equation*}
\G_{T_{k+1}}
(
0,
u^{\#^{k+1}}_c
)
=
\G_{T_{k+1}-T_k}
\left(
\G_{T_k}
(
0,
u^{\#^{k}}_c
)
,
u^{k+1}_{
c_{k+1}
}
\right).
\end{equation*} 
Thus, together with the inequality \eqref{Gronwall_F}, one gets, 
\begin{multline}
\label{rec1}
\left\|
\G_{T_{k+1}}
(
0,
u^{\#^{k+1}}_c
)
-
\G_{T_{k+1}-T_k}
(
0
,
u^{k+1}_{
c_{k+1}
}
)
-
\G_{T_{k+1}-T_k}
(
\G_{T_k}
(
0,
u^{\#^{k}}_c
)
,
0
)
\right\|_X
\\
\ioe 
C
\| 
\G_{T_k}
(
0,
u^{\#^{k}}_c
)
\|_X
\| 
u^{k+1}_{
c_{k+1}
}
\|_{E_{T_{k+1}-T_k}}
\ioe
C
\| c\|
| c_{k+1}|^s,
\end{multline}
thanks to \eqref{rec:hyp_k} and the size estimate in \eqref{methodo:tv_eq1}. 
Moreover, by \cref{def_new_tv}, 
\begin{equation}
\label{rec_2}
\| 
\G_{T_{k+1} - T_{k}}
(
0,
u^{k+1}_{
c_{k+1}
}
)
- 
c_{k+1} 
\Xi_{k+1}(T_{k+1} -T_{k})  
\|_{X}
\ioe 
C
|
c_{k+1}
|^{1+s}.
\end{equation}
Besides, using the Taylor expansion \eqref{dev_diff_F}, one gets, 
\begin{multline}
\label{rec3}
\left\|
\G_{T_{k+1}-T_k}
(
\G_{T_k}
(
0,
u^{\#^{k}}_c
)
,
0
)
-
d \G_{T_{k+1}-T_k}(0,0).
(
\G_{T_k}
(
0,
u^{\#^{k}}_c
)
, 
0
)
\right\|_X
\\
\ioe 
C
\|
\G_{T_k}
(
0,
u^{\#^{k}}_c
)
\|_X^2
\ioe
C \| c\|^2.
\end{multline}
Moreover, using the induction hypothesis \eqref{rec:hyp_k}, one has
\begin{multline}
\label{rec4}
\Big\|
d \G_{T_{k+1}-T_k}(0,0). 
(
\G_{T_k}
(
0,
u^{\#^{k}}_c
)
,
0
)
\\-
\sum \limits_{i=1}^k
c_i 
d \G_{T_{k+1}-T_k}(0,0). 
\left(
d \G_{T_k-T_i}(0,0).
( 
\Xi_i(T_i -T_{i-1})  
, 0)
, 
0
\right)
\Big\|_X
\ioe
C \|c\|^{1+s}.
\end{multline}
Besides, differentiating \eqref{translation_F}, one gets that for all $T_1, T_2 > 0$ and $x \in X$, 
\begin{equation*}
d\F_{T_1}(0,0).
(
d\F_{T_2}(0,0).(x,0)
,
0
)
=
d \F_{T_1+T_2}(0,0).(x,0)
.
\end{equation*}
Thus, \eqref{rec3} and \eqref{rec4} lead to 
\begin{equation}
\label{rec5}
\left\|
\F_{T_{k+1}-T_k}
(
\G_{T_k}
(
0,
u^{\#^{k}}_c
)
,
0
)
-
\sum \limits_{i=1}^k
c_i 
d \G_{T_{k+1}-T_i}(0,0). 
\left(
\Xi_i(T_i -T_{i-1})  
, 
0
\right)
\right\|_X
\ioe 
C
\| c\|^{1+s}
.
\end{equation}
Then, estimates \eqref{rec1}, \eqref{rec_2} and \eqref{rec5} lead to \eqref{rec:hyp_k} for $k+1$ and this concludes the induction.

\smallskip \noindent \emph{Conclusion.} 
The following map is continuous and doesn't vanish at zero,
\begin{equation*}
(t_1, \ldots, t_n, \hat{t}_1, \ldots, \hat{t}_n)
\mapsto
\det\left(
\P_{\M}[d\F_{t_1}(0,0).(\Xi_1(\hat{t}_1), 0)]
, 
\ldots, 
\P_{\M}[d\F_{t_n}(0,0).(\Xi_n(\hat{t}_n), 0)]
\right),
\end{equation*}
where $\P_{\M}:=\Id - \P$ denotes the projection on $\M$ defined in $(A5)$ parallely to $\H$. 
Thus, there exists $T^*>0$ such that for all $T \in [0, T^*)$, $(\P_{\M}[d\F_{T-T_i}(0,0).(\Xi_i(T_i-T_{i-1}), 0)])_{i=1, \ldots, n}$ is a basis of $\M$. As $\M$ is a supplementary of $\H$, one deduces that $\M_T$ defined as
\begin{equation*}
\M_T
:=
\Span( 
d \F_{T-T_i}(0,0). 
\left(
\Xi_i(T_i -T_{i-1})
,
0
\right)
,
\
i=1, \ldots, n
),
\end{equation*}
is also a supplementary of $\H$. Thus, by  \eqref{rec:hyp_k}, the proof is concluded with 
\begin{equation*}
u_z:=u^{\#^n}_{c_1, \ldots, c_n}
\quad 
\text{ for all }
\quad
z
=
\sum \limits_{i=1}^n 
c_i
d \F_{T-T_i}(0,0). 
\left(
\Xi_i(T_i -T_{i-1})
,
0
\right)
\in 
\M_T 
.
\end{equation*}
Notice that the continuity of the map $z \mapsto u_z$ stems from the ones of $b \mapsto u^i_b$. 
\end{proof}

Finally, one can prove \cref{black_box}: the `higher order control variations' constructed in \cref{prop:motion_M} and the local surjectivity of $\F_T$ up to finite codimension given in \cref{lem:linear_test} are enough to gain back the controllability lost at the linear level. 

\begin{rem}
\label{rem:temps_petit}
It is enough to prove the conclusion of \cref{black_box} for sufficiently small final times $T \in (0, T^*)$. Indeed, for $T>T^*$, taking $T_i>0$ such that $T-T_i < T^*$, one has
\begin{equation*}
\forall u \in E_{T-T_i},
\quad
\F_T( 0, 0_{[0,T_i]} \# u)
=\F_{T-T_i}( \F_{T_i}(0,0), u)
=\F_{T-T_i}(0,u),
\end{equation*}
using $(A_1)$ and $(A_3)$. Thus, the result in small time entails the result in large time. 
\end{rem}

\begin{proof}[Proof of \cref{black_box}.]
Let $T>0$ the final time, $T_1 \in (0,T)$ an intermediate time and $\eta >0$ the accuracy on the control. Define $\delta:= \min( \delta_0, \frac{\eta}{4C})$ where $\delta_0$ (resp.\ $C$) is defined by \cref{lem:linear_test} (resp.\ \eqref{estim_contr_lin}). Let $x_f$ in $X$ with $\| x_f \|_X < \delta$.

\medskip \noindent \emph{Step 1: Steering $0$ almost to $x_f$.} 
Let $\M_{T_1}$ the supplementary of $\H$ given in \cref{prop:motion_M}. 
The goal of this step is to construct a $n$-parameters family $(v_z)_{z \in \M_{T_1}}$ such that, for every $z \in \M_{T_1}$ small enough, one has
\begin{align}
\label{goal_H}
&\P \G_T( 0, v_z) = \P x_f 
,
\\
\label{goal_M}
&\left\| 
\P_{T_1} \G_T( 0, v_z)- z  
\right\|_X
\ioe 
C
\|z\|_X^{1+\gamma}
+
C 
\| \P x_f\|_X^2
,
\quad 
\text{ with } \gamma >0,
\\
\label{goal_vz}
&
\| v_z\|_{E_T} \ioe \eta
,
\end{align}
where $\P_{T_1}:=\Id- \P$ denotes the projection on $\M_{T_1}$ parallely to $\H$. 
By \cref{prop:motion_M}, there exists $C, \rho, s>0$ and a continuous map $\tild{z} \mapsto u_{\tild{z}}$ from $\M_{T_1} \cap B_X(0, \rho)$ to $E_{T_1}$ such that, 
\begin{equation}
\label{contr_step1}
\forall \tild{z} \in \M_{T_1} \cap B_X(0, \rho), 
\quad
\left\|
\G_{T_1}
\left(
0,u_{\tild{z}}
\right) 
-  
\tild{z}
\right\|_{X}
\ioe 
C 
\| \tild{z} \|_X^{1+s}
\quad
\text{ with }
\quad 
\|u_{\tild{z}}\|_{E_{T_1}} \ioe \frac{\eta}{2}.
\end{equation}
Denote by $(e_i^{T_1})_{i=1, \ldots,n}$ a basis of $\M_{T_1}$. 
Then, the following map is continuous and non-vanishing at zero,
\begin{equation*}
t
\mapsto 
\det
\left( 
\P_{T_1}[
d \F_{t}(0,0).( e_1^{T_1}, 0)
]
,
\ldots
, 
\P_{T_1}[
d \F_{t}(0,0).( e_n^{T_1}, 0)
]
\right).
\end{equation*}
Thus, for $T$ small enough, $\P_{T_1}[d \G_{T-T_1}(0,0).( \cdot, 0)]$ is invertible from $\M_{T_1}$ to $\M_{T_1}$ with a continuous inverse by the open mapping principle. Hence, there exists a linear continuous map $h$ from $\M_{T_1}$ to $\M_{T_1}$ such that, 
\begin{equation}
\label{def_h}
\forall z \in \M_{T_1},
\quad 
\P_{T_1}[
d \G_{T-T_1}(0,0).(h(z), 0)
]
=z.
\end{equation}
Finally, for all $z \in \M_{T_1} \cap B_X(0, \rho)$, we define
\begin{equation}
\label{def_vz}
v_z 
:= 
u_{h(z)}
\#
\Gamma_{T-T_1}
\left(
\G_{T_1}
\left(
0,u_{h(z)}
\right) 
,
\P x_f
\right),
\end{equation}
where $\Gamma_{T-T_1} : B_X(0, \delta_0) \times \left( B_X(0, \delta_0) \cap \H \right)$ is constructed in \cref{lem:linear_test} with the supplementary $\M_{T_1}$ and the family$(u_{\tild{z}})_{\tild{z} \in \M_{T_1}}$ is constructed in \eqref{contr_step1}. 
As 
$
\G_{T_1}
\left(
0,u_{h(z)}
\right) 
\rightarrow 0
$
when $z$ goes to 0, for $\rho$ small enough, 
$
\left\|
\G_{T_1}
\left(
0,u_{h(z)}
\right) 
\right\|_X
< \delta_0
$
.

\smallskip \noindent \emph{Size estimate.}
By \eqref{continuity_ET}, for all $z \in \M_{T_1} \cap B_X(0, \rho)$, $v_z$ is in $E_{T}$ with 
\begin{align*}
\| v_z \|_{E_T} 
&\ioe
\| u_{h(z)} \|_{E_{T_1}}
+
\left\|
\Gamma_{T-T_1}
\left(
\G_{T_1}
\left(
0,u_{h(z)}
\right) 
,
\P x_f
\right)
\right\|_{E_{T-T_1}} 
\\
&\ioe
\frac{\eta}{2}
+
C
\left(
\| 
\G_{T_1}
\left(
0,u_{h(z)}
\right) 
\|_X
+
\|
\P
x_f
\|_X
\right)
\ioe 
\frac{\eta}{2}
+ 
2 C\delta
\ioe
\eta,
\end{align*}
using the size estimate \eqref{contr_step1} on $u_{h(z)}$ and the estimate \eqref{estim_contr_lin} on $\Gamma_{T-T_1}$. This proves \eqref{goal_vz}.

\smallskip \noindent \emph{Target almost reached.}
Moreover, using $(A_3)$, one has, 
\begin{equation}
\label{translation}
\G_T (0, v_z)
=
\G_{T-T_1}
\left(
\G_{T_1}(0, u_{h(z)})
,
\Gamma_{T-T_1}
\left(
\G_{T_1}
\left(
0,u_{h(z)}
\right) 
,
\P x_f
\right)
\right).
\end{equation}
Therefore, by definition \eqref{target_lin} of $\Gamma_{T-T_1}$, \eqref{goal_H} is already satisfied. To prove \eqref{goal_M}, one can use \eqref{translation} together with the inequality \eqref{Gronwall_F} to get 
\begin{multline}
\label{big_estim}
\left\|
\P_{T_1} \G_T(0, v_z)
-
z
\right\|_X
\ioe 
\left\|
\P_{T_1}
\left[
\G_{T-T_1}
\left(
0
,
\Gamma_{T-T_1}
\left(
\G_{T_1}
\left(
0,u_{h(z)}
\right) 
,
\P x_f
\right)
\right)
\right]
\right\|_X
\\
+
\left\|
\P_{T_1}
\left[
\G_{T-T_1}
\left(
\G_{T_1}(0, u_{h(z)}),
0
\right)
-z
\right]
\right\|_X
\\
+ 
C
\|
\Gamma_{T-T_1}
\left(
\G_{T_1}
\left(
0,u_{h(z)}
\right),
\P x_f
\right) 
\|_{E_{T-T_1}}
\|
\G_{T_1}
\left(
0,u_{h(z)}
\right) 
\|_X.
\end{multline}
Yet, using the estimates \eqref{estim_contr_lin} on $\Gamma_{T-T_1}$, \eqref{contr_step1} on $\G_{T_1}(0, u_{h(z)})$ and the continuity of $h$, the last term of the right-hand side of \eqref{big_estim} is estimated by 
$
C
\|
z
\|^2
+
C
\| \P x_f\|^2
.$
Using the Taylor expansion \eqref{dev_diff_F}, the second term of the right-hand side of \eqref{big_estim} is estimated by 
\begin{equation} 
\label{big_estim_1} 
\|
\P_{T_1}
[
d \G_{T-T_1}(0,0). 
\left(
\G_{T_1}(0, u_{h(z)}),
0
\right)
-
z
]
\|_X
+
\| 
\G_{T_1}(0, u_{h(z)})
\|_X^2
\ioe 
C 
\| z\|^{1+\min(1,s)}
,
\end{equation}
using estimate \eqref{contr_step1}, the construction \eqref{def_h} and the continuity of $h$. 
Moreover, by definition of $\H$ in $(A_4)$,  
$
\P_{T_1}
\left[
d \G_{T-T_1}(0,0). 
(
0, 
\Gamma_{T-T_1}
(\G_{T_1}(0, u_z) 
, 
\P x_f
)
)
\right]
=0
$.
Thus, using again the Taylor expansion \eqref{dev_diff_F}, the first term of the right-hand side of \eqref{big_estim}
is estimated by 
\begin{equation}
\label{big_estim_2}
C
\left\|
\Gamma_{T-T_1}
(\G_{T_1}(0, u_z) 
, 
\P x_f
)
\right\|_X^2
\ioe 
C
\left(
\| z\|^2
+
\| \P x_f\|^2
\right),
\end{equation}
using estimate \eqref{estim_contr_lin} on $\Gamma_{T-T_1}$ and \eqref{contr_step1}. Therefore, \eqref{big_estim}, \eqref{big_estim_1} and \eqref{big_estim_2} lead to \eqref{goal_M}.

\medskip \noindent \emph{Step 2: Steering $0$ to $x_f$.} Thanks to \eqref{goal_H} and \eqref{goal_vz}, to conclude the proof, it remains to prove the existence of $z \in \M_{T_1} \cap B_X(0, \rho)$ such that $\P_{T_1} \G_{T}(0, v_z)= \P_{T_1} x_f$. To that end, we apply the Brouwer fixed-point theorem to the function
\begin{displaymath}
G_{x_f}:
\left|
  \begin{array}{rcl}
    \M_{T_1} \cap B_X(0, \rho) & \longrightarrow & \M_{T_1} \\
    z & \longmapsto & z -\P_{T_1}[\G_T(0, v_z)] +\P_{T_1}[x_f]. \\
  \end{array}
\right.
\end{displaymath}
First, notice that by continuity of $\G_{T}$, of $\Gamma_{T-T_1}$, of $h$ and of $\tild{z} \mapsto u_{\tild{z}}$, the map $z \mapsto v_z$ defined in \eqref{def_vz} is continuous from $\M_{T_1}$ to $E_T$. Thus, $G_{x_f}$ is continuous.
It remains to prove that it stabilizes a ball. Let $\rho' \in (0, \rho)$ such that $C \rho'^{ \gamma} < \frac{1}{2}$ and reduce $\delta$ such that $C \delta^2 + \delta < \frac{\rho'}{2}$ where $C$ is given in \eqref{goal_M}. Then, using estimate \eqref{goal_M}, one has for all $z \in \M_{T_1} \cap B_X(0, \rho')$, 
\begin{equation*}
\| G_{x_f}(z) \|_X
\ioe 
C
\| z \|_X^{1+\gamma}
+
C\| \P x_f\|_X^2
+
\| \P x_f\|_X
\ioe 
C \rho'^{1+\gamma} 
+
C \delta^2 + \delta
\ioe \rho'.
\end{equation*}
Thus, one can apply Brouwer fixed-point theorem to $G_{x_f}$ to conclude the proof. 
\end{proof}

\section{Toy-models in finite dimension}
\label{sec:toy_models}
The goal of this section is to illustrate the method presented in \cref{sec:black_box} on examples in finite dimension. 
For the sake of simplicity, we only explain how to prove that the directions lost at the linear level are small-time $E$-continuously approximately reachable vectors.
 The verification of the other assumptions of \cref{black_box} is left to the reader.
 In this section, for all $n \in \N^*$, $(e_i)_{i=1, \ldots, n}$ denotes the canonical basis of $\R^n$.

\subsection{A first toy-model: not all lost directions can be recovered}
Consider the following control-affine polynomial system
 \begin{equation}
 \label{toy_model_1}
  \left\{
    \begin{array}{ll}
        \dot{x}_1= u, \\
        \dot{x}_2=  x_1^2 +x_1^3.
    \end{array}
\right.
\end{equation}
The reachable space of the linearized system around $(0,0)$ is given by $\H=\Span(e_1)$ and its supplementary by $\M=\Span(e_2)$. However, $e_2$ is not a small-time $W^{-1, \infty}$-continuously approximately reachable vector because the following quantity
\begin{equation*}
x_2(T; \ u, \ 0)
= \int_0^T u_1(t)^2 dt + \int_0^T u_1(t)^3 dt
\geq (1- T \| u \|_{W^{-1, \infty}}) \int_0^T u_1^2(t) dt
\end{equation*}
is positive for $T$ and $\| u \|_{W^{-1, \infty}}$ small enough. This system illustrates Sussman's necessary condition \cite{S83} on 
$
\left[
[f_0, f_1]
, 
f_1
\right](0)
$
for $L^{\infty}$-STLC.

\subsection{Sussman example: a quadratic/cubic competition}
The following classical example illustrates that a cubic term can be used to dominate a quadratic drift and restore STLC,
  \begin{equation}
  \label{toy_model_2}
  \left\{
    \begin{array}{ll}
        \dot{x}_1= u, \\
        \dot{x}_2=  x_1,\\
        \dot{x}_3= x_2^2+x_1^3.
    \end{array}
\right.
\end{equation}
For this system, $\H=\Span(e_1, e_2)$ and $\M=\Span(e_3)$. 

First,  \eqref{toy_model_2} is not $W^{1, \infty}$-STLC. Indeed, considering a trajectory such that $x_1(T)=x_2(T)=0$, two integrations by parts give, 
\begin{equation*}
\int_0^T u_1(t)^3 dt 
=
- 2\int_0^T u_2(t) u_1(t) u(t) dt
=
\int_0^T u_2(t)^2 u'(t) dt. 
\end{equation*}
Thus, provided that $\| u \|_{W^{1, \infty}(0,T)} \leq \frac{1}{2}$,
\begin{equation*}
x_3(T; \ u, \ 0) 
= \int_0^T u_2(t)^2 ( 1+ u'(t)) dt
\geq 
\frac{1}{2}
\int_0^T u_2(t)^2 dt.
\end{equation*}
Hence, it is impossible to reach states of the form $(0,0, -\delta)$ with $\delta >0$. 

However, $e_3$ is a small-time $L^{\infty}$-continuously approximately reachable vector. Indeed, in this asymptotic, one can use the cubic term to absorb the quadratic term along the lost direction using oscillating controls defined for $b \in \R^*$ by 
\begin{equation*}
\forall t \in [0,T], 
\
u_b(t)
:=
\sign(b)
|b|^{\frac{1}{11}}
\phi''
\left(
\frac{
t
}
{
|b|^{\frac{2}{11}}
}
\right)
\quad
\text { with }
\
\phi \in C_c^{\infty}(0,1)
\text{ s.\ t.\ }
\int_0^1 \phi'(\theta)^3 d\theta
=
1.
\end{equation*}
Indeed, performing the change of variables $t= |b|^{\frac{2}{11}} \theta$, one gets 
\begin{align*}
x_3(T; \ u_b, \ 0)
&=
\int_0^{|b|^{\frac{2}{11}}}
\left(
\sign(b)
|b|^{\frac{5}{11}}
\phi
\left(
\frac{
t
}
{
|b|^{\frac{2}{11}}
}
\right)
\right)^2
dt
+
\int_0^{|b|^{\frac{2}{11}}}
\left(
\sign(b)
|b|^{\frac{3}{11}}
\phi'
\left(
\frac{
t
}
{
|b|^{\frac{2}{11}}
}
\right)
\right)^3
dt
\\
&= 
|b|^{\frac{12}{11}}
\int_0^1 \phi(\theta)^2 d\theta
+
\sign(b)
|b|
\int_0^1 \phi'(\theta)^3 d\theta
=
b + \O( |b|^{\frac{12}{11}}).
\end{align*}
Moreover, along the `linear components', as $u_b$ is supported on $(0, |b|^{\frac{2}{11}}) \subset (0,T)$ for $b$ small enough, one directly has  
\begin{equation*}
\left(
x_1(T; \ u_b, \ 0),
\
x_2(T; \ u_b, \ 0)
\right)
=
\left(
u_1(T), 
u_2(T)
\right)
=
(0,0)
.
\end{equation*}
Besides, one has the following estimates on the controls, 
\begin{equation*}
\forall k \in \N, 
\quad
\| 
u_b^{(k)}
\|_{L^{\infty}(0,T)}
\ioe 
\| \phi^{(2+k)} \|_{L^{\infty}(0,1)} 
|b|^{\frac{1-2k}{11}}. 
\end{equation*}
Hence, this family of controls is arbitrary small in $L^{\infty}(0,T)$ (but not in $W^{1, \infty}(0,T)$). Moreover, this estimate (with $k=0$) also gives that the map $b \mapsto u_b$ from $\R^*$ to $L^{\infty}(0,T)$ can be extended continuously at zero with $u_0=0$. 
Therefore, $e_3$ is a small-time $L^{\infty}$-continuously approximately reachable vector and by \cref{black_box}, \eqref{toy_model_2} is $L^{\infty}$-STLC. Notice that this was already known thanks to the Susmann $\S(\theta)$ condition \cite{S83}.

\subsection{A polynomial toy-model for the Schrödinger PDE}
The next polynomial control system is designed to be a toy-model for the Schrödinger equation \eqref{Schrodinger} as explained later in \cref{rem:toy_model_poly},
  \begin{equation}
  \label{toy_model_3}
  \left\{
    \begin{array}{ll}
        \dot{x}_1= u, \\
        \dot{x}_2=  x_1,\\
        \dot{x}_3= x_2, \\
        \dot{x}_4 = x_3^2 +  x_1^2 x_2 , \\
        \dot{x}_5 = x_4. 
    \end{array}
\right.
\end{equation}
For this example, $\H= \Span( e_1, e_2, e_3)$ and $\M=\Span(e_4, e_5)$. Moreover, solving explicitly \eqref{toy_model_3}, the fourth and fifth components are given by, 
\begin{align}
\label{tm3_com4}
x_4(T; \ u , \ 0)
&=
\int_0^T u_3(t)^2 dt 
+
\int_0^T u_1(t)^2 u_2(t) dt, 
\\
\label{tm3_com5}
x_5(T; \ u , \ 0)
&=
\int_0^T (T-t) u_3(t)^2 dt
+
\int_0^T (T-t) u_1(t)^2 u_2(t) dt 
.
\end{align}
First, using Cauchy-Schwarz and Gagliardo-Nirenberg inequalities \cite{N59}, one gets the existence of $C>0$ such that for all $u \in H^3(0,T)$
\begin{equation*}
\left| 
\int_0^T u_1(t)^2 u_2(t) dt
\right|
\ioe 
C
\| u_1\|_{L^2(0,T)}^3
\ioe 
C \left( 
\|u^{(3)}\|_{L^2(0,T)} + T^{-3} \|u\|_{L^2(0,T)} \right)
\|u_3\|^2_{L^2(0,T)}. 
\end{equation*}
Thus, the quadratic term prevails on the cubic term in \eqref{tm3_com4} and \eqref{tm3_com5} when controls are small in $H^3$. This allows to deny $H^3$-STLC for \eqref{toy_model_3}. Nonetheless, let us prove that \eqref{toy_model_3} is $H^2_0$-STLC. 

\smallskip \noindent \emph{Step 1: $e_4$ is a small-time $H^2_0$-continuously approximately reachable vector with vector variations $\Xi_4(T)=e_4+Te_5$.}  Heuristically, for a final time $T$ fixed, looking at \eqref{tm3_com4} and \eqref{tm3_com5}, the cubic terms of $x_4$ and $x_5$ have the same size. Thus, it seems better to use the vector variations $\Xi_4(T)=e_4+Te_5$ instead of $\Xi_4(T)=e_4$. 

As before, in the asymptotic of controls small in $H^2_0$, one can use the cubic term to absorb the quadratic term along the lost direction using oscillating controls of the form, for all $b \in \R^*$, 
\begin{equation}
\label{control_variations}
u_b(t)
=
\sign(b)
|b|^{\frac{7}{41}}
\phi^{(3)}
\left(
\frac{
t
}
{
|b|^{\frac{4}{41}}
}
\right)
\quad
\text { with }
\
\phi \in C_c^{\infty}(0,1)
\text{ s.\ t.\ }
\int_0^1 \phi''(\theta)^2 \phi'(\theta) d\theta
=
1.
\end{equation}
Indeed, substituting these controls into \eqref{tm3_com4} and \eqref{tm3_com5} and performing the change of variables $t= |b|^{\frac{4}{41}} \theta$, one gets 
\begin{align*}
x_4(T; \ u_b, \ 0)
&=
|b|^{
\frac{42}{41}
}
\int_0^1 \phi(\theta)^2 d \theta
+
b
,
\\
x_5(T; \ u_b, \ 0)
&=
|b|^{
\frac{42}{41}
}
\int_0^1 
(T- 
|b|^{
\frac{4}{41}
}
\theta 
) \phi(\theta)^2 d \theta
+
T b 
-
\sign(b)
|b|^{
\frac{45}{41}
}
\int_0^1
\theta
\phi''(\theta)^2
\phi'(\theta)
d\theta
.
\end{align*}
Moreover, as $u_b$ is supported on $(0, |b|^{\frac{4}{41}}) \subset (0,T)$ for $b$ small enough, one directly has,
\begin{equation*}
(
x_1(T; \ u_b, \ 0)
,
\
x_2(T; \ u_b, \ 0)
,
\
x_3(T; \ u_b, \ 0)
)
=
(
u_1(T),
u_2(T), 
u_3(T)
)
=
(0, 0,0).
\end{equation*}
Besides, for all $b \in \R^*$, 
\begin{align}
\label{size_ub_H2}
\| 
u_b'' 
\|_{L^2(0,T)}
&\ioe 
\| \phi^{(5)} \|_{L^2(0,1)}
|b|^{\frac{1}{41}}.
\end{align}
Thus, the family $(u_b)_{b \in \R}$ is arbitrary small in $H^2_0(0,T)$ and the map $b \mapsto u_b$ from $\R^*$ to $H^2_0(0,T)$ can be continuously extended at zero with $u_0=0$. This concludes Step 1.

\smallskip \noindent \emph{Step 2: Constructing the second approximately reachable vector from the first one.}
Hermes and Kawski proved in \cite[Theorem 6]{HK87} that for affine-control systems of the form $\dot{x}=f_0(x)+uf_1(x)$, if for some Lie bracket $V$ of $f_0$ and $f_1$, $V(0)$ is a tangent vector in the sense of \eqref{methodo:tv_bis}, then $[f_0, V](0)$ is also a tangent vector. 

Using the same construction, we prove that $e_5$ is also a small-time $H^2_0$-continuously approximately reachable vector with vector variations $\Xi_5(T)=e_5$. Denote by $(u_b)_{b \in \R}$ the control variations associated with $e_4$, constructed at step 1. We are going to prove that, for all $(b, c) \in \R^2$ small enough, 
\begin{equation}
\label{tm3_goal}
x(3T; \ u_b \# 0_{[0,T]} \# u_c, \ 0) 
=
(b+c)(e_4 +Te_5)
+
2Tb e_5
+
\O
( \| (b,c)\|^{1+\frac{1}{41}}).
\end{equation}
Thus, taking for all $\alpha \in \R$, $b = \frac{\alpha}{2T}$ and $c=-b$, this proves the existence of a family of controls $(v_{\alpha})_{\alpha \in \R}$ such that, when $\alpha$ goes to zero, 
\begin{equation*}
x(3T; \ v_{\alpha}, \ 0)
=
\alpha e_5
+
\O
(| \alpha|^{1+\frac{1}{41}})
\quad 
\text{ with }
\quad 
\| v_{\alpha}\|_{H^2_0(0,T)} 
\ioe 
C
| \alpha|^{\frac{1}{41}},
\end{equation*}
using \eqref{size_ub_H2}. And, this will conclude Step 2. To prove \eqref{tm3_goal}, notice first that by definition of $(u_b)_{b \in \R}$, one has
\begin{equation}
\label{first_step}
x(T; \ u_b, \ 0)
=
b(e_4+Te_5)
+
\O
(
|b|^{1+\frac{1}{41}}
).
\end{equation}
Then, using the semi-group property of \eqref{toy_model_3}, one has
\begin{equation}
\label{second_step}
x(2T; \ u_b \# 0_{[0,T]}, \ 0)
=
x(T; \ 0_{[0,T]}, \ x(T; \ u_b, \ 0)). 
\end{equation}
Moreover, computing explicitly the solution, one gets a constant $C>0$ such that, 
\begin{equation}
\label{evolution_libre}
\forall p \in \R^5,
\quad
\left\| 
x(T; \ 0_{[0,T]}, \ p)
-
p
-
T p_4 e_5 
\right\|
\ioe 
C
\| p \|^2.
\end{equation}
Thus, \eqref{first_step}, \eqref{second_step} and \eqref{evolution_libre} lead to
\begin{equation}
\label{step2T}
x(2T; \ u_b \# 0_{[0,T]}, \ 0) 
= 
b(e_4+T e_5) 
+ 
Tb e_5 
+ 
\O( |b|^{1+\frac{1}{41}}).
\end{equation}
Then, once again, using the semi-group property, 
\begin{equation}
\label{third_step}
x(3T; \ u_b \# 0_{[0,T]} \# u_c, \ 0) 
=
x(T; \ u_c, \ x(2T; \ u_b \# 0_{[0,T]}, \ 0) ). 
\end{equation}
Moreover, using Grownall Lemma, one gets a constant $C>0$ such that for all $\| u \|_{H^2_0(0,T)} < 1$ and $\| p\| < 1$,
\begin{equation}
\label{Gronwall_dim_finie}
\left\|
x(T; \ u, \ p)
- 
x(T; \ 0_{[0,T]}, \ p)
-
x(T; \ u, \ 0)
\right\|
\ioe 
C \| u \|_{H^2_0(0,T)} \| p\|.
\end{equation}
Thus, \eqref{third_step} and \eqref{Gronwall_dim_finie} lead to 
\begin{multline}
\label{step4}
\|
x(3T; \ u_b \# 0_{[0,T]} \# u_c, \ 0) 
-
x(T; \ u_c, \ 0)
-
x(T; \ 0_{[0,T]}, \ x(2T; \ u_b \# 0_{[0,T]}, \ 0))
\|
\\
\ioe
C
\| u_c\|_{H^2_0}
\|x(2T; \ u_b \# 0_{[0,T]}, \ 0) \|
\ioe
C |c|^{\frac{1}{41}} |b|,
\end{multline}
using the size estimates  \eqref{size_ub_H2} on $(u_c)_{c \in \R}$ and \eqref{step2T} on $x(2T; \ u_b \# 0_{[0,T]}, \ 0)$. Besides, using the estimates \eqref{evolution_libre} and \eqref{step2T},
\begin{equation}
\label{step5}
x(T; \ 0_{[0,T]}, \ x(2T; \ u_b \# 0, \ 0)) 
=
b(e_4+Te_5)+ 2Tbe_5 + \O( | b|^{1+\frac{1}{41}}).
\end{equation}
Using the definition of the family $(u_c)_{c \in \R}$, \eqref{step4} and \eqref{step5} lead to \eqref{tm3_goal}. 

\begin{rem}
\label{rem:toy_model_poly}
The system \eqref{toy_model_3} can be seen as a polynomial toy-model for the Schrödinger PDE \eqref{Schrodinger} in the following way. For both control systems, the linearized system is controllable `up to codimension 2', the leading quadratic (resp.\ cubic) term of the solution along the first lost direction is given by  $\int_0^T u_3(t)^2 dt$ (resp.\ by $\int_0^T u_1(t)^2 u_2(t)dt$) and the second lost direction is more or less the `integration' of the first one. 
\end{rem}

\subsection{A bilinear toy-model for Schrödinger}
Let $p \in \N^*$ and $H_0, H_1 \in \mathcal{M}_p(\mathbb{R})$ symmetric matrices. Consider Schrödinger control systems of the form
\begin{equation}
\label{ODE}
i X'(t) = H_0 X(t) -u(t) H_1X(t),
\end{equation}
where the state is $X(t) \in \mathbb{C}^p$ and the control is $u(t) \in \R$. We write $(\varphi_1, \ldots, \varphi_p)$ for an orthonormal basis of eigenvectors of $H_0$ and $(\lambda_1, \ldots, \lambda_p)$ for its eigenvalues. We also denote by $X_{j}(t):= \varphi_j e^{-i \lambda_j t}$ for all $j \in \{1, \ldots, p\}$. In this section, the commutator of  $H_0$ and $H_1$ is denoted by $[H_0, H_1]:=H_0 H_1 - H_1 H_0$ and $\langle \cdot, \cdot \rangle$ denotes the classical hermitian scalar product on $\C^p$. 

\begin{rem}
\label{adaptation}
For Schrödinger ODEs \eqref{ODE}, we work around the trajectory $(X_1, u \equiv 0)$. The work in \cref{sec:black_box} can still be used by performing the change of function $X^*(t):=X(t)e^{i \lambda_1t} -\varphi_1$ to work around $(0,0)$. 
Thus, in this setting, a vector $\xi \in \R^p$ is called a small-time $E$-continuously approximately reachable vector if there exists a continuous map $\Xi : [0, +\infty)\rightarrow \R^p$ with $\Xi(0)=\xi$ such that for all $T > 0$, there exists $C, \rho, s >0$ and a continuous map $b \in (-\rho, \rho) \mapsto u_b \in E_T$ such that for all $b \in (-\rho, \rho),$
\begin{equation*}
\left\| 
X(T; \ u_b, \ \varphi_1)
- 
X_1(T)
-
b 
\Xi(T)
\right\|
\ioe 
C
|
b
|^{1+s}
\quad
\text{ with }
\quad
\| 
u_b
\|_{E_T}
\ioe 
C
|
b
|^{s}.
\end{equation*}
The topology on the state is not specified as all norms are equivalent in finite dimension. 
\end{rem}

\subsubsection{The linear test} 
The linearized system of \eqref{ODE} around the trajectory $(X_1, u \equiv 0)$ is given by
\begin{equation}
\label{ODE_lin}
i  X'_L = H_0 X_L - u(t) H_1 X_1. 
\end{equation}
By the Duhamel formula, the solution of \eqref{ODE_lin} with $X_L(0)=0$ can be written as 
\begin{equation}
\label{dim_finie_lin}
X_L(T) 
=
i 
\sum \limits_{j=1}^p
\left(
\langle H_1 \varphi_1, \varphi_j \rangle 
\int_0^T 
u(t) 
e^{i (\lambda_j - \lambda_1)t}
dt
\right)
X_j(T)
.
\end{equation}
Thus, the reachable space of the linearized system \eqref{ODE_lin} is given by 
\begin{equation*}
\H
:=
\Span_{\C}
\left(
X_j(T)
\quad
\text{ for } 
j \in \{1, \ldots, p \}
\text{ such that }
\quad
\langle H_1 \varphi_1, \varphi_j \rangle 
\neq 0
\right),
\end{equation*}
as the equality $X_L(T)=X_f$ is brought down to solving a finite polynomial moment problem when the coefficients 
$
\langle H_1 \varphi_1, \varphi_j \rangle 
$
don't vanish. 
To simplify, we assume that, 

\noindent
($\hat{H}_{\lin}$) 
there exists an integer $K \in \{2, \ldots, p\}$ such that 
\begin{equation}
\label{lin_nul_dim_finie}
\langle H_1 \varphi_1, \varphi_K \rangle =0
\quad 
\text{ and }
\quad
\forall j \in \{1, \ldots, p \}-\{K\}, 
\quad
\langle H_1 \varphi_1, \varphi_j \rangle \neq 0. 
\end{equation}
As the solution of the Schrödinger ODE \eqref{ODE} is complex-valued, it means that $\H$ is of codimension 2 and its supplementary is given by $\M=\Span_{\R}(\varphi_K, i \varphi_K)$. 

\subsubsection{Quadratic and cubic behaviors}
\label{sec:expansion_dim_finie}
To prove that $i \varphi_K$ and $\varphi_K$ are approximately reachable vectors, we need to study the behavior of the solution of \eqref{ODE} along the lost directions. Unlike for the previous polynomial toy-models,
here the computations of the first terms of the expansion of \eqref{ODE} are quite heavy. It can be lightened by introducing the new state 
\begin{equation}
\label{def_syst_aux_fin}
\tild{X}(t) := e^{-i H_1 u_1(t)} X(t),
\end{equation}
which solves the following ODE, called the auxiliary system,
\begin{equation}
\label{ODEaux}
\tild{X}'(t) 
= -i e^{-iH_1 u_1(t)} H_0 e^{i H_1 u_1(t)} \tild{X}(t) 
= -i \sum \limits_{k=0}^{+\infty} \frac{ \left(-i u_1(t)\right)^k}{k!} \ad_{H_1}^k(H_0) \tild{X}(t).
\end{equation}
Thus, working with $\tild{X}$, it is easier to quantify the expansion of the solution with respect to the primitives of the control and not with respect to the control $u$. 
This idea was introduced in \cite{C06} and later used in \cite{BM14, B21bis} for the Schrödinger equation. It was also used in finite dimension in \cite{BM18} to study the quadratic behavior of differential systems or in \cite{BLM21} to give refined error estimates for various expansions of scalar-input affine control systems.

\noindent 
By the Duhamel formula, the solution of the auxiliary system \eqref{ODEaux} with $\tild{X}(0)=\varphi_1$ satisfies
\begin{equation}
\label{expr:aux_fin}
\tild{X}(t)
=
X_{1}(t) 
-i
 \int_0^t 
e^{-i H_0 (t-\tau)} 
\sum \limits_{k=1}^{+\infty} \frac{ (-i u_1(\tau))^k}{k!} \ad_{H_1}^k(H_0)
\tild{X}(\tau) d\tau.
\end{equation}
Then, the linear term $\tild{X}_L$, the quadratic term $\tild{X}_Q$ and the cubic term $\tild{X}_C$ of the expansion of $\tild{X}$ around the trajectory $(X_{1}, u \equiv 0)$ are given by,
\begin{align}
\label{expr:X_1,L}
\tild{X}_L(t) &=  -\int_0^t e^{-i H_0 (t-\tau)}  u_1(\tau) \ad^1_{H_1}(H_0) X_{1}(\tau) d\tau, \\
\label{expr:X_1,Q}
\tild{X}_Q(t) &=  \int_0^t e^{-i H_0 (t-\tau)} \left( -u_1(\tau) \ad^1_{H_1}(H_0) \tild{X}_L(\tau) +  \frac{i u_1(\tau)^2}{2} \ad_{H_1}^2(H_0) X_{1}(\tau) \right) d\tau,
\\
\notag
\tild{X}_C(t) &= 
\int_0^t e^{-i H_0 (t-\tau)} \Big( -u_1(\tau) \ad^1_{H_1}(H_0) \tild{X}_Q(\tau) \\
\label{expr:X_1,C}
&
+ 
\frac{i u_1(\tau)^2}{2} \ad_{H_1}^2(H_0) \tild{X}_{L}(\tau) 
+
\frac{u_1(\tau)^3}{6} \ad_{H_1}^3(H_0) X_{1}(\tau)
\Big) d\tau
.
\end{align}
\emph{First-order term.}
Using \eqref{expr:X_1,L}, the linear term along the lost direction is given by
\begin{equation}
\label{bil_etape1}
\langle 
\tild{X}_L(T),
\varphi_K
e^{-i\lambda_1 T}
\rangle
=
(\lambda_K-\lambda_1)
\langle 
H_1 \varphi_1,
\varphi_K
\rangle 
\int_0^T 
u_1(t) 
e^{
i
(\lambda_K- \lambda_1)
(t-T)
}
dt
=0,
\end{equation}
under ($\hat{H}_{\lin}$). The $K$-th direction is also lost at the first order for the auxiliary system.

\smallskip \noindent \emph{Second-order term.}
Substituting the explicit form \eqref{expr:X_1,L} of $\tild{X}_L$ into \eqref{expr:X_1,Q}, the quadratic term along the lost direction is given by, 
\begin{equation}
\label{quad_sans_IPP}
\langle 
\tild{X}_Q(T), 
\varphi_K 
e^{-i \lambda_1 T}
\rangle 
=
-i 
\hat{A}^1_K 
\int_0^T 
u_1(t)^2 
e^{
i
(\lambda_K- \lambda_1)
(t-T)
}
dt
+
\int_0^T 
u_1(t) 
\int_0^t
u_1(\tau)
\hat{k}(t, \tau)
d\tau dt,
\end{equation}
where
\begin{align}
\notag
\hat{A}_K^1
&:= 
- \frac{1}{2}
\langle 
\ad^2_{H_1}(H_0) 
\varphi_1, 
\varphi_K
\rangle
=
\sum 
\limits_{j=1}^{p} 
\left(\lambda_j -  \frac{\lambda_1+\lambda_K}{2} \right) 
\langle 
H_1
\varphi_1, 
\varphi_j
\rangle  
\langle 
H_1
\varphi_K, 
\varphi_j
\rangle,
\\
\label{df_kernel_quad}
\hat{k}(t, \tau)
&:=
\sum \limits_{j=1}^p 
(\lambda_1- \lambda_j)
(\lambda_j - \lambda_K)
\langle H_1 \varphi_1, \varphi_j \rangle
\langle H_1 \varphi_K, \varphi_j \rangle
 e^{i 
 \left(
\lambda_j (\tau-t)
+
\lambda_K(t-T)
+
\lambda_1(T- \tau)
 \right)
 }.
\end{align}
To identify the leading quadratic term, one can compute integrations by parts to get, for all $n \in \N^*$, the existence of a quadratic form $Q_n$ on $\C^{2n}$ such that 
\begin{multline}
\label{quad_IPP}
\langle \tild{X}_Q(T), \varphi_K e^{-i \lambda_1 T}\rangle  
=
-i 
\sum \limits_{m=1}^n 
\hat{A}_K^m
\int_0^T u_m(t)^2 
e^{
i (\lambda_K- \lambda_1)(t-T)
}
dt
\\
+
\int_0^T u_n(t) \int_0^t u_n(\tau) \partial_1^{n-1} \partial^{n-1}_2 \hat{k} (t, \tau) d\tau dt
+
Q_n
\left(
 u_2(T), \ldots, u_n(T), \alpha_2^n, \ldots, \alpha_{n}^n
 \right),
\end{multline}
where, for all $m=2, \ldots, n$, 
\begin{align*}
\alpha_m^n
&:= 
\int_0^T u_n(\tau) \partial_1^m \partial_{2}^{n-1} \hat{k}(T, \tau) d\tau,
\\
\hat{A}_K^m
&:= 
\sum 
\limits_{j=1}^{p} 
\left(\lambda_j -  \frac{\lambda_1+\lambda_K}{2} \right) 
( \lambda_K-\lambda_j)^{m-1} 
(\lambda_j -\lambda_1)^{m-1}
\langle 
H_1
\varphi_1, 
\varphi_j
\rangle  
\langle 
H_1
\varphi_K, 
\varphi_j
\rangle.
\end{align*}
To see more details about this kind of computations, the reader can for example refer to \cite[Section 3.3]{BM20}  or \cite[Section 2.2\ and 5]{B21bis}. To identify the leading quadratic term, one must know which coefficient $\hat{A}^m_K$ is the first to not vanish. From now on, we assume that 

\noindent
($\hat{H}_{\Quad}$) 
$\hat{A}_K^1=\hat{A}_K^2=0$ and $\hat{A}_K^3 \neq 0$. 

\smallskip \noindent
This choice is explained later in \cref{rem:choix_derive}. Then, using \eqref{quad_IPP} for $n=3$ and Cauchy-Schwarz inequality, under ($\hat{H}_{\Quad}$), one gets that  
\begin{equation}
\label{bil_etape2}
\left| 
\langle \tild{X}_Q(T), \varphi_K e^{-i \lambda_1 T}\rangle 
\right|
=
\O
\left(
\| u_3\|^2_{L^2(0,T)}
+
| u_2(T)|^2
+
| u_3(T)|^2
\right).
\end{equation}
Thus, provided that the boundary terms can be neglected, the leading quadratic term of the expansion along the lost direction is $\int_0^T u_3(t)^2 dt.$ 

\smallskip \noindent \emph{Third-order term.}
Substituting the explicit forms \eqref{expr:X_1,L} and \eqref{expr:X_1,Q} of $\tild{X}_L$ and $\tild{X}_Q$ into \eqref{expr:X_1,C}, the cubic term along the lost direction is given by, 
\begin{multline*}
\langle \tild{X}_C(T), \varphi_K e^{-i \lambda_1 T}\rangle 
=
\frac{1}{6}
\langle 
\ad^3_{H_1}(H_0) 
\varphi_1, 
\varphi_K
\rangle
\int_0^T 
u_1(t)^3
e^{
i (\lambda_K- \lambda_1)(t-T)
}
dt
\\
+
\int_0^T 
u_1(t)^2
\int_0^t
u_1( \tau)
\hat{h}_1(t, \tau)
d\tau dt
+ 
\int_0^T 
u_1(t)
\int_0^t
u_1( \tau)^2
\hat{h}_2(t, \tau)
d\tau dt
\\
+
\int_0^T 
u_1(t)
\int_0^t 
u_1( \tau)
\int_0^{\tau}
u_1(s)
\hat{h}_3(t, \tau,s)
ds d\tau dt,
\end{multline*}
where the cubic kernels are given by
\begin{align}
\label{df_kernel_cub_1}
&\hat{h}_1(t, \tau)
:=
\frac{i}
{2}
\sum \limits_{j=1}^p 
(\lambda_j - \lambda_1)
\langle H_1 \varphi_1, \varphi_j \rangle
\langle \ad^2_{H_1}(H_0) \varphi_K, \varphi_j \rangle
e^{
i
\left(
\lambda_K(t-T)
+
\lambda_j(\tau-t)
+
\lambda_1(T-\tau)
\right)
}
,
\\
\label{df_kernel_cub_2}
&\hat{h}_2(t, \tau)
:=
\frac{i}
{2}
\sum \limits_{j=1}^p 
(\lambda_K - \lambda_j)
\langle H_1 \varphi_K, \varphi_j \rangle
\langle \ad^2_{H_1}(H_0) \varphi_1, \varphi_j \rangle
e^{
i
\left(
\lambda_K(t-T)
+
\lambda_j(\tau-t)
+
\lambda_1(T-\tau)
\right)
}
,
\\
\notag 
&\hat{h}_3(t, \tau,s)
:=
\sum \limits_{j=1}^p 
\sum \limits_{n=1}^p 
(\lambda_K - \lambda_j)
(\lambda_1- \lambda_n)
(\lambda_n- \lambda_j)
\langle H_1 \varphi_1, \varphi_n \rangle
\langle H_1 \varphi_n, \varphi_j \rangle
\langle H_1 \varphi_K, \varphi_j \rangle
\\
\label{df_kernel_cub_3}
&\times
e^{
i
\left(
\lambda_K(t-T)
+
\lambda_j(\tau-t)
+
\lambda_1(T-s)
+
\lambda_n(s- \tau)
\right)
}
.
\end{align}
For the Schrödinger PDE \eqref{Schrodinger}, computing formally the Lie bracket, one gets that $\ad^3_{A}(\mu)\varphi=0$ for all $\varphi$. Thus, to have a toy-model fitting the PDE, from now on we assume that 
\begin{equation}
\label{hyp_cubic_en_plus}
\langle 
\ad^3_{H_1}(H_0) 
\varphi_1, 
\varphi_K
\rangle
=0
.
\end{equation} 
Then, the cubic term along the lost direction behaves as 
\begin{multline}
\label{bil_etape3}
\langle \tild{X}_C(T), \varphi_K e^{-i \lambda_1 T}\rangle 
=
\int_0^T 
u_1(t)^2
\int_0^t
u_1( \tau)
\hat{h}_1(t, \tau)
d\tau dt
\\
+ 
\int_0^T 
u_1(t)
\int_0^t
u_1( \tau)^2
\hat{h}_2(t, \tau)
d\tau dt
+
\O 
(
\| u_1\|^3_{L^1(0,T)}
).
\end{multline} 
The last cubic term is neglected in front of the other two in an asymptotic of small-time, and thus, is seen as a small pollution. 

\smallskip \noindent \emph{Error estimate on the expansion.}
With a similar proof as in \cite[Prop.\ 2.5]{B21bis}, one can compute the following error estimate, 
\begin{equation}
\label{bil_etape4}
\| 
(
\tild{X}
-
X_1
- 
\tild{X}_L
-
\tild{X}_Q
-
\tild{X}_C
)(T)
\|
=
\O( \| u_1\|^4_{L^4(0,T)}).
\end{equation}

\begin{rem}[About assumption $(\hat{H}_{\Quad}$)]
\label{rem:choix_derive}
When \eqref{hyp_cubic_en_plus} holds, the equality \eqref{bil_etape3} gives that in an asymptotic of small-time, the leading cubic term is $\int_0^T u_1(t)^2 u_2(t)dt$. 
\begin{itemize}
\item If, instead of  $(\hat{H}_{\Quad}$), we assume that $\hat{A}^1_K \neq 0$, by \eqref{quad_sans_IPP}, the leading quadratic term is $\int_0^T u_1(t)^2 dt$. Thus, for controls small in $W^{-1, \infty}$, the quadratic term prevails on the cubic term and \eqref{ODE} is not $W^{-1, \infty}$. Thus, one must at least assume that $\hat{A}^1_K =0$.
\item If, instead of  $(\hat{H}_{\Quad}$), we assume that $\hat{A}^1_K =0$ and $\hat{A}^2_K \neq 0$, then \eqref{quad_IPP} with $n=2$ gives that the leading quadratic term is $\int_0^T u_2(t)^2 dt$. 
However, the cubic term can't absorb simultaneously such a quadratic term and the quartic term as by Cauchy-Schwarz inequality, one has
\begin{equation*}
\left|
\int_0^T u_1(t)^2 u_2(t) dt 
\right|^2
\ioe
\int_0^T u_2(t)^2 dt 
\int_0^T u_1(t)^4 dt.
\end{equation*}
To overcome this issue, one could try to prove a sharper error estimate than \eqref{bil_etape4}. Instead of doing this, we assume $(\hat{H}_{\Quad}$) so that the leading quadratic term is given by $\int_0^T u_3(t)^2dt$. This time, the cubic term can handle simultaneously such a quadratic term and the terms of order higher than four.
\end{itemize}
\end{rem}

To sum up, the goal of this section is to prove the following result. 
\begin{thm}
Let $H_0$ and $H_1$ satisfying 
($\hat{H}_{\lin}$),
($\hat{H}_{\Quad}$),
\eqref{hyp_cubic_en_plus}, 
and 
($\hat{H}_{\Cub}$). 
Then, the Schrödinger ODE \eqref{ODE} is $H^2_0$-STLC around the ground state: for all $T>0$, for all $\eta>0$, there exists $\delta> 0$ such that for every $X_f \in \C^p$ with $\|X_f - X_1(T)\| < \delta$,  there exists $u \in H^2_0((0,T),\R)$ with $\|u\|_{H^2_0(0,T)} < \eta$ such that the solution $X$ of \eqref{ODE} satisfies
\begin{equation*}
X(T; \ u, \ \varphi_1) = X_f. 
\end{equation*}
\end{thm}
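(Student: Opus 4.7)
My plan is to apply Theorem \ref{black_box} to the end-point map $\F_T$ of \eqref{ODE}, shifted to the origin via $X^*(t) = X(t) e^{i\lambda_1 t} - \varphi_1$ as in Remark \ref{adaptation}, with $X = \R^{2p}$ and $E_T = H^2_0((0,T),\R)$. Assumptions $(A_1)$--$(A_3)$ follow from the smoothness and semigroup property of the flow of the state-linear, control-polynomial ODE \eqref{ODE}; $(A_4)$ is the content of the linear moment problem \eqref{dim_finie_lin}, which under $(\hat{H}_{\lin})$ identifies $\H$ as a real subspace of codimension $2$ with supplementary $\M := \Span_\R(\varphi_K, i \varphi_K)$; and $(A_6)$ is the standard time reversibility of unitary dynamics. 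The heart of the proof is the verification of $(A_5)$: constructing two small-time $H^2_0$-continuously approximately reachable vectors that span $\M$.

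For the first direction I mimic Step 1 of the polynomial toy-model \eqref{toy_model_3} and take
\begin{equation*}
u_b(t) := \sign(b)\, |b|^{7/41}\, \phi^{(3)}\bigl( t / |b|^{4/41} \bigr), \qquad b \in \R,
\end{equation*}
for $\phi \in C_c^\infty((0,1),\R)$ to be chosen. A direct rescaling $t = |b|^{4/41}\theta$ yields $\|u_b\|_{H^2_0(0,T)} = \O(|b|^{1/41})$, while $\int_0^T u_3(t)^2\, dt = \O(|b|^{42/41})$, $\int_0^T u_1(t)^2 u_2(t)\, dt = \O(b)$, the $L^1$-pollution $\|u_1\|^3_{L^1} = \O(|b|^{1+4/41})$, and the quartic remainder in \eqref{bil_etape4} is $\O(|b|^{48/41})$. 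Substituting into \eqref{bil_etape1}--\eqref{bil_etape4} and performing integrations by parts on the cubic kernels $\hat h_1, \hat h_2$ of \eqref{df_kernel_cub_1}--\eqref{df_kernel_cub_2} analogous to those producing \eqref{quad_IPP}, the $K$-th complex projection of $\tild X(T) - X_1(T)$ evaluates to $b\, \hat c\, \hat C_K + \O(|b|^{1 + 1/41})$, where $\hat C_K$ is the ODE analogue of $C_K$ featured in $(\hat{H}_{\Cub})$ and $\hat c$ is an explicit cubic functional of $\phi$. Choosing $\phi$ so that $\hat c\, \hat C_K \neq 0$ produces one approximately reachable vector $\xi_1 \in \M$ with vector variations $\Xi_1(T) := \xi_1\, e^{-i(\lambda_K - \lambda_1)T}$, continuous in $T$; continuity of $b \mapsto u_b$ at zero is built into the scaling.

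For a second, linearly independent direction I adapt Step 2 of \eqref{toy_model_3} and concatenate $u_b \# 0_{[0,T']} \# u_c$, expanding the end-point via $(A_3)$ and a finite-dimensional Gronwall inequality analogous to \eqref{Gronwall_dim_finie}. The free middle evolution acts on the $K$-th complex mode by the scalar $e^{-i(\lambda_K - \lambda_1) T'}$, which is a non-trivial rotation of the real $2$-dimensional space $\M$ for generic $T'$; after setting $c = -b$, the contributions from $u_b$ and $u_c$ combine to produce $b\,\bigl(e^{-i(\lambda_K - \lambda_1)(2T+T')} - 1\bigr)\xi_1 + \O(|b|^{1+1/41})$. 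For generic $T'$ this vector is $\R$-independent from $\xi_1$, supplying the second approximately reachable vector $\xi_2 \in \M$ and completing $(A_5)$. Theorem \ref{black_box}, combined with Remark \ref{rem:time_revers} to handle arbitrary initial data near $\varphi_1$, then yields the $H^2_0$-STLC claimed.

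The main obstacle will be the algebraic identification of the leading cubic coefficient $\hat c\, \hat C_K$: the raw kernels $\hat h_1, \hat h_2$ in \eqref{df_kernel_cub_1}--\eqref{df_kernel_cub_2} are highly oscillatory, and one must verify via integrations by parts that, up to the $o(b)$ remainder, the only surviving contribution is a nonzero multiple of $\int_0^T u_1^2 u_2\, dt$ with prefactor $\hat C_K$, the oscillatory boundary terms being absorbed thanks to the localization of $u_b$. This is the finite-dimensional counterpart of the PDE computation carried out in \cref{sec:expansion}, but substantially lighter since the sums in \eqref{df_kernel_cub_1}--\eqref{df_kernel_cub_3} are finite and no regularity constraint on $\mu$ enters.
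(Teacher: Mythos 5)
Your proposal correctly identifies the overall scheme (apply \cref{black_box}, verify $(A_1)$--$(A_6)$, construct two approximately reachable vectors) and the first-stage oscillating control is the same as the paper's, but the construction of the approximately reachable vector is missing its second, harder step, and without it the claim is false.

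The definition in \cref{def_new_tv} (adapted in \cref{adaptation}) requires the \emph{full} vector estimate $\| X(T;\,u_b,\,\varphi_1) - X_1(T) - b\,\Xi(T)\| \ioe C|b|^{1+s}$, not just the estimate along the $K$-th complex direction. For the polynomial toy-model \eqref{toy_model_3} the components $x_1,x_2,x_3$ vanish identically because the oscillating control is compactly supported, so the estimate only needs to be checked on the lost directions; this is the analogy you are relying on. For the bilinear ODE \eqref{ODE}, however, the linear components $j\neq K$ are oscillatory integrals $\int_0^T u_b(t)\,e^{i(\lambda_j-\lambda_1)(t-T)}\,dt$ that do \emph{not} vanish: three integrations by parts (using $u_1(T)=u_2(T)=u_3(T)=0$) give at best $\O(|b|^{23/41})$, as in \eqref{goal_estim_3}, which dominates $b$. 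So $u_b$ alone does not produce an approximately reachable vector, and your proposed $\xi_1$ fails the definition. The paper's Step~2 in \cref{dim_finie_TV1} fixes this by appending a linear correction $v_b = \Gamma_{T,2T}(\ldots)$ that steers the $\H$-components exactly, and then by proving (via the non-overlapping estimates \eqref{evolution_sol}--\eqref{goal_estim}) that this correction perturbs the $K$-th component only by $\O(\|v_b\|_{L^1}^2) = \O(|b|^{46/41}) = o(b)$. That compatibility argument --- the linear correction is cheap enough, in the right weak norms, not to spoil the cubic gain --- is precisely the point flagged by the paper as nontrivial (``Unlike for the previous polynomial toy-models, it is not straightforward to make sure that \eqref{step2} holds''), and it is absent from your proposal. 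The same omission propagates to your second approximately reachable vector, since the concatenation argument in \cref{dim_finie_TV2} must be applied to the already-corrected control family, not to the raw oscillating controls.
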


\subsubsection{$i \varphi_K$ is a small-time $H^2_0$-continuously approximately reachable vector}
\label{dim_finie_TV1}
Working in two stages as for the polynomial toy-models \eqref{toy_model_2} and \eqref{toy_model_3}, we prove that $i \varphi_K$ is a small-time $H^2_0$-continuously approximately reachable vector associated with vector variations $\Xi(T)= i X_K(T)$. 
\begin{itemize}
\item First, the computations of \cref{sec:expansion_dim_finie} entail the existence of a family of oscillating controls $(u_b)_{b \in \R}$ arbitrary small in $H^2_0(0,T)$ such that 
\begin{equation}
\label{step1}
\langle 
X(T; \ u_b, \ \varphi_1), 
X_K(T)
\rangle
=
i
b
+
\O( |b|^{1+\frac{1}{41}}) 
\quad
\text{ when }
b \rightarrow 0. 
\end{equation}
\item Then, we make sure that
\begin{equation}
\label{step2}
\left\|
\P 
X(T; \ u_b, \ \varphi_1)
-
X_1(T)
\right\|
=
\O( |b|^{1+\frac{1}{41}}),
\end{equation}
where we recall that $\P$ denotes the orthogonal projection on $\H$. 
\end{itemize}

\
\paragraph{\textit{Step 1: Using oscillating controls along the lost direction.}}
Define, for $b \in \R^*$, 
\begin{equation}
\label{def_ub_dim_finie}
u_b(t)
=
\sign(b)
|b|^{\frac{7}{41}}
\phi^{(3)}
\left(
\frac{
t
}
{
|b|^{\frac{4}{41}}
}
\right)
\text { with }
\phi \in C_c^{\infty}(0,1)
\text{ s.\ t.\ }
\int_0^1 \phi''(\theta)^2 \phi'(\theta) d\theta
=
\frac{1}{\hat{C}_K},
\end{equation}
with
$
\hat{C}_K
:=
\hat{h}_1(0,0)
-
\hat{h}_2(0,0)
$ 
where $\hat{h}_1$ and $\hat{h}_2$ are defined in \eqref{df_kernel_cub_1} and \eqref{df_kernel_cub_2}. 
To ensure that $\phi$ exists, one must assume that

\smallskip \noindent
($\hat{H}_{\Cub}$) $\hat{C}_K \neq 0$. 

\noindent First, the same estimate as \eqref{size_ub_H2} gives that the controls are arbitrary small in $H^2_0$ and the map $b \mapsto u_b$ can be extended continuously from $\R$ to $H^2_0(0,T)$. Moreover, substituting these controls into \eqref{bil_etape1}, \eqref{bil_etape2}, \eqref{bil_etape3} and \eqref{bil_etape4}, the same computations as for \eqref{toy_model_3} lead to 
\begin{multline*}
\langle \tild{X}(T; \ u_b, \ \varphi_1), \varphi_K e^{-i \lambda_1T} \rangle
=
b
\int_0^1 
\phi''(\theta_1)^2
\int_0^{\theta_1}
\phi''(\theta_2)
\hat{h}_1
( 
|b|^{
\frac{4}
{41}
}
\theta_1, 
|b|^{
\frac{4}
{41}
}
\theta_2
)
d \theta_2
d\theta_1
\\
+
b
\int_0^1 
\phi''(\theta_1)
\int_0^{\theta_1}
\phi''(\theta_2)^2
\hat{h}_2
( 
|b|^{
\frac{4}
{41}
}
\theta_1, 
|b|^{
\frac{4}
{41}
}
\theta_2
)
d \theta_2
d\theta_1
+
\O( 
|b|^{
1
+
\frac{1}
{41}
}
).
\end{multline*}
Performing the expansion of the kernels $\hat{h}_1$ and $\hat{h}_2$ when $b$ goes to zero, one has
\begin{equation*}
\langle \tild{X}(T; \ u_b, \ \varphi_1), \varphi_K e^{-i \lambda_1T} \rangle
=
i 
\hat{C}_K
b 
\int_0^1 \phi''(\theta)^2 \phi'(\theta) d\theta
e^{i (\lambda_1-\lambda_K)T}
+
\O( 
|b|^{
1
+
\frac{1}
{41}
}
),
\end{equation*}
which gives \eqref{step1} by construction of $\phi$ as $\tild{X}(T)=X(T)$ when $u_1(T)=0$ (see \eqref{def_syst_aux_fin}). 

\
\paragraph{\textit{Step 2: Correcting the linear components.}} 
Unlike for the previous polynomial toy-models, it is not straightforward to make sure that \eqref{step2} holds. Thus, in a second time, the linear components of the solution are corrected using the STLC result on projection on $\H$ given in \cref{lem:linear_test}.
This gives the existence of a control $v_b \in H^2_0(T, 2T)$ such that the solution of \eqref{ODE} on $[T, 2T]$  associated to the control $v_b$ and the initial condition $X(T; \ u_b, \ \varphi_1)$ at time $T$ satisfies 
\begin{equation}
\label{size_vb}
\P X(2T) = X_1(2T)
\quad
\text{ with }
\quad
\| v_b \|_{H^2_0(T, 2T)}
\ioe 
C 
\| 
X(T; \ u_b, \ \varphi_1)
-
X_1(T)
\|.
\end{equation}
Then, \eqref{step2} is verified (for the final time $2T$). However, one needs to check that \eqref{step1} which holds at time $T$ thanks to Step 1, still holds at time $2T$ and has not been destroyed by the linear correction. Thus, one needs to check that 
\begin{equation}
\label{correction}
\left|
\langle X(2T), X_K(2T) \rangle
-
\langle X(T), X_K(T) \rangle
\right|
=
\O(|b|^{1+\frac{1}{41}})
.
\end{equation}

\noindent \emph{Evolution of the solution along the lost direction.}
First, we prove that under ($\hat{H}_{\lin}$), one has, 
\begin{equation}
\label{evolution_sol}
\left|
\langle X(2T), X_K(2T) \rangle
-
\langle X(T), X_K(T) \rangle
\right|
\ioe
C
\| v_b\|_{L^1(T,2T)}^2
.
\end{equation}
To that end, first notice that, using \eqref{dim_finie_lin}, under ($\hat{H}_{\lin}$), one has, 
\begin{equation*}
\forall t \in [0,T], 
\quad
\langle 
X(t), X_K(t)
\rangle
=
\langle 
(X-X_1-X_L)(t)
,
X_K(t)
\rangle. 
\end{equation*}
Besides, looking at \eqref{ODE} and \eqref{ODE_lin}, $X-X_1-X_L$ is the solution of 
\begin{equation*}
i (X-X_1-X_L)'= H_0 (X-X_1-X_L) -u(t) H_1 (X-X_1).
\end{equation*}
Thus, the Duhamel formula gives that the left-hand side of \eqref{evolution_sol} is estimated by 
\begin{multline}
\label{estim:non_add_df}
\left|
\int_T^{2T} 
v_b(t) 
\langle 
H_1 (X-X_1)(t), 
\varphi_K
\rangle
e^{-i\lambda_K (2T-t)}
dt
\right|
\\
\ioe 
C
\| v_b \|_{L^1(T, 2T)} \sup_{t \in [T, 2T]} \| (X-X_1)(t)\|. 
\end{multline}
Moreover, writing the Duhamel formula for the equation satisfied by $X-X_1$, one gets similarly that 
\begin{equation}
\label{df_estim_lin} 
\sup_{t \in [T, 2T]} \| (X-X_1)(t)\| \ioe C \| v_b\|_{L^1(T,2T)} \sup_{t \in [T, 2T]} \| X(t) \|.
\end{equation}
Besides, taking the scalar product of \eqref{ODE} with $X$ and then the imaginary part of the corresponding equality, one gets that the norm of $X$ is preserved. Thus, putting together all these estimates, one gets 
\eqref{evolution_sol}.

\smallskip \noindent \emph{Estimate on the end-point of the solution at time $T$.}
Thus, to prove \eqref{correction}, using \eqref{size_vb} and \eqref{evolution_sol}, it is enough to prove that 
\begin{equation}
\label{goal_estim}
\|
X(T; \ u_b, \ \varphi_1)
-
X_1(T)
\|^2
=
\O(|b|^{1+\frac{1}{41}}),
\end{equation}
that is, we need to estimate the error on the linear part of the solution when using the oscillating controls \eqref{def_ub_dim_finie}. Notice that by the Duhamel formula, as in \eqref{df_estim_lin}, a straightforward estimate  is given by 
\begin{equation*}
\| X(T; \ u_b, \ \varphi_1)
-
X_1(T)
\|
\ioe
C
\| u_b\|_{L^1(0,T)}
=
\O(
|b|^{\frac{11}{41}}
),
\end{equation*}
by definition \eqref{def_ub_dim_finie} of the family $(u_b)_{b \in \R}$. This is not enough to prove \eqref{goal_estim}. One can compute a sharper estimate by writing instead that
\begin{equation}
\label{goal_estim_0}
\| 
X(T; \ u_b, \ \varphi_1)
-
X_1(T)
\|
\ioe 
\| 
X_L(T; \ u_b, \ \varphi_1)
\|
+
\| (X-X_1-X_L)(T; \ u_b, \ \varphi_1)
\|.
\end{equation}
Moreover, using the estimate given in \cite[Prop.\ 2.6]{B21bis}, one has 
\begin{equation}
\label{goal_estim_1}
\| (X-X_1-X_L)(T; \ u_b, \ \varphi_1)
\|
\ioe
C
\| u_1\|_{L^2(0,T)}^2
=
\O
(
|b|^{
\frac{26}{41}
}
),
\end{equation}
looking at \eqref{def_ub_dim_finie}. Moreover, looking at the explicit computations given in \eqref{dim_finie_lin}, the linear part is estimated by
\begin{equation}
\label{goal_estim_2}
\| 
X_L(T; \ u_b, \ \varphi_1)
\|
\ioe 
C
\max_{j=1, \ldots, p}
\left|
\int_0^T u_b(t) e^{i(\lambda_j-\lambda_1)(t-T)} dt
\right| 
.
\end{equation}
Besides, for every $j=2, \ldots, p$, performing three integrations by parts as $u_1(T)=u_2(T)=u_3(T)=0$, one has
\begin{equation}
\label{goal_estim_3}
\left|
\int_0^T u_b(t) e^{i(\lambda_j-\lambda_1)(t-T)} dt
\right| 
=
\left|
(\lambda_j-\lambda_1)^3
\int_0^T u_3(t) e^{i(\lambda_j-\lambda_1)(t-T)} dt
\right|
=
\O( 
|b|^{
\frac{23}{41}
}
)
\end{equation}
looking at \eqref{def_ub_dim_finie}. Notice that this also holds for $j=1$ because in this case, the left hand-side is equal to $u_1(T)=0$. Therefore, \eqref{goal_estim_0}, \eqref{goal_estim_1}, \eqref{goal_estim_2}, \eqref{goal_estim_3} lead to 
\begin{equation*}
\| 
X(T; \ u_b, \ \varphi_1)
-
X_1(T)
\|
=
\O(
|b|^{\frac{23}{41}}
),
\end{equation*}
which gives \eqref{goal_estim} concluding Step 2. 

\subsubsection{$\varphi_K$ is a small-time $H^2_0$-continuously approximately reachable vector}
\label{dim_finie_TV2}
As for the polynomial toy-model \eqref{toy_model_3}, the second approximately reachable vector is built from the first one in a way inspired by the work \cite[Th.\ 6]{HK87}.  
To that end, denote by $(u_b)_{b \in \R}$ the control variations associated with $i \varphi_K$ constructed in \cref{dim_finie_TV1}. The goal is to prove that for every $b, c \in \R$ small enough, 
\begin{equation}
\label{goal_TV2}
X(3T; \ u_b \# 0_{[0, T]} \# u_c, \ \varphi_1) 
=
X_1(3T)
+
(
i 
c
e^{
2i( \lambda_K-\lambda_1)T
}
+
i
b
)
X_K(3T)
+
\O( | (b, c) |^{1+\frac{1}{41}})
.
\end{equation}
Thus, for all $T \in \left(0, \frac{\pi}{2(\lambda_K-\lambda_1)} \right)$, taking $c=-\frac{\alpha}{\sin(2(\lambda_K-\lambda_1)T)}$ and $b=-c \cos(2(\lambda_K-\lambda_1)T)$, this provides a family of controls $(v_{\alpha})_{\alpha \in \R}$ such that when $\alpha$ goes to zero, 
\begin{equation*}
X(3T; \ v_{\alpha}, \ \varphi_1)
=
X_1(3T)
+
\alpha 
X_K(3T)
+
\O
(
|\alpha|^{1+\frac{1}{41}}
)
\quad
\text{ with }
\| v_{\alpha}\|_{H^2_0(0,3T)} \ioe C |\alpha|^{\frac{1}{41}}.
\end{equation*}
This will give that $\varphi_K$ is a small-time $H^2_0$-continuously approximately reachable vector associated with vector variations $\Xi(T)= X_K(T)$.
So, it remains to prove \eqref{goal_TV2}. First, by construction of the family $(u_b)_{b \in \R}$, one has
\begin{equation*}
X(T; \ u_b, \ \varphi_1)
=
X_1(T)
+ 
i b X_K(T)
+
\O( |b|^{1+\frac{1}{41}})
\quad
\text{ with }
\| u_b\|_{H^2_0(0,T)}
\ioe 
C
|b|^{\frac{1}{41}}.
\end{equation*}
Then, on $[T, 2T]$, no control is activated, so, 
\begin{equation*}
X(2T; \ u_b \# 0_{[0,T]}, \ \varphi_1)
=
e^{-iH_0 T}X(T; \ u_b, \ \varphi_1)
=
X_1(2T)
+
ib X_K(2T)
+
\O( |b|^{1+\frac{1}{41}}).
\end{equation*}
Moreover, using the semi-group property of the equation, one has
\begin{equation}
\label{sur_3T_4T}
X(3T; \ u_b \# 0_{[0,T]} \# u_c, \ \varphi_1)
=
X(T; \ u_c, \ X(2T; \ u_b \# 0_{[0,T]}, \ \varphi_1))
.
\end{equation}
Besides, using Gronwall Lemma, one proves the existence of $C>0$ such that for all $\tau>0$, $p \in \R^5$ with $\| p\| < 1$ and $u \in H^2_0(0,T)$ with $\| u\|_{H^2_0} <1$, 
\begin{equation}
\label{Gronwall_df}
\| 
X(T; \ u, \ X_1(\tau)+p)
-
X(T; \ u, \ \varphi_1) e^{-i \lambda_1 \tau}
-
e^{-i H_0 T} p
\|
\ioe
C
\| 
u
\|_{H^2(0,T)}
\| 
p 
\|.
\end{equation}
The proof is left to the reader but one can refer to \cref{prop:dep_ci} for a similar proof for the Schrödinger PDE. 
Taking $u=u_c$, $\tau=2T$ and $p_b=ib X_K(2T) + \O( |b|^{1+\frac{1}{41}})$, one gets that $\| u_c\|_{H^2(0,T)} \| p_b\|=\O(|c|^{\frac{1}{41}} |b|)$. Moreover, by construction of $(u_c)$, 
$$X(T; \ u_c, \ \varphi_1)= X_1(T) + i cX_K(T) + \O( |c|^{1+\frac{1}{41}}).$$
Thus, using \eqref{Gronwall_df}, \eqref{sur_3T_4T} becomes 
\begin{equation*}
X(3T; \ u_b \# 0_{[0, T]} \# u_c, \ \varphi_1) 
=
X_1(3T)+ ic X_K(T) e^{-2 i \lambda_1 T} + ib X_K(3T) + \O( |b, c|^{1+\frac{1}{41}}),
\end{equation*}
which concludes the proof of \eqref{goal_TV2}. 

\subsubsection{Towards the Schrödinger PDE}
Let us state here the main difficulties we are going to face for the Schrödinger PDE \eqref{Schrodinger} compared to the ODE \eqref{ODE}.
\begin{itemize}
\item In \cref{sec:expansion}, the computations of the expansion of the solution will be quite similar. The only difference is that the kernels will be defined as function series. Thus, the regularity and boundness of such kernels needed to perform integrations by parts will not be straightforward but will stem from (H$_{\reg})$. 

\item The main difficulty for the PDE will be to prove that $i \varphi_K$ is a $H^2_0$-continuously approximately reachable vector (the second approximately reachable vector will be deduced from the first with the same proof). Using the same oscillating controls as for the ODE, we will have similarly that
\begin{equation}
\label{idee1}
\langle 
\psi(T; \ u_b, \ \varphi_1)
,
\psi_K(T)
\rangle 
=
i b
+
\O( 
|b|^{1+\frac{1}{41}}
).
\end{equation}
Then, contrary to the finite dimensional case, we will need to correct an infinite number of linear directions. This will also be done using the STLC in projection on $\H$ to get the existence of $v_b \in H^2_0(T, 2T)$ such that 
\begin{equation*}
\P \psi(2T) = \psi_1(2T).
\end{equation*}
The core of the paper is to prove that such a linear correction didn't destroy the work in \eqref{idee1}. This is done using two ingredients.
\begin{itemize}
\item The STLC result in projection provides an estimate, \eqref{size_vb} in finite dimension, on the linear control by the data to be reached. For the Schrödinger PDE, the classical estimate giving that the $L^2$-norm of the control is estimated by the data to be controlled in the $H^{3}_{(0)}$-norm is not sharp enough. The whole work of \cite{B21} has consisted in establishing sharper (and simultaneous) estimates on the control to make this step work.
\item Also, in finite dimension, the evolution of the solution along the lost dimension \eqref{evolution_sol} is estimated by the $L^1$-norm of the linear control. Once again, this will not be sharp enough for the Schrödinger PDE. That is why in \cref{sec:non_add}, we quantify more precisely the evolution of the solution along the lost direction.
\end{itemize}
\end{itemize}

\section{Well-posedness and STLC of the Schrödinger equation}
\label{sec:WP_Schro}

\subsection{Well-posedness of the Schrödinger equation}
In this section, we recall the result given in \cite[Theorem 2.1]{B21} about the existence and uniqueness of the solution of the following Cauchy problem, stressing the link between the regularity of the solution and the boundary conditions on the dipolar moment $\mu$,
\begin{equation}  
\label{Schro_source_term} 
\left\{
    \begin{array}{ll}
        i \partial_t \psi(t,x) = - \partial^2_x \psi(t,x) -u(t)\mu(x)\psi(t,x) -f(t,x),  \quad &(t,x) \in (0,T) \times (0,1),\\
        \psi(t,0) = \psi(t,1)=0, \quad &t \in (0,T), \\
        \psi(0,x) = \psi_0(x), \quad &x \in (0,1).
    \end{array}
\right.  
\end{equation}
\begin{thm}
\label{wp}
Let $T>0$, $(p,k) \in \N^2$, $\mu \in H^{2(p+k)+3}( (0,1), \R)$ with $\mu^{(2n+1)}(0)=\mu^{(2n+1)}(1)=0$ for all $n=0, \ldots, p-1$ , $u \in H^{k}_0( (0,T), \R)$, $\psi_0 \in H^{2(p+k)+3}_{(0)}(0,1)$ and $f \in H^{k}_0 ( (0,T), H^{2p+3} \cap H^{2p+1}_{(0)}(0,1))$.
There exists a unique solution of \eqref{Schro_source_term}, that is a function $\psi \in C^{k}( [0,T], H^{2p+3}_{(0)}(0,1))$ with $\psi(T)$ in $H^{2(p+k)+3}_{(0)}(0,1)$ such that the following equality holds in $H^{2p+3}_{(0)}$ for every $t \in [0,T]$:
\begin{equation*}
\psi(t) = e^{-iAt} \psi_0 + i \int_0^t e^{-iA (t- \tau)} \left( u(\tau) \mu \psi(\tau) + f(\tau) \right) d\tau.
\end{equation*}
Moreover, for every $R>0$, there exists $C=C(T, \mu, R)>0$ such that if $\| u \|_{H^k_0(0,T)} < R$, then this solution satisfies
\begin{equation*}
\| \psi(T)\|_{H^{2(p+k)+3}_{(0)}}, 
\
\| \psi \|_{C^k( [0,T], H^{2p+3}_{(0)})} 
\ioe 
C
\left(
\| \psi_0 \|_{H^{2(p+k)+3}_{(0)}} 
+
\| f \|_{H^k( (0,T), H^{2p+3} \cap H^{2p+1}_{(0)})}
\right).
\end{equation*}
\end{thm}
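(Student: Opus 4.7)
My plan is to establish well-posedness by a Banach--Picard argument on the Duhamel formulation, combined with a bootstrap on the pair of indices $(p,k)$.

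\textbf{Base case $k=0$.} I would run the fixed-point scheme for the map
\[
\Phi(\psi)(t) := e^{-iAt}\psi_0 + i\int_0^t e^{-iA(t-\tau)}\bigl(u(\tau)\mu\psi(\tau) + f(\tau)\bigr)d\tau
\]
on $C^0([0,T^*], H^{2p+3}_{(0)}(0,1))$. Since $e^{-iAt}$ acts as an isometry on every $H^s_{(0)}$-space, the contraction reduces to the multiplicative estimate $\|\mu\varphi\|_{H^{2p+3}_{(0)}} \ioe C_{\mu}\|\varphi\|_{H^{2p+3}_{(0)}}$. A local solution on $[0,T^*]$ follows for $T^*$ small enough, and continuation to $[0,T]$ comes from $L^2$-conservation (obtained by testing \eqref{Schro_source_term} against $\bar\psi$ and taking imaginary parts) plus a Gronwall inequality on $\|\psi(t)\|_{H^{2p+3}_{(0)}}$, a standard energy argument using $\langle A^{(2p+3)/2}(u\mu\psi), A^{(2p+3)/2}\psi\rangle$.

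\textbf{The role of the boundary conditions on $\mu$.} The only non-routine point is the multiplicative estimate. Membership $\varphi \in H^{2p+3}_{(0)} = \dom(A^{(2p+3)/2})$ imposes that the even-order traces $\varphi^{(2j)}(0),\varphi^{(2j)}(1)$ vanish for $j=0,\ldots,p$, while the odd-order traces remain free. One must check that $\mu\varphi$ inherits the same structure. An iterated Leibniz expansion
\[
A^j(\mu\varphi) = \sum_{l=0}^{2j}(-1)^l\binom{2j}{l}\mu^{(l)}\varphi^{(2j-l)}
\]
together with a parity inspection shows that the surviving boundary contributions at every level $j\ioe p+1$ couple a free odd-order trace of $\varphi$ with an odd-order derivative $\mu^{(2n+1)}$; the assumption $\mu^{(2n+1)}(0)=\mu^{(2n+1)}(1)=0$ for $n=0,\ldots,p-1$ is exactly what kills them and closes the estimate.

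\textbf{Induction on $k$.} Differentiating \eqref{Schro_source_term} formally $j\ioe k$ times in time yields
\[
i\partial_t\psi^{(j)} = A\psi^{(j)} - \sum_{l=0}^{j}\binom{j}{l}u^{(l)}\mu\,\psi^{(j-l)} - f^{(j)}.
\]
Because $u \in H^k_0(0,T)$ and $f \in H^k_0$, the compatibility conditions $u^{(l)}(0)=f^{(l)}(0)=0$ for $l<k$ let me define $\psi^{(j)}(0)$ recursively starting from $\psi(0)=\psi_0 \in H^{2(p+k)+3}_{(0)}$, placing each $\psi^{(j)}(0)$ in $H^{2(p+k-j)+3}_{(0)}$. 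Running the $k=0$ theory on each layer yields $\psi\in C^k([0,T],H^{2p+3}_{(0)})$. The endpoint gain $\psi(T)\in H^{2(p+k)+3}_{(0)}$ comes from reading the equation at $t=T$: using $u^{(l)}(T)=f^{(l)}(T)=0$ for $l<k$, one inverts $i\partial_t^j\psi(T) = A\partial_t^{j-1}\psi(T) + \text{(lower-order bilinear terms in }H^{2p+3}_{(0)}\text{)}$ to climb $k$ extra levels of spatial regularity; the hypothesis $\mu\in H^{2(p+k)+3}$ ensures the bilinear corrections stay in the right space at each level.

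\textbf{Main obstacle.} The fixed-point scheme is routine; the delicate step is the boundary bookkeeping in the multiplicative estimate. The space $H^{2p+3}_{(0)}$ is defined by a tower of Dirichlet-type boundary conditions on the even-order derivatives only, and preservation under multiplication by $\mu$ forces the precise odd-order Neumann-type conditions on $\mu$ assumed in the hypothesis. Matching the algebra correctly at every level of this tower, and then propagating the same structure through the $k$-fold time differentiation (where the regularity index on $\mu$ has to grow to $2(p+k)+3$ to accommodate the bilinear source terms in the equations for $\psi^{(j)}$), is where the technicalities live.
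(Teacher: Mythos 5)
The key step in your base case is false: multiplication by $\mu$ does \emph{not} map $H^{2p+3}_{(0)}$ into $H^{2p+3}_{(0)}$, so the contraction you set up in $C^0([0,T],H^{2p+3}_{(0)})$ does not close. Your own Leibniz computation already reveals this if you push it one level further: $H^{2p+3}_{(0)}=\dom(A^{p+3/2})$ requires $\varphi^{(2j)}(0)=\varphi^{(2j)}(1)=0$ for $j=0,\ldots,p+1$ (you stopped at $j=p$). At the top level $j=p+1$, the expansion of $(\mu\varphi)^{(2p+2)}$ produces the term $\binom{2p+2}{2p+1}\mu^{(2p+1)}\varphi'$, which requires $\mu^{(2p+1)}(0)=\mu^{(2p+1)}(1)=0$, i.e.\ the case $n=p$ --- but the hypothesis only gives $n=0,\ldots,p-1$. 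So $\mu\varphi$ lands in $H^{2p+3}\cap H^{2p+1}_{(0)}$, missing exactly one tier of boundary conditions, which is precisely why the source term $f$ in the theorem is taken in $H^{2p+3}\cap H^{2p+1}_{(0)}$ rather than $H^{2p+3}_{(0)}$. \cref{rem:cont_mu} of the paper states this explicitly for $p=2$: multiplication by $\mu$ maps $H^7_{(0)}$ into $H^7\cap H^5_{(0)}$ but \emph{not} into $H^7_{(0)}$.

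The missing ingredient is the smoothing effect of the Duhamel operator, recalled in \cref{estim_G_Ck}: if $g\in H^k_0((0,T),H^{2p+3}\cap H^{2p+1}_{(0)})$, then $t\mapsto\int_0^t e^{-iA(t-\tau)}g(\tau)\,d\tau$ is in $C^k([0,T],H^{2p+3}_{(0)})$, i.e.\ the time integration against the Schr\"odinger group \emph{recovers} the lost level of boundary conditions. This is the genuine technical heart of the result (it goes back to Beauchard--Laurent \cite{BL10}, and the version with general $(p,k)$ is in \cite{B21}, which the paper simply cites and does not reprove). Once the smoothing lemma is available, your remaining scaffolding --- Banach--Picard, $L^2$-conservation for global continuation, time-differentiation and the recursive definition of $\psi^{(j)}(0)$, reading the equation at $t=T$ to climb in spatial regularity --- is the right structure. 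But without it, the multiplicative estimate you rely on is simply not true, and the fixed point does not close in the claimed space.
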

We will sometimes write $\psi( \cdot ; \ u, \ \psi_0)$ to denote the solution of \eqref{Schrodinger} associated with control $u$ and initial data $\psi_0$ when we will need to keep track of such a dependency. 

\begin{rem}
\label{rem:cont_mu}
Notice that when $\mu$ satisfies (H$_{\reg}$), the multiplication operators 
\begin{align}
\label{cont_mu}
\varphi 
\quad 
&\mapsto 
\quad 
\mu 
\varphi,
\\
\label{cont_exp}
\varphi 
\quad 
&\mapsto 
\quad 
e^{ i \alpha \mu}
\varphi,
\quad 
\alpha \in \R,
\end{align}
maps continuously $H^7_{(0)}$ and $H^7 \cap H^5_{(0)}$ into $H^7 \cap H^5_{(0)}$ but does not map continuously $H^7_{(0)}$ into $H^7_{(0)}$.
Moreover, the operator
\begin{equation}
\label{cont_der}
\varphi 
\quad 
\mapsto 
\quad 
2
\mu' 
\varphi'
+
\mu''
\varphi,
\end{equation}
maps continuously $H^7 \cap H^5_{(0)}$ into $H^6 \cap H^3_{(0)}$ and the operator 
\begin{equation}
\label{cont_mu_der}
\varphi 
\quad 
\mapsto 
\quad 
\mu'^2 
\varphi,
\end{equation}
maps continuously $H^7 \cap H^5_{(0)}$ into $H^7 \cap H^5_{(0)}$. 
Indeed, for \eqref{cont_mu}, Leibniz formula gives for $n=0,1,2$, 
\begin{equation*}
( \mu \varphi)^{(2n)}
=
\sum \limits_{k=0}^n
\binom{2n}{2k}
\mu^{(2k)}
\varphi^{(2n-2k)}
+
\sum \limits_{k=0}^{n-1}
\binom{2n}{2k+1}
\mu^{(2k+1)}
\varphi^{(2n-2k-1)}
.
\end{equation*}
Thus, if $\varphi \in H^7_{(0)}$, for all $n=0, 1, 2$, $( \mu \varphi)^{(2n)}$ vanishes at $x=0,1$ because for all $k \in \{0, \ldots, n\}$, $\varphi^{(2n-2k)}$ does and for all $k \in \{0, \ldots, n-1\}$, $\mu^{(2k+1)}$ does. This gives the continuity of \eqref{cont_mu}. Notice that one can't go higher as for $( \mu \varphi)^{(6)}$, in the sum, the term $\mu^{(5)} \varphi'$ doesn't vanish at $0$ and $1$. The other continuities are proved the same.  These continuities will be the key to prove the well-posedness of the equations considered in the following (see \cref{rem:wp}, Sections \ref{an_auxiliary_system} and \ref{expansion_aux}).
\end{rem}

\begin{rem}
\label{rem:wp}
Thanks to \cref{rem:cont_mu} on \eqref{cont_mu}, from \cref{wp} with $p=k=2$, one deduces that, when $\mu$ satisfies (H$_{\reg}$), for every $\psi_0 \in H^{11}_{(0)}$, $\phi \in C^2( [0,T], H^{7}_{(0)})$ and $u, v \in H^2_0(0,T)$, the Schrödinger equation 
\begin{equation*}  
\left\{
    \begin{array}{ll}
        i \partial_t \psi(t,x) = - \partial^2_x \psi(t,x) -u(t)\mu(x) \psi - v(t)\mu(x)\phi,  \quad &(t,x) \in (0,T) \times (0,1),\\
        \psi(t,0) = \psi(t,1)=0, \quad &t \in (0,T), \\
        \psi(0,x) = \psi_0(x), \quad &x \in (0,1).
    \end{array}
\right.  
\end{equation*}
admits a unique solution $\psi \in C^{2}( [0,T], H^{7}_{(0)}(0,1))$ with $\psi(T)$ in $H^{11}_{(0)}(0,1)$. Moreover, for every $R>0$, there exists $C=C(T, \mu, R)>0$ such that if $\| u \|_{H^2_0(0,T)} < R$,  this solution satisfies
\begin{equation}
\label{estim_sol_bis}
\| \psi(T)\|_{H^{11}_{(0)}}, 
\quad
\| \psi \|_{C^2( [0,T], H^{7}_{(0)})} 
\ioe 
C
\left(
\| \psi_0 \|_{H^{11}_{(0)}} 
+
\| v \|_{H^2_0(0,T)}
\| \phi \|_{C^2( [0,T], H^{7}_{(0)})}
\right).
\end{equation}
This will be the regularity on solutions used in all this paper. 
\end{rem}

\subsection{Dependency of the solution with respect to the initial condition}
From the well-posedness result given in \cref{wp}, one can deduce the following result about the dependency of the solution of \eqref{Schrodinger} with respect to the initial condition. 
\begin{prop}
\label{prop:dep_ci}
Let $T>0$, $\mu$ satisfying (H$_{\reg}$) and $\psi_0 \in H^{11}_{(0)}(0,1)$ and $\tau \in \R$. For all $R>0$, there exists $C=C(T, \mu, R)>0$ such that for all $u \in H^2_0(0,T)$ with $\| u \|_{H^2_0(0,T)}<R$, one has
\begin{equation*}
\| \psi(T; \ u, \ \psi_1(\tau) +\psi_0) -\psi(T; \ u, \ \varphi_1)e^{-i\lambda_1 \tau} - e^{-iAT}\psi_0 \|_{H^{11}_{(0)}} 
\ioe 
C \| u \|_{H^2_0(0,T)} \| \psi_0 \|_{H^{11}_{(0)}}.
\end{equation*}
\end{prop}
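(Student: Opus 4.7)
The plan is to reduce the estimate to a source-term Schrödinger problem and then invoke \cref{wp}. For fixed $u$, equation \eqref{Schrodinger} is linear in the initial data, and $\psi_1(\tau)=e^{-i\lambda_1\tau}\varphi_1$ is a scalar multiple of $\varphi_1$; hence
$$\psi(T;u,\psi_1(\tau)+\psi_0) = e^{-i\lambda_1\tau}\,\psi(T;u,\varphi_1) + \psi(T;u,\psi_0).$$
The desired inequality thus reduces to the $\tau$-independent bound
$$\|\psi(T;u,\psi_0) - e^{-iAT}\psi_0\|_{H^{11}_{(0)}} \ioe C\|u\|_{H^2_0}\|\psi_0\|_{H^{11}_{(0)}}.$$

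Setting $\eta(t):=\psi(t;u,\psi_0) - e^{-iAt}\psi_0$, one has $\eta(0)=0$ and
$$i\partial_t\eta = A\eta - u(t)\mu\,\eta - u(t)\mu\,e^{-iAt}\psi_0,$$
with homogeneous Dirichlet conditions. I read this as a controlled Schrödinger equation with control $u$, zero initial data, and source term $f(t):=u(t)\mu\,e^{-iAt}\psi_0$. Applying \cref{wp} with $p=k=2$ (so that $2(p+k)+3=11$, $2p+3=7$, $2p+1=5$) will yield
$$\|\eta(T)\|_{H^{11}_{(0)}} \ioe C\,\|f\|_{H^2((0,T),\,H^7\cap H^5_{(0)})},$$
so the problem reduces to proving the bound $\|f\|_{H^2((0,T),\,H^7\cap H^5_{(0)})} \ioe C\|u\|_{H^2_0}\|\psi_0\|_{H^{11}_{(0)}}$.

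For this last estimate, since $A^k$ maps $H^{11}_{(0)}$ into $H^{11-2k}_{(0)}$ and the semigroup $e^{-iAt}$ is diagonal in the eigenbasis $(\varphi_j)$, one has $t\mapsto e^{-iAt}\psi_0 \in C^2([0,T],H^7_{(0)})$ with norm controlled by $\|\psi_0\|_{H^{11}_{(0)}}$. By \cref{rem:cont_mu}, which crucially uses the boundary conditions from (H$_{\reg}$), multiplication by $\mu$ sends $H^7_{(0)}$ continuously into $H^7\cap H^5_{(0)}$, so $t\mapsto \mu\,e^{-iAt}\psi_0 \in C^2([0,T],H^7\cap H^5_{(0)})$. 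Multiplying by the scalar $u\in H^2_0(0,T)$ then yields $f\in H^2_0((0,T),H^7\cap H^5_{(0)})$ with the required bound, the conditions $u(0)=u(T)=u'(0)=u'(T)=0$ ensuring that $f$ and $\partial_t f$ also vanish at $t=0,T$. The main subtlety lies precisely in matching the space-time regularities here: the obstruction pointed out in \cref{rem:cont_mu}, namely that multiplication by $\mu$ does \emph{not} preserve $H^{11}_{(0)}$, is exactly what forces the choice $p=k=2$ together with the target space $H^7\cap H^5_{(0)}$ when applying \cref{wp}.
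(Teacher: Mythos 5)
Your proof is correct and follows essentially the same route as the paper: after the (trivial) linearity-in-initial-data reduction, your $\eta$ is exactly the paper's $\Lambda$, and both arguments close by viewing $u(t)\mu e^{-iAt}\psi_0$ as a source term and invoking \cref{wp} with $p=k=2$ (the paper through the intermediate \cref{rem:wp}), together with the mapping properties of multiplication by $\mu$ from \cref{rem:cont_mu}. The only cosmetic difference is that the paper defines $\Lambda$ directly and reads off its equation rather than first splitting off the $\psi(T;u,\psi_0)$ term.
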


\begin{proof}
Define, for all $t \in [0,T]$,
$
\Lambda(t):=\psi(t; \ u, \ \psi_1(\tau) +\psi_0) -\psi(t; \ u, \ \varphi_1)e^{-i\lambda_1 \tau} - e^{-iAt}\psi_0. 
$
Notice that $\Lambda$ is the solution of,
\begin{equation*}
i \partial_t \Lambda = - \partial^2_x \Lambda-u(t)\mu(x)\Lambda -u(t) \mu(x) e^{-iAt}\psi_0,
\end{equation*}
with Dirichlet conditions and $\Lambda(0,\cdot)=0$.
%
%
Therefore, \cref{rem:wp} gives the existence of $C>0$ such that 
\begin{equation*}
\| \Lambda(T) \|_{H^{11}_{(0)}} 
\ioe C  \| u \|_{H^2_0(0,T)} \| e^{-iAt} \psi_0 \|_{C^2( [0,T], H^{7}_{(0)})}
=
C  \| u \|_{H^2_0(0,T)} \| \psi_0 \|_{ H^{11}_{(0)} }
.
\end{equation*}
\end{proof}

\subsection{Controllability in projection by the linear test with simultaneous estimates}
In this section, we recall the local controllability result in projection by the linear test given in \cite{B21} as it will be useful in this paper. To that end, we introduce the following notations: if $J$ is a subset of $\N^*$, we define the space
$
\H := \overline{\Span_{\C}} \left( \varphi_j , \ j \in J \right)
$
and the orthogonal projection on $\H$ given by 
$
\P_{\J}(\psi)
=
\psi - \sum \limits_{j \not\in J} \langle \psi, \varphi_j \rangle \varphi_j
$
for all $\psi \in L^2(0,1)$.

\begin{thm}
\label{linear_STLC}
Let $(p, k) \in \N^2$ with $p \soe k$, $J$ a subset of $\N^*$ and $\mu \in H^{2(p+k)+3}( (0,1), \R)$ such that $\mu^{(2n+1)}(0)=\mu^{(2n+1)}(1)=0$ for all $n=0, \ldots, p-1$ and 
\begin{equation}
\label{hyp_mu}
\text{there exists a constant } c>0 \text{ such that for all } j \in J, 
\quad
| \langle \mu \varphi_1, \varphi_j \rangle | \geq \frac{c}{j^{2p+3}}.
\end{equation}
Then, the Schrödinger equation \eqref{Schrodinger} is STLC in projection around the ground state with controls in $H^m_0(T_0,T)$ and targets $H^{2(p+m)+3}_{(0)}$ for every $m \in \{0, \ldots, k \}$ with the same control map. 

\smallskip \noindent 
More precisely, for all initial time $T_0 \soe 0$ and final time $T> T_0$, there exists $C$, $\delta >0$ and a $C^1$-map $\Gamma_{T_0,T} : \Omega_{T_0} \times \Omega_T \rightarrow H^k_0( (T_0,T), \R)$ where 
\begin{align}
\label{def_Omega_T0}
\Omega_{T_0} &:= \{ \psi_0 \in \S \cap H^{2(p+k)+3}_{(0)} ; \ \| \psi_0 - \psi_1(T_0) \|_{H^{2(p+k)+3}_{(0)}} < \delta \},
\\
\label{def_Omega_T}
\Omega_T &:= \{ \psi_f \in \H \cap H^{2(p+k)+3}_{(0)} ; \ \| \psi_f - \P_{\J} \left( \psi_1(T) \right) \|_{H^{2(p+k)+3}_{(0)}} < \delta \},
\end{align}
such that $\Gamma_{T_0,T}(\psi_1(T_0), \psi_1(T))=0$ and for every $(\psi_0, \psi_f) \in \Omega_{T_0} \times \Omega_T$, the solution of \eqref{Schrodinger} on $[T_0, T]$ with control $u:=\Gamma_{T_0,T}(\psi_0,\psi_f)$ and initial condition $\psi_0$ at $t=T_0$ satisfies 
\begin{equation}
\label{contr_proj}
 \P_{\J} \left( \psi(T) \right)= \psi_f,
\end{equation}
with the following boundary conditions on the control
\begin{equation}
\label{eq:weak_bc_nl}
u_2(T)= \ldots= u_{k+1}(T)=0, 
\end{equation}
where here $(u_n)_{n \in \N}$ denotes the iterated primitives of $u$ vanishing at $T_0$. Besides, for all $m$ in $\{-(k+1), \ldots, k\}$, the following estimates hold
\begin{equation}
\label{estim_contr_nl}
\| u \|_{H^m_0(T_0,T)}
\ioe
C
\left(
\| \psi_0 - \psi_1(T_0) \|_{H^{2(p+m)+3}_{(0)}} 
+
\| \psi_f - \P_{\J} \psi_1(T) \|_{H^{2(p+m)+3}_{(0)}} 
\right)
.
\end{equation}
\end{thm}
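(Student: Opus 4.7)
The plan is to linearize around the ground state trajectory $(\psi_1, u \equiv 0)$, reduce controllability in projection on $\H$ to a trigonometric moment problem, solve this moment problem with a single control meeting the prescribed boundary conditions \eqref{eq:weak_bc_nl} and the simultaneous estimates \eqref{estim_contr_nl}, and then apply the inverse mapping theorem to transfer the statement to the nonlinear equation.

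Writing $\psi = \psi_1 + \Psi$ and keeping first-order terms, the linearized system is $i\partial_t \Psi = -\partial_x^2 \Psi - v(t)\mu\psi_1$ with $\Psi(T_0)=0$. Duhamel's formula and projection on $\psi_j(T)$ yield
\begin{equation*}
\langle \Psi(T),\psi_j(T)\rangle = i\langle \mu\varphi_1,\varphi_j\rangle \int_{T_0}^T v(\tau)\, e^{i(\lambda_j-\lambda_1)\tau} d\tau, \quad j \in \N^*,
\end{equation*}
so that linear controllability in projection on $\H$ amounts to solving the trigonometric moment problem with data $m_j := c_j/(i\langle\mu\varphi_1,\varphi_j\rangle)$ for $j \in J$, where $c_j$ are the Fourier coefficients of the target relative to the free evolution. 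Because $\lambda_{j+1}-\lambda_j \to +\infty$, the family $(e^{i(\lambda_j-\lambda_1)\cdot})_{j\in\N^*}$ is a Riesz basis of its closed span in $L^2(T_0,T)$ for any $T>T_0$, so the moment problem is solvable whenever $(m_j)_{j\in J} \in \ell^2$. Assumption \eqref{hyp_mu} quantifies the loss of regularity: for a datum in $H^{2(p+m)+3}_{(0)}$ one has $c_j = O(j^{-(2(p+m)+3)})$, hence $m_j = O(j^{-2m})$, which is exactly the Fourier signature of an $H^m$ function.

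The delicate point is to exhibit a \emph{single} control $v$ that satisfies the simultaneous estimate \eqref{estim_contr_nl} for every $m \in \{-(k+1),\ldots,k\}$ together with the boundary conditions \eqref{eq:weak_bc_nl}. Rather than the $L^2$-minimizing solution (which would depend on $k$), I would set $v = V^{(k+1)}$ and solve for $V$ the moment problem obtained after $k+1$ integrations by parts, whose data are $m_j/(i(\lambda_j-\lambda_1))^{k+1}$ and remain $\ell^2$ with Sobolev weights $j^{2(k+m+1)}$ for every $m \in \{-(k+1),\ldots,k\}$. The Riesz basis then produces $V$ with norm controlled in each of these weighted spaces at once, and the simultaneous estimates on $v = V^{(k+1)}$ follow automatically. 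The extra constraints $u_2(T)=\dots=u_{k+1}(T)=0$, being $k$ linear functionals of $V$, are imposed by solving inside a codimension-$k$ subspace of the Riesz image; likewise the vanishing of $u^{(j)}$ at the endpoints is encoded by requiring $V$ itself to vanish to the appropriate order at $T_0$ and $T$, up to a finite-dimensional correction.

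Finally, the nonlinear passage is obtained by applying the inverse mapping theorem to the $C^1$ map $(\psi_0,u) \mapsto (\psi_0, \P_{\J}[\psi(T;u,\psi_0)])$ defined on a neighborhood of $(\psi_1(T_0),0)$ in the affine slice of $(\S \cap H^{2(p+k)+3}_{(0)}) \times H^k_0(T_0,T)$ encoding \eqref{eq:weak_bc_nl}. Its differential at $(\psi_1(T_0),0)$ is the linearized end-point map, which the previous step shows to be an isomorphism onto its codomain; the inverse mapping theorem then furnishes the $C^1$ control map $\Gamma_{T_0,T}$, and \eqref{estim_contr_nl} at the nonlinear level follows from continuity of the inverse in each topology. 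The main obstacle is the construction at the linear level of a control map delivering \emph{simultaneous} estimates with a \emph{single} control while respecting the chosen boundary conditions; this rules out the standard $L^2$-minimizing choice and forces the construction via iterated primitives and the weighted Riesz basis argument sketched above.
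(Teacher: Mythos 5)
The paper does not prove \cref{linear_STLC}; it is recalled from \cite{B21}, so there is no internal proof to compare against. As a standalone sketch, your outline (linearize, reduce to a trigonometric moment problem, solve via a Riesz sequence and biorthogonal family, pass to the nonlinear system by the inverse mapping theorem) is the correct framework, and you correctly isolate the genuine difficulty: a single control map giving simultaneous estimates for every $m \in \{-(k+1),\ldots,k\}$ while meeting the boundary constraints \eqref{eq:weak_bc_nl}.

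However, the paragraphs devoted to that difficulty are heuristics rather than arguments, and they hide the content of \cite{B21}. First, the claim that ``the Riesz basis then produces $V$ with norm controlled in each of these weighted spaces at once'' is not a consequence of the Riesz property: that property yields an $L^2$ bound on $V$, and extracting the full range of $H^s$ bounds requires a biorthogonal family with explicit $H^s$ bounds together with integrations by parts that are only valid if $V$ already vanishes to order $k$ at both endpoints. Your finite-dimensional correction used to force those endpoint conditions (and the extra conditions on the primitives) must therefore be shown not to spoil any of the $\ell^2$-weighted estimates, and that is not automatic. One should also be more careful about the reality constraint on $v$ (moments for $j$ and $-j$ are conjugate) and the codimension-one constraint coming from $\psi$ lying on the unit sphere, neither of which is addressed. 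Second, transferring the simultaneous linear estimates \eqref{estim_contr_nl} to the nonlinear control map is not ``automatic from continuity of the inverse in each topology'': the inverse function theorem is applied in a single fixed topology, and to show that the resulting control also satisfies the weaker-norm estimates one must track the moment coefficients of the nonlinear remainder and verify a fixed-point-type estimate in each weaker norm. These are precisely the technical contributions of \cite{B21} that your sketch elides.
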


\section{Error estimates on the expansion of the solution}
\label{sec:expansion}
The goal of this section is to compute the power series expansion of the solution $\psi$ of the Schrödinger equation \eqref{Schrodinger} up to order 3 with a sharp error estimate, as it is the key to prove \cref{the_theorem}.  In all this section, if not mentioned, we will work with controls $u$ at least in $H^2_0(0,T)$ and with a dipolar moment $\mu$ satisfying (H$_{\reg})$.

\subsection{Formal expansion of the solution}
Formally, expanding the solution of \eqref{Schrodinger} around the trajectory $(\psi_1, u \equiv 0)$, 
\begin{itemize}
\item the first-order term $\Psi$ of the expansion of $\psi$ is solution of, 
\begin{equation}  
\left\{ 
    \begin{array}{ll}
        i \partial_t \Psi = - \partial^2_x \Psi -u(t)\mu(x) \psi_1(t,x) , \\
        \Psi(t,0) = \Psi(t,1)=0,\\
        \Psi(0,x)=0,
    \end{array}
\right. 
\label{order1} 
\end{equation}
which can be explicitly computed as, 
\begin{equation}
\label{order1explicit}
\Psi(t)=i 
\sum \limits_{j=1}^{+\infty}
\left(
 \langle \mu \varphi_1, \varphi_j\rangle  \int_0^t u(\tau) e^{i (\lambda_j-\lambda_1)\tau} d\tau
 \right)
  \psi_j(t), \quad t \in [0,T].
\end{equation}

\item The second-order term $\xi$ of the expansion of $\psi$ is solution of, 
\begin{equation}  
\left\{ 
    \begin{array}{ll}
        i \partial_t \xi = - \partial^2_x \xi -u(t)\mu(x) \Psi(t,x) , \\
        \xi(t,0) = \xi(t,1)=0,\\
        \xi(0,x)=0,
    \end{array}
\right. 
\label{order2} 
\end{equation}
\item and the third-order term $\zeta$ of the expansion of $\psi$ is solution of, 
\begin{equation}  
\left\{ 
    \begin{array}{ll}
        i \partial_t \zeta = - \partial^2_x \zeta -u(t)\mu(x) \xi(t,x) , \\
        \zeta(t,0) = \zeta(t,1)=0,\\
        \zeta(0,x)=0.
    \end{array}
\right. 
\label{order3} 
\end{equation}
\end{itemize}
The goal of this section is to quantify in which way the following expansion holds rigorously
\begin{equation}
\label{expansion}
\psi \approx \psi_1 + \Psi + \xi + \zeta. 
\end{equation}
Such expansion will be studied under the following asymptotic.
\begin{defi}
\label{def:O}
Given two scalar quantities $A(T,u)$ and $B(T,u)$, we will write $A(T,u)=\O \left( B(T,u) \right)$ if there exists $C, T^*>0$ such that for any $T \in (0, T^*)$, there exists $\eta > 0$ such that for all $u \in H^2_0(0,T)$ with  $\|u\|_{H^2_0(0,T)} <  \eta$, we have $|A(T,u)| \ioe C |B(T,u)|.$ 
\end{defi}
Thus, the notation $\O$ refers to the convergence $\| u\|_{H^2_0(0,T)} \rightarrow 0$ and holds uniformly with respect to the final time on a small time interval $[0, T^*]$. All estimates will be computed under this asymptotic as the goal stated in \cref{the_theorem} is prove $H^2_0$-STLC. 

But first, before computing any estimate, we state a well-posedness result about all the equations considered, which  directly stems from \cref{rem:wp}. 
\begin{prop}
\label{wp_syst_init}
Let $\mu$ satisfying (H$_{\reg}$) and $u$ in $H^2_0( (0,T), \R)$. Then, there exists a unique solution $\psi$ (resp.\ $\Psi$, $\xi$ and $\zeta$) of \eqref{Schrodinger} (resp.\ \eqref{order1}, \eqref{order2} and \eqref{order3}) belonging to $C^2( [0,T], H^7_{(0)}(0,1))$ with $\psi(T)$ (resp.\ $\Psi(T)$, $\xi(T)$ and $\zeta(T)$) in $H^{11}_{(0)}(0,1)$.  Moreover, the following estimate holds,
\begin{equation}
\label{psi_borne}
\| \psi \|_{C^2( [0,T], H^7_{(0)})}
=
\O
\left(
1
\right).
\end{equation}
\end{prop}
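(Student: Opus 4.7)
The plan is simply to apply Remark~\ref{rem:wp} four times, treating each equation in the hierarchy as a special case of the abstract Schrödinger equation with source term $-v(t)\mu(x)\phi$ studied there, and using the output of each step as the forcing datum $\phi$ for the next.

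First, for the nonlinear equation \eqref{Schrodinger}, I would apply Remark~\ref{rem:wp} with $\psi_0 = \varphi_1 \in H^{11}_{(0)}$, $v \equiv 0$ and $\phi \equiv 0$. Since $\varphi_1$ has fixed $H^{11}_{(0)}$-norm and $\|u\|_{H^2_0(0,T)}$ is bounded by assumption, the size estimate \eqref{estim_sol_bis} immediately yields $\psi \in C^2([0,T], H^7_{(0)})$ with $\psi(T) \in H^{11}_{(0)}$ and
\begin{equation*}
\|\psi\|_{C^2([0,T], H^7_{(0)})} \ioe C \|\varphi_1\|_{H^{11}_{(0)}} = \O(1),
\end{equation*}
which proves \eqref{psi_borne}.

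Next, I would construct $\Psi$, $\xi$, $\zeta$ inductively. The first-order equation \eqref{order1} is of the form treated in Remark~\ref{rem:wp} with $\psi_0 = 0$, $v = u$, and $\phi = \psi_1 \in C^2([0,T], H^7_{(0)})$ (where this regularity for $\psi_1$ is trivial since $\psi_1(t) = e^{-i\lambda_1 t}\varphi_1$ with $\varphi_1 \in H^{11}_{(0)}$ and the continuity of multiplication by $\mu$ from Remark~\ref{rem:cont_mu} is not even needed here). This produces $\Psi \in C^2([0,T], H^7_{(0)})$ with $\Psi(T) \in H^{11}_{(0)}$. Using this $\Psi$ as the datum $\phi$ in Remark~\ref{rem:wp}, with $v = u$ and zero initial data, one obtains similarly the existence and regularity of $\xi$. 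One last application of Remark~\ref{rem:wp} with $\phi = \xi$ yields $\zeta$.

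Uniqueness at each step is inherited directly from the uniqueness part of Theorem~\ref{wp} underlying Remark~\ref{rem:wp}. There is no real obstacle here: the only subtle point is that the boundary conditions in (H$_{\reg}$) are exactly what is required for $\mu \varphi \in H^7 \cap H^5_{(0)}$ when $\varphi \in H^7_{(0)}$, as explained in Remark~\ref{rem:cont_mu}, so that the source terms $u \mu \psi_1$, $u \mu \Psi$ and $u \mu \xi$ all live in the correct space $H^2_0((0,T), H^7 \cap H^5_{(0)})$ to apply Remark~\ref{rem:wp}.
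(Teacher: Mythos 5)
Your proposal is correct and takes essentially the same approach as the paper, which states only that this proposition ``directly stems from Remark~\ref{rem:wp}'' and leaves the iterative application (first to the nonlinear equation with zero source, then to \eqref{order1}, \eqref{order2}, \eqref{order3} with source $u\mu\phi$ where $\phi$ is the output of the previous step and the bilinear control in the remark's template is taken to be zero) implicit. You have correctly filled in those steps and identified why (H$_{\reg}$) is the right hypothesis for each source term to have the required regularity.
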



\subsection{An auxiliary system}
\label{an_auxiliary_system}
Our goal is to prove that when $\mu$ satisfies \eqref{lin_nul}, (H$_{\Quad}$) and (H$_{\Cub}$), the leading term of the solution $\psi$ of the Schrödinger equation \eqref{Schrodinger} along the lost direction is namely the cubic term $\int_0^T u_1(t)^2u_2(t)dt$ in the asymptotic given in \cref{def:O}. Thus, we seek to prove that such a cubic term can absorb both the quadratic term and the terms of order higher than four. Therefore, classical error estimates on the expansion \eqref{expansion} involving the $L^2$-norm of the control $u$ are not sharp enough because can't be absorbed by such a cubic term. As in \cref{sec:expansion_dim_finie}, one can compute sharper estimates, involving rather the $L^2$-norm of the time primitive $u_1$ of the control $u$ by introducing the new state
\begin{equation}
\label{link}
\wt{\psi}(t,x):=\psi(t,x) e^{-i u_1(t) \mu(x)}, \quad (t,x) \in [0,T] \times [0,1].
\end{equation}
This new state satisfies the following equation, called the auxiliary system
\begin{equation} 
\label{system_aux}
\left\{ 
    \begin{array}{ll}
        i \partial_t \wt{\psi} = - \partial^2_x \wt{\psi} -iu_1(t) ( 2 \mu'(x) \partial_x \wt{\psi} + \mu''(x) \wt{\psi} )+u_1(t)^2 \mu'(x)^2 \wt{\psi}, \\
        \wt{\psi}(t,0) = \wt{\psi}(t,1)=0,\\
        \wt{\psi}(0,x)=\varphi_1. 
    \end{array}
\right. \end{equation}
\begin{prop}
\label{wp_syst_aux}
Let $\mu$ satisfying (H$_{\reg}$) and $u_1$ in $H^3_0( (0,T), \R)$. There exists a unique solution $\tild{\psi}$ of \eqref{system_aux} in $C^2( [0,T], H^7 \cap H^5_{(0)})$, which satisfies 
\begin{equation}
\label{estim_aux}
\| \tild{\psi} \|_{C^2( [0,T], H^7 \cap H^5_{(0)})}
=
\O
\left(
1
\right).
\end{equation}
Moreover, the following equality holds in $H^5_{(0)}(0,1)$ for every $t \in [0,T]$, 
\begin{equation}
\label{weak_sol_aux}
\tild{\psi}(t)
=
\psi_1(t)
-
\int_0^t
e^{-iA(t-\tau)}
\left(
u_1(\tau)
\left(
2 \mu' \partial_x 
+
\mu''
\right)
\tild{\psi}(\tau)
+
i
u_1(\tau)^2 \mu'^2 \tild{\psi}(\tau)
\right)
d\tau.
\end{equation}
\end{prop}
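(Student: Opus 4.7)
The strategy is to construct $\tilde\psi$ from the already-known solution $\psi$ of the original Schrödinger equation \eqref{Schrodinger} via the pointwise formula \eqref{link}, and to transfer all the desired properties through the change of variables. First, I would apply Proposition~\ref{wp_syst_init} with initial datum $\varphi_1 \in H^{11}_{(0)}$ to obtain a unique $\psi \in C^2([0,T], H^7_{(0)})$ satisfying \eqref{Schrodinger}, with $\|\psi\|_{C^2([0,T], H^7_{(0)})} = \mathcal{O}(1)$ in the asymptotic of Definition~\ref{def:O}. Setting $\tilde\psi(t,x) := \psi(t,x) e^{-i u_1(t) \mu(x)}$, I would check that this $\tilde\psi$ satisfies all three claims of the proposition.

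The regularity $\tilde\psi \in C^2([0,T], H^7 \cap H^5_{(0)})$ together with the bound \eqref{estim_aux} follows from Remark~\ref{rem:cont_mu}: for any $\alpha \in \R$ the multiplier $\varphi \mapsto e^{i\alpha\mu}\varphi$ sends $H^7_{(0)}$ continuously into $H^7 \cap H^5_{(0)}$, and since $u_1 \in H^3_0(0,T) \hookrightarrow C^2([0,T])$, differentiating $\tilde\psi$ once or twice in time produces terms of the form $(\partial_t^j \psi) e^{-i u_1 \mu}$ together with polynomial expressions in $u_1', u_1''$ multiplied by $\mu \tilde\psi$, all of which remain in $H^7 \cap H^5_{(0)}$, again thanks to Remark~\ref{rem:cont_mu}. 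A direct computation with the product and chain rules then checks that $\tilde\psi$ solves \eqref{system_aux}: plugging
\begin{equation*}
\partial_x^2 \tilde\psi = (\partial_x^2 \psi) e^{-i u_1 \mu} - 2 i u_1 \mu' (\partial_x \psi) e^{-i u_1 \mu} - i u_1 \mu'' \tilde\psi - u_1^2 \mu'^2 \tilde\psi
\end{equation*}
and $i \partial_t \tilde\psi = i (\partial_t \psi) e^{-i u_1 \mu} + u \mu \tilde\psi$ into \eqref{Schrodinger} and eliminating $(\partial_x \psi) e^{-i u_1 \mu}$ through $\partial_x \tilde\psi = (\partial_x \psi) e^{-i u_1 \mu} - i u_1 \mu' \tilde\psi$ reproduces exactly \eqref{system_aux}.

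For uniqueness, I would run an energy estimate for the difference $w := \tilde\psi_a - \tilde\psi_b$ of two solutions: $w$ satisfies the same auxiliary equation with $w(0) = 0$, so pairing in $L^2$ and taking imaginary parts kills the self-adjoint $-\partial_x^2$ contribution and the real multiplicative $u_1^2 \mu'^2$ term, while the transport term $-i u_1 (2 \mu' \partial_x w + \mu'' w)$ is controlled after one integration by parts (using the Dirichlet boundary conditions of $w$) by $\|u_1\|_{L^\infty} \|w\|_{L^2}^2$, and Grönwall closes the argument. Finally, \eqref{weak_sol_aux} is the Duhamel formula for the unitary group $(e^{-iAt})_{t\in\R}$ applied to the rewritten equation $\partial_t \tilde\psi + i A \tilde\psi = -u_1 (2\mu' \partial_x + \mu'')\tilde\psi - i u_1^2 \mu'^2 \tilde\psi$. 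By Remark~\ref{rem:cont_mu}, the source term lies in $L^\infty([0,T], H^3_{(0)})$ since $\varphi \mapsto 2\mu'\varphi' + \mu'' \varphi$ maps $H^7 \cap H^5_{(0)}$ into $H^6 \cap H^3_{(0)}$ and $\varphi \mapsto \mu'^2 \varphi$ maps $H^7 \cap H^5_{(0)}$ into itself; the Duhamel integral is then well defined in $H^3_{(0)}$, and the identity actually holds in $H^5_{(0)}$ because its left-hand side $\tilde\psi - \psi_1$ does.

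The main obstacle is purely bookkeeping: one must keep careful track of how much boundary regularity is lost at each multiplication by $\mu$, $\mu'$, $\mu'^2$ or $e^{i\alpha\mu}$, which is exactly what Remark~\ref{rem:cont_mu} (itself relying on (H$_{\reg}$)) quantifies. Once that mapping dictionary is in place, nothing genuinely new happens: everything reduces to the well-posedness of the original Schrödinger equation supplied by Proposition~\ref{wp_syst_init}, together with the algebraic change of variables \eqref{link}.
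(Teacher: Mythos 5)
Your proof is correct and its structure matches the paper's: define $\tilde\psi$ via \eqref{link}, propagate regularity through Remark~\ref{rem:cont_mu}, derive the PDE by a direct change-of-variables computation, and prove uniqueness by an $L^2$-energy estimate. The one place you genuinely diverge is the derivation of \eqref{weak_sol_aux}: you obtain it by differentiating $t\mapsto e^{iAt}\tilde\psi(t)$ in $H^3_{(0)}$ and integrating in time (the ``strong solution gives mild solution'' direction, which is justified here because $\tilde\psi(t)\in H^5_{(0)}\subset\dom A$ so that the map is genuinely differentiable in $H^3_{(0)}$), whereas the paper introduces $\hat\psi$ defined as the right-hand side of \eqref{weak_sol_aux}, uses the smoothing effect of Proposition~\ref{estim_G_Ck} with $(p,k)=(1,2)$ to establish $\hat\psi\in C^2([0,T],H^5_{(0)})$ directly, checks that $\hat\psi$ satisfies \eqref{system_aux} in $H^5\cap H^3_{(0)}$, and then concludes $\hat\psi=\tilde\psi$ by uniqueness. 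Your route is shorter; the paper's route is the same uniqueness-plus-smoothing pattern that it reuses verbatim for each term of the expansion in Section~\ref{expansion_aux}, and it gets the $H^5_{(0)}$-membership of the Duhamel integral directly from the smoothing lemma rather than indirectly by subtraction. One small remark on your uniqueness argument: after a single integration by parts the transport term $-iu_1(2\mu'\partial_x w+\mu''w)$ contributes exactly zero to $\frac{d}{dt}\|w\|_{L^2}^2$, so the $L^2$-norm is conserved and no Gr\"onwall step is needed; this exact conservation is what the paper records.
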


To prove \eqref{weak_sol_aux}, we need to recall the following smoothing effect first proved in \cite{BL10} and then generalized in \cite{B21}.
\begin{prop}
\label{estim_G_Ck}
Let $(p, k) \in \N^2$. There exists a non-decreasing function $C : [0, +\infty) \rightarrow (0, +\infty)$ such that for all $T \soe 0$ and for all $f \in H^{k}_0((0,T), H^{2p+3} \cap H^{2p+1}_{(0)}(0,1))$, the function $G: t \mapsto \int_0^t e^{-iA(t-\tau)} f(\tau) d\tau$ belongs to $C^k( [0,T], H^{2p+3}_{(0)}(0,1))$ with the following estimate,  
\begin{equation}
\| G \|_{C^k( [0,T], H^{2p+3}_{(0)})} \ioe C \| f \|_{H^k((0,T), H^{2p+3} \cap H^{2p+1}_{(0)})}.
\end{equation}
\end{prop}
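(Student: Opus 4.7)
The idea is to exploit the diagonalisation of the semigroup $e^{-iAt}$ in the orthonormal basis $\{\varphi_j\}$ of $A$, in which
\begin{equation*}
\langle G(t),\varphi_j\rangle = e^{-i\lambda_j t}\int_0^t e^{i\lambda_j\tau}\langle f(\tau),\varphi_j\rangle\,d\tau.
\end{equation*}
The rapidly oscillating factor $e^{i\lambda_j\tau}$ will be integrated by parts in $\tau$, producing factors of $\lambda_j^{-1}\sim j^{-2}$. Since the difference between $H^{2p+1}_{(0)}$ and $H^{2p+3}_{(0)}$ amounts essentially to two additional powers of $j$ in the sequence weight, each such time-IBP will be responsible for exactly two of the ``missing'' spatial derivatives in the smoothing claim.

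More precisely, the plan is to perform $k$ successive integrations by parts in $\tau$. The hypothesis $f\in H^k_0((0,T),\cdot)$ gives $f^{(l)}(0)=0$ for $l=0,\ldots,k-1$, so the $\tau=0$ boundary contributions vanish identically. The $\tau=t$ boundary terms become $(i\lambda_j)^{-(l+1)}\langle f^{(l)}(t),\varphi_j\rangle$ for $l=0,\ldots,k-1$, while a remainder $(i\lambda_j)^{-k}\int_0^t e^{i\lambda_j(\tau-t)}\langle f^{(k)}(\tau),\varphi_j\rangle\,d\tau$ is left. Taking the $\ell^2_j$ norm with weight $\lambda_j^{2p+3}$, each boundary contribution is controlled by $\|f^{(l)}(t)\|^2_{H^{\max(2p+1-2l,0)}_{(0)}}$, and Cauchy--Schwarz in $\tau$ handles the remainder. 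Summed up, this should yield the desired estimate on $\|G(t)\|_{H^{2p+3}_{(0)}}$ in terms of $\|f\|_{H^k((0,T),H^{2p+3}\cap H^{2p+1}_{(0)})}$. For higher time derivatives, an induction based on $G'=f-iAG$ together with $f^{(l)}(0)=0$ for $l<m$ yields the identity
\begin{equation*}
G^{(m)}(t)=\int_0^t e^{-iA(t-\tau)}f^{(m)}(\tau)\,d\tau,\qquad 0\le m\le k,
\end{equation*}
so the same argument applied to $f^{(m)}\in H^{k-m}_0$ controls $\|G^{(m)}(t)\|_{H^{2p+3}_{(0)}}$; continuity in $t$ follows from dominated convergence on the $\ell^2$ sum giving the $H^{2p+3}_{(0)}$ norm.

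The main obstacle is the interplay between the final integral remainder and the fact that $f(\tau)$ lies in $H^{2p+3}\cap H^{2p+1}_{(0)}$ but generally not in $H^{2p+3}_{(0)}$: a naive Cauchy--Schwarz on the remainder would demand the full $\|f^{(k)}(\tau)\|_{H^{2p+3}_{(0)}}$, which is not at our disposal since the top-order boundary condition $A^{p+1}f(\tau)|_{\{0,1\}}=0$ is missing. To circumvent this, I would exploit the iterated spatial IBP identity $\langle f(\tau),\varphi_j\rangle=\lambda_j^{-(p+1)}\langle A^{p+1}f(\tau),\varphi_j\rangle$ (which is valid precisely because $A^pf(\tau)$ vanishes on the boundary, a consequence of $f(\tau)\in H^{2p+1}_{(0)}$), and then perform one further integration by parts in $x$ to separate a pure boundary trace contribution, of size $j^{-(2p+3)}$ with $(-1)^j$-type oscillation coming from $A^{p+1}f(\tau)(0)$ and $A^{p+1}f(\tau)(1)$, from a regular part lying in $\ell^2(j)$ with weight $j^{2p+3}$. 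The regular part is absorbed by Cauchy--Schwarz in $\tau$, while the boundary part, once multiplied by $\lambda_j^{(2p+3)/2}$, requires a non-harmonic Fourier / Ingham-type inequality to control $\sum_j|\int_0^t e^{i\lambda_j\tau}\alpha(\tau)\,d\tau|^2$ by $\|\alpha\|^2_{L^2(0,T)}$; this is available thanks to the gap $\lambda_{j+1}-\lambda_j\sim j\to+\infty$, and the traces $\alpha(\tau)=A^{p+1}f(\tau)(0),\,A^{p+1}f(\tau)(1)$ are bounded in $L^2_\tau$ by $\|f\|_{L^2((0,T),H^{2p+3})}$ via the standard trace theorem. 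This last step is where the $H^{2p+3}$ part of the intersection $H^{2p+3}\cap H^{2p+1}_{(0)}$ enters essentially.
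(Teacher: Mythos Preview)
The paper does not supply a proof of this proposition; it simply recalls the statement from \cite{BL10} (case $k=0$) and its extension in \cite{B21}. Your outline is essentially that proof, and it is correct: the reduction $G^{(m)}(t)=\int_0^t e^{-iA(t-\tau)}f^{(m)}(\tau)\,d\tau$ for $0\le m\le k$ (valid because $f^{(l)}(0)=0$ for $l<k$) brings everything back to the case $k=0$; there one writes $\langle f(\tau),\varphi_j\rangle=\lambda_j^{-(p+1)}\langle A^{p+1}f(\tau),\varphi_j\rangle$, performs one further spatial integration by parts to isolate the boundary traces $A^{p+1}f(\tau,0)$, $A^{p+1}f(\tau,1)$, handles the regular remainder by Cauchy--Schwarz in $\tau$, and controls the trace contribution by the Bessel/Ingham inequality $\sum_j\big|\int_0^t e^{i\lambda_j\tau}\alpha(\tau)\,d\tau\big|^2\le C(T)\|\alpha\|_{L^2(0,T)}^2$, available since $\lambda_{j+1}-\lambda_j\to\infty$ (extend $\alpha$ by zero to an interval long enough for Ingham; this also gives the non-decreasing dependence of $C$ on $T$).

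Two remarks on the write-up. First, the $k$ successive \emph{time} integrations by parts you propose in the second paragraph are unnecessary once you have the $k=0$ estimate and the formula for $G^{(m)}$; they are not wrong, but they duplicate work. Second, your statement that ``a naive Cauchy--Schwarz on the remainder would demand the full $\|f^{(k)}(\tau)\|_{H^{2p+3}_{(0)}}$'' is only accurate when $k=0$: after $k\ge1$ time-IBPs the remainder carries an extra $\lambda_j^{-k}$, so Cauchy--Schwarz only asks for $\|f^{(k)}(\tau)\|_{H^{2p+3-2k}_{(0)}}\le\|f^{(k)}(\tau)\|_{H^{2p+1}_{(0)}}$, which you do have. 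The genuine obstacle, and the place where the Ingham argument is indispensable, is precisely the base case $k=0$.
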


\begin{rem}
Because of the term $\partial_x \tild{\psi}$ in \eqref{system_aux}, up to now, the well-posedness of the auxiliary system is only understood through its link \eqref{link} with the Schrödinger equation and is not proved directly using for example a fixed-point argument on the formulation \eqref{system_aux}. However, one needs to be very careful: the multiplication by the exponential factor in \eqref{link} preserves the regularity but not the boundary conditions of $\psi$. More precisely, the continuity of the operators given in \cref{rem:cont_mu} is the key to know which boundary conditions can be deduced for the auxiliary system from the Schrödinger equation and which can't. 
\end{rem}

\begin{proof}[Proof of \cref{wp_syst_aux}]
\medskip \noindent  \emph{Regularity.}  By \cref{wp_syst_init}, under these hypotheses, the solution $\psi$ of the Schrödinger equation \eqref{Schrodinger} is $C^2([0,T], H^7_{(0)})$. Thus, from the continuity given in \cref{rem:cont_mu} on \eqref{cont_exp}, the function $\tild{\psi}$ defined by \eqref{link} is $C^2([0,T], H^7 \cap H^5_{(0)})$. Moreover, \eqref{estim_aux} follows from \eqref{psi_borne}. 

\medskip \noindent \emph{Equation.} For this regularity, the Schrödinger equation \eqref{Schrodinger} is satisfied for every $t$ in $H^5_{(0)}$. 
Thus, 
computations prove that \eqref{system_aux} holds for every $t$ in $H^5 \cap H^3_{(0)}$ using \cref{rem:cont_mu} with \eqref{cont_der} and \eqref{cont_mu_der}. 

\medskip \noindent  \emph{Uniqueness.} For every function $\tild{\psi}$ satisfying the first equation of \eqref{system_aux} for every $t \in [0, T]$ in $H^5 \cap H^3_{(0)}$, an energy estimate proves that its $L^2$-norm is preserved. This implies the uniqueness for solutions in $C^2([0,T], H^7 \cap H^5_{(0)})$ (see \cite[Prop. 4.6 and 4.7]{B21bis} for more details on such an energy estimate). 

\medskip \noindent  \emph{Weak formulation.} Denote by $\hat{\psi}$ the right-hand side of \eqref{weak_sol_aux}. 
The continuity of \eqref{cont_der} and \eqref{cont_mu_der} stated in \cref{rem:cont_mu} imply that the functions integrated in the definition of $\hat{\psi}$ belong to $H^2_0( (0,T), H^5 \cap H^3_{(0)})$. Thus, the smoothing effect stated in \cref{estim_G_Ck} with $(p,k)=(1,2)$ entails that $\hat{\psi}$ belongs to $C^2([0,T], H^5_{(0)}).$ Moreover, computations proves that $\hat{\psi}$ satisfies \eqref{system_aux} for every $t \in [0,T]$ in $H^5 \cap H^3_{(0)}$. Thus, the uniqueness allows to prove \eqref{weak_sol_aux}. 
\end{proof}

\subsection{Computation of the expansion of the auxiliary system}
\label{expansion_aux}
One can compute by hand the expansion of the solution $\tild{\psi}$ of the auxiliary system \eqref{system_aux} around the ground state, up to order 3.

\smallskip \noindent \emph{First-order term.}
 The linearized system of \eqref{system_aux} around the trajectory $(\psi_1, u=0)$ is given by 
\begin{equation}  
\left\{ 
    \begin{array}{ll}
        i \partial_t \wt{\Psi} = - \partial^2_x \wt{\Psi} -iu_1(t) \left( 2 \mu' \partial_x \psi_1 + \mu'' \psi_1 \right) , \\
        \wt{\Psi}(t,0) = \wt{\Psi}(t,1)=0,\\
        \wt{\Psi}(0,x)=0.
    \end{array}
\right. 
\label{order1aux} 
\end{equation}
Linearizing \eqref{link}, $\wt{\Psi}$ is also given by, 
\begin{equation}
\label{link_order1}
\tild{\Psi}(t) 
=
\Psi(t)
-
i 
u_1(t) 
\mu 
\psi_1(t),
\quad 
t 
\in
[0,T],
\end{equation}
where $\Psi$ is the solution of \eqref{order1}. Recall that by \cref{wp_syst_init}, $\Psi$ belongs to $C^2([0,T], H^7_{(0)})$. Thus, using the continuity of \eqref{cont_mu} given in \cref{rem:cont_mu} and \eqref{link_order1} entail that $\tild{\Psi}$ belongs to $C^2([0,T], H^7 \cap H^5_{(0)})$. As before, using  \cref{estim_G_Ck} with $(p,k)=(1, 0)$, the following equality holds in $H^5_{(0)}(0,1)$ for every $t \in [0,T]$, 
\begin{equation}
\label{order1aux_sp}
\tild{\Psi}(t)
=
-
\int_0^t 
e^{-iA(t -\tau)}
u_1(\tau)
\left( 2 \mu' \partial_x \psi_1(\tau) + \mu'' \psi_1(\tau) \right)
d\tau,
\end{equation}
and moreover, the following estimate holds
\begin{equation}
\label{estim_lin_aux}
\| \tild{\Psi} \|_{C^0( [0,T], H^{5}_{(0)}(0,1))}
=
\O
\left(
\| u_1\|_{L^2(0,T)}
\right)
.
\end{equation}
Moreover, the solution of \eqref{order1aux} can be computed explicitly as 
\begin{equation}
\label{expr:order1_aux}
\tild{\Psi}(t)
=
\sum \limits_{j=1}^{+\infty}
\left(
\left(
\lambda_j
-
\lambda_1
\right)
\langle
\mu
\varphi_1
,
\varphi_j 
\rangle
\int_0^t 
u_1(\tau)
e^{
i
(\lambda_j-\lambda_1)
\tau
}
d\tau
\right)
\psi_j(t), 
\quad 
t \in [0,T]. 
\end{equation}
\noindent \emph{Second-order term.}
The second-order term of the expansion of \eqref{system_aux} around the ground state is the solution of 
\begin{equation}  
\left\{ 
    \begin{array}{ll}
        i \partial_t \wt{\xi} = - \partial^2_x \wt{\xi} -iu_1(t) ( 2 \mu' \partial_x \wt{\Psi} + \mu'' \wt{\Psi} ) + u_1(t)^2 \mu'^2 \psi_1 , \\
        \wt{\xi}(t,0) = \wt{\xi}(t,1)=0,\\
        \wt{\xi}(0,x)=0. 
    \end{array}
\right.
\label{order2aux} 
\end{equation}
Identifying the second order terms in \eqref{link}, $\tild{\xi}$ can also be given by, 
\begin{equation}
\label{link_order2}
\tild{\xi}(t) 
= 
\xi(t) 
-
i 
u_1(t) 
\mu 
\tild{\Psi}(t) 
+ 
\frac{u_1(t)^2}{2}
\mu^2 
\psi_1(t).
\end{equation}
Thus, from the regularity of $\xi$ given in \cref{wp_syst_init}, on $\tild{\Psi}$ and the continuity of the operators given in \cref{rem:cont_mu}, $\xi$ is in $C^2([0,T], H^7 \cap H^5_{(0)})$. Moreover, the following equality holds in $H^5_{(0)}(0,1)$ for every $t \in [0,T]$, 
\begin{equation}
\label{order2aux_sp}
\tild{\xi}(t)
=
-
\int_0^t 
e^{-iA(t -\tau)}
\left[
u_1(\tau)
\left( 2 \mu' \partial_x \tild{\Psi}(\tau) + \mu'' \tild{\Psi}(\tau) \right)
+
i 
u_1(\tau)^2 \mu'^2 \psi_1(\tau)
\right]
d\tau.
\end{equation}
As all the integrated terms belongs for $\tau$ fixed to $H^3_{(0)}$, by \cref{rem:cont_mu}, the triangular inequality together with the fact that for every $s \in \R$, $e^{i s A}$ is an isometry from $H^3_{(0)}$ to $H^3_{(0)}$, gives that 
\begin{equation}
\label{estim_quad_aux}
\| \tild{\xi} \|_{C^0( [0,T], H^{3}_{(0)}(0,1))}
=
\O
\left(
\| u_1 \|_{L^1} \| \tild{\Psi}\|_{C^0( [0,T], H^4_{(0)})} + \| u_1\|^2_{L^2}
\right)
=
\O\left(
\|u_1\|^2_{L^2}
\right),
\end{equation}
using \eqref{estim_lin_aux}. Besides, substituting the explicit form of $\tild{\Psi}$ given in \eqref{expr:order1_aux} into \eqref{order2aux_sp}, the solution can be explicitly computed as 
\begin{multline}
\label{expr:order2_aux}
\tild{\xi}(t) 
 =
-i 
\sum \limits_{j=1}^{+\infty} 
\left(
\langle \mu'^2 \varphi_1, \varphi_j \rangle
\int_0^t u_1(\tau)^2
e^{ i \left( \lambda_j - \lambda_1 \right) \tau}
d\tau 
\right)
\psi_j(t)
\\
+
\sum \limits_{j=1}^{+\infty} 
\left(
\int_0^t u_1(\tau) 
\int_0^{\tau} u_1(s) 
\wt{k}_{\Quad, j}(\tau,s)
d\tau ds
\right)
\psi_j(t),
\end{multline}
where, for all $j \in \N^*$, the quadratic kernel $\wt{k}_{quad, j}$ is given by 
\begin{equation}
\label{eq:kernel_quad_aux}
\wt{k}_{\Quad, j}( \tau, s)
:= 
\sum \limits_{n=1}^{+\infty} 
(\lambda_1- \lambda_n) 
(\lambda_n - \lambda_j)
\langle \mu \varphi_1, \varphi_n \rangle
\langle \mu \varphi_n, \varphi_j \rangle
e^{i \left( (\lambda_j-\lambda_n)\tau + (\lambda_n - \lambda_1)s \right)}
.
\end{equation}
Thanks to \cref{decay_coeff}, all the quadratic kernels $\wt{k}_{\Quad, j}$ defined in \eqref{eq:kernel_quad_aux} are bounded in $C^{4}( \R^2, \C)$. This regularity is the key to perform integrations by parts and reveal a coercivity quantified by the $H^{-3}$-norm of the control, as stated in the following result. 
\begin{lem}
\label{coord_quad_IPP}
If the control $u \in L^2(0,T)$ is such that $u_2(T)=u_3(T)=0$, then, for all $j \in \N^*$, 
\begin{equation}
\label{eq:coord_quad_IPP}
\langle \tild{\xi}(T), \psi_j(T) \rangle
=
-i \sum \limits_{p=1}^3 A^p_j \int_0^T u_p(t)^2 e^{ i (\lambda_j - \lambda_1)t } dt
+
\int_0^T u_3(t) \int_0^t u_3(\tau) \partial^2_1 \partial^2_{2}  \wt{k}_{\Quad, j}(t, \tau)  d\tau dt. 
\end{equation}
\end{lem}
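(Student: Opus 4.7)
The plan is to derive \eqref{eq:coord_quad_IPP} from the explicit expansion \eqref{expr:order2_aux} by combining two ingredients: spectral identities that recognize the $A^p_j$'s inside the kernel, and iterated integration by parts that converts primitives of $u$ into derivatives of $\tild{k}_{\Quad,j}$.

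First, taking the scalar product of \eqref{expr:order2_aux} at $t=T$ with $\psi_j(T)$ yields a ``diagonal'' contribution $-i \langle \mu'^2 \varphi_1, \varphi_j\rangle \int_0^T u_1(t)^2 e^{i(\lambda_j-\lambda_1)t}\,dt$ and the double integral $I := \int_0^T u_1(t)\int_0^t u_1(\tau)\tild{k}_{\Quad,j}(t,\tau)\,d\tau\,dt$. I would identify $\langle \mu'^2 \varphi_1,\varphi_j\rangle = A^1_j$ by using the operator identity $[[A,\mu],\mu]=-2\mu'^2$ (a direct one-line computation from $[A,\mu]=-2\mu'\partial_x-\mu''$) and expanding the commutator spectrally via $\mu\varphi_q = \sum_n \langle \mu\varphi_q,\varphi_n\rangle\varphi_n$. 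This yields exactly the $p=1$ term on the right of \eqref{eq:coord_quad_IPP}.

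The core of the proof is the treatment of $I$ by two successive integrations by parts of the same pattern. Writing $u_1 = u_2'$ and integrating once in $\tau$ (using $u_2(0)=0$) then once in $t$ (using $u_2(0)=u_2(T)=0$), with the symmetrization $u_1 u_2 = \tfrac{1}{2}(u_2^2)'$, every boundary term vanishes and one finds
\begin{equation*}
I = \tfrac{1}{2}\int_0^T u_2^2(t)(\partial_2-\partial_1)\tild{k}_{\Quad,j}(t,t)\,dt + \int_0^T u_2(t)\int_0^t u_2(\tau)\partial_1\partial_2\tild{k}_{\Quad,j}(t,\tau)\,d\tau\,dt.
\end{equation*}
Repeating verbatim the same step on the new double integral (with $u_2=u_3'$ and $u_3(0)=u_3(T)=0$) produces $\tfrac{1}{2}\int_0^T u_3^2(t)(\partial_2-\partial_1)\partial_1\partial_2\tild{k}_{\Quad,j}(t,t)\,dt$ plus the final remainder $\int_0^T u_3(t)\int_0^t u_3(\tau)\partial_1^2\partial_2^2\tild{k}_{\Quad,j}(t,\tau)\,d\tau\,dt$. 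It remains to evaluate the two diagonal quantities: differentiating the series \eqref{eq:kernel_quad_aux} termwise, the factor $e^{i(\lambda_j-\lambda_n)t+i(\lambda_n-\lambda_1)\tau}$ produces exactly the powers $(\lambda_j-\lambda_n)^{p-1}$, $(\lambda_n-\lambda_1)^{p-1}$, and the antisymmetric operator $\partial_2-\partial_1$ applied at $\tau=t$ brings down $i(2\lambda_n-\lambda_1-\lambda_j)=2i(\lambda_n-\tfrac{\lambda_1+\lambda_j}{2})$, leaving the exponential $e^{i(\lambda_j-\lambda_1)t}$. Comparison with definition \eqref{def_Apk} (and a careful sign check, where the $(-1)^{p-1}$ is provided by the $(\lambda_1-\lambda_n)$ in the spectral coefficient of $\tild{k}_{\Quad,j}$) gives $(\partial_2-\partial_1)\tild{k}_{\Quad,j}(t,t)=-2iA^2_j e^{i(\lambda_j-\lambda_1)t}$ and $(\partial_2-\partial_1)\partial_1\partial_2\tild{k}_{\Quad,j}(t,t)=-2iA^3_j e^{i(\lambda_j-\lambda_1)t}$, which closes the proof.

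The main obstacle I expect is the bookkeeping: one must carefully check that every boundary term vanishes (both at $t=0$ and $t=T$) and that the symmetric cross-contributions from the $\tau$-IBP and the $t$-IBP combine to cancel the $\partial_1$-part and double the $\partial_2$-part on the diagonal, giving the antisymmetric operator $\partial_2-\partial_1$ whose spectral image is precisely $2i(\lambda_n-\tfrac{\lambda_1+\lambda_j}{2})$. Termwise differentiation of $\tild{k}_{\Quad,j}$ up to fourth order is legitimate thanks to \eqref{coeff_IPP} derived from (H$_{\reg}$): each derivative adds at most a factor $\sim n^2$, while the product $\langle \mu\varphi_1,\varphi_n\rangle\langle \mu\varphi_n,\varphi_j\rangle$ decays like $n^{-14}$, so the relevant series remain absolutely convergent.
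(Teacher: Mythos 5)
Your argument is correct and is essentially the paper's proof. Your intermediate coefficient $\frac{1}{2}(\partial_2-\partial_1)H(t,t)$ is identical to the paper's $\frac{1}{2}\frac{d}{dt}\left(H(t,t)\right)-\partial_1 H(t,t)$ (since $\frac{d}{dt}H(t,t)=\partial_1 H(t,t)+\partial_2 H(t,t)$), and the two successive applications of the IBP identity, the Lie-bracket identification $A^1_j=\langle\mu'^2\varphi_1,\varphi_j\rangle$, and the termwise spectral evaluation of the diagonal quantities as $-iA^p_j e^{i(\lambda_j-\lambda_1)t}$ all match the paper's treatment; the only difference is that you spell out the spectral verification, which the paper merely asserts.
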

\begin{proof}
Let $j \in \N^*$. Thanks to \eqref{LB_A1K}, the computations given in \eqref{expr:order2_aux} give directly
\begin{equation}
\label{quad_non_IPP}
\langle 
\tild{\xi}(T), 
\psi_j(T) 
\rangle
=
-i 
A^1_j
\int_0^T u_1(t)^2
e^{ i \left( \lambda_j - \lambda_1 \right) t}
dt 
+
\int_0^T u_1(t) 
\int_0^{t} u_1(\tau) 
\wt{k}_{\Quad, j}(t,\tau)
d\tau 
dt
.
\end{equation}
Besides, for all $m \in \N$ and $H$ in $C^2(\R^2, \C)$, if $u_{m+1}(T)=0$, integrations by parts lead to
\begin{multline*}
\int_0^T u_m(t) \int_0^t u_m(\tau) H(t, \tau) d\tau dt 
= 
\int_0^T 
u_{m+1}(t)^2 
\left( 
\frac{1}{2} 
\frac{d}{dt}( H(t, t)) 
-  
\partial_1 H(t, t) 
\right) 
dt 
\\
+ 
\int_0^T 
u_{m+1}(t) 
\int_0^t 
u_{m+1}(\tau)  
\partial_1
\partial_{2} 
H(t, \tau) 
d\tau dt
.
\end{multline*}
Therefore, \eqref{eq:coord_quad_IPP} is deduced from \eqref{quad_non_IPP} applying this equality successively for $m=1$ and $H= \wt{k}_{\Quad, j}$ and for $m=2$ and $H= \partial_1 \partial_{2} \wt{k}_{\Quad, j}$ and also noticing that 
\begin{equation*}
\forall p=2, 3,
\quad
\frac{1}{2} 
\frac{d}{dt}
\left( 
\partial_1^{p-2} 
\partial_{2}^{p-2}
\wt{k}_{\Quad, j} (t, t) 
\right) 
-  
\partial_1^{p-1} 
\partial_{2}^{p-2}
\wt{k}_{\Quad, j} (t, t) 
=
-i 
A^p_j
e^{
i
(\lambda_j-\lambda_1)
t
}
.
\end{equation*}
\end{proof}
\noindent In particular, under (H$_{\Quad}$), the quadratic term, along the lost direction, is given by 
\begin{equation}
\label{quad_u3}
\langle \tild{\xi}(T), \psi_K(T) \rangle
=
-i A^3_K \int_0^T u_3(t)^2 e^{ i (\lambda_K - \lambda_1)t } dt
+
\int_0^T u_3(t) \int_0^t u_3(\tau) \partial^2_1 \partial^2_{2}  \wt{k}_{\Quad, K}(t, \tau)  d\tau dt.
\end{equation}
Thus, namely, the leading quadratic term along the lost direction of the expansion is given by $\int_0^T u_3(t)^2 dt$.

\medskip \noindent \emph{Third-order term.}
The third-order term of the expansion of \eqref{system_aux} around the ground state is the solution of 
\begin{equation}  
\left\{ 
    \begin{array}{ll}
        i \partial_t \wt{\zeta} = - \partial^2_x \wt{\zeta} -iu_1(t) \left( 2 \mu' \partial_x \wt{\xi} + \mu'' \wt{\xi} \right) + u_1(t)^2 \mu'^2 \Psi , \\
        \wt{\zeta}(t,0) = \wt{\zeta}(t,1)=0,\\
        \wt{\zeta}(0,x)=0.  
    \end{array}
\right.
\label{order3aux} 
\end{equation}
As before, thanks to \eqref{link}, the cubic term can also be given by 
\begin{equation}
\label{link_order3}
\tild{\zeta}(t) 
= 
\zeta(t)
- 
i 
u_1(t) 
\mu 
\tild{\xi}(t) 
+
\frac{u_1(t)^2}{2} 
\mu^2 
\tild{\Psi}(t)
+
i 
\frac{u_1(t)^3}{6}
\mu^3 
\psi_1(t),
\quad 
t \in [0,T].
\end{equation}
Thus, the cubic term $\tild{\zeta}$ belongs to  $C^2([0,T], H^7 \cap H^5_{(0)})$. Moreover, the following equality holds in $H^5_{(0)}(0,1)$ for every $t \in [0,T]$, 
\begin{equation}
\label{order3aux_sp}
\tild{\xi}(t)
=
-
\int_0^t 
e^{-iA(t -\tau)}
\left[
u_1(\tau)
\left( 2 \mu' \partial_x \tild{\xi}(\tau) + \mu'' \tild{\xi}(\tau) \right)
+
i 
u_1(\tau)^2 \mu'^2 \tild{\Psi}(\tau)
\right]
d\tau,
\end{equation}
and the following estimate holds, using the triangular inequality
\begin{equation}
\label{estim_cub_aux}
\| 
\tild{\zeta} 
\|_{C^0( [0,T], H^{1}_{(0)}(0,1))}
=
\O
\left(
\| u_1\|_{L^2(0,T)}^3
\right)
.
\end{equation}
Using the explicit computations of $\tild{\Psi}$ and $\tild{\xi}$ given in \eqref{expr:order1_aux} and \eqref{expr:order2_aux}, one gets that the third-order term is given by 
\begin{multline}
\label{expr:order3aux}
\tilde{\zeta}(T)  
=
\sum \limits_{j=1}^{+\infty}
\Big(
 i\int_0^T u_1(t)^2 
 \int_0^t u_1( \tau)
 \widetilde{k}_{\Cub,j}^1( t, \tau) 
d\tau dt
 +
 i 
\int_0^T u_1(t)
\int_0^t u_1(\tau)^2 
\widetilde{k}_{\Cub,j}^2( t, \tau) 
d\tau dt
\\
- 
\int_0^T u_1(t) 
\int_0^t u_1(\tau) 
\int_0^{\tau} u_1(s)
\widetilde{k}_{\Cub,j}^3(t, \tau, s ) 
ds d\tau dt
\Big)
\psi_j(T),
\end{multline}
where the cubic kernels are given by
\begin{align}
\label{kernel_cubic_1}
 \widetilde{k}_{\Cub,j}^1( t, \tau) 
 &:=
  \sum \limits_{n=1}^{+\infty}
 (\lambda_1- \lambda_n) 
 \langle \mu \varphi_1, \varphi_n \rangle
  \langle \mu'^2 \varphi_n, \varphi_j \rangle
 e^{i \left[ (\lambda_j- \lambda_n)t + (\lambda_n- \lambda_1) \tau \right] },
 \\
 \label{kernel_cubic_2}
\widetilde{k}_{\Cub,j}^2(t, \tau) 
&:=
\sum \limits_{n=1}^{+\infty} 
(\lambda_n-\lambda_j) 
\langle \mu'^2 \varphi_1, \varphi_n \rangle 
\langle \mu \varphi_n, \varphi_j \rangle 
e^{i \left[  (\lambda_j- \lambda_n)t + (\lambda_n- \lambda_1) \tau \right]},
\end{align}
and
\begin{multline}
\label{kernel_cubic_3}
\widetilde{k}_{\Cub,j}^3(t, \tau, s ) 
:=
\sum \limits_{p=1}^{+\infty} 
\sum \limits_{n=1}^{+\infty} 
(\lambda_1- \lambda_n) 
(\lambda_n - \lambda_p)
(\lambda_p - \lambda_j)
\\
\times 
\langle \mu \varphi_1, \varphi_n \rangle 
\langle \mu \varphi_n, \varphi_p \rangle 
\langle \mu \varphi_p, \varphi_j \rangle 
e^{i \left[ (\lambda_j - \lambda_p)t + (\lambda_p - \lambda_n) \tau + (\lambda_n- \lambda_1)s     \right]}.
\end{multline}
Thanks to \cref{decay_coeff}, the kernels $ \widetilde{k}_{\Cub,j}^1$ and $ \widetilde{k}_{\Cub,j}^2$ are bounded in $C^2(\R^2, \C)$ and the kernel $\widetilde{k}_{\Cub,j}^3$ is bounded in $C^1(\R^2, \C)$.
Formally, in an asymptotic of small time, \eqref{expr:order3aux} entails that the cubic term behaves as $\int_0^T u_1^2(t) u_2(t)dt$ as $ \widetilde{k}_{\Cub,j}^i( t, \tau)  \approx \widetilde{k}_{\Cub,j}^i(0,0)$ for $i=1,2$ at first order and as the third term of \eqref{expr:order3aux} is a higher order cubic term.

\subsection{Sharp error estimates for the auxiliary system}
The goal of this section is to compute sharp error estimates on the expansion of the auxiliary system.

\begin{prop}
\label{prop:error_estim_aux}
If $\mu$ satisfies (H$_{\reg}$), then the following error estimates on the expansion of the auxiliary system hold, 
\begin{align}
\label{eq:estim_lin2_aux}
\| 
\tild{\psi}
-
\psi_1
-
\tild{\Psi}
\|_{L^{\infty}((0,T), H^2_{(0)}(0,1))}
=
\O
\left( 
\| u_1\|_{L^2(0,T)}^2
\right),
\\
\label{eq:estim_lin4_aux}
\| 
\tild{\psi}
-
\psi_1
-
\tild{\Psi}
-
\tild{\xi}
-
\tild{\zeta}
\|_{L^{\infty}((0,T),L^2(0,1))}
=
\O
\left( 
\| u_1\|_{L^2(0,T)}^4
\right).
\end{align}
\end{prop}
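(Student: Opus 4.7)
The plan is to set up the remainders
\begin{equation*}
R_2 := \tild{\psi} - \psi_1 - \tild{\Psi},
\qquad
R_4 := \tild{\psi} - \psi_1 - \tild{\Psi} - \tild{\xi} - \tild{\zeta},
\end{equation*}
derive Duhamel or energy formulations for each by subtracting the weak equations \eqref{order1aux_sp}, \eqref{order2aux_sp} and \eqref{order3aux_sp} from \eqref{weak_sol_aux}, and exploit the cancellations of the low-order powers of $u_1$ to extract the claimed rates.

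For the first estimate, subtracting \eqref{order1aux_sp} from \eqref{weak_sol_aux} and writing $\tild{\psi} - \psi_1 = \tild{\Psi} + R_2$ yields the integral equation
\begin{equation*}
R_2(t) = -\int_0^t e^{-iA(t-\tau)}\Big[u_1\,(2\mu'\partial_x + \mu'')R_2 + u_1\,(2\mu'\partial_x + \mu'')\tild{\Psi} + iu_1^2\,\mu'^2\,\tild{\psi}\Big](\tau)\,d\tau.
\end{equation*}
Using (H$_{\reg}$) together with a direct computation in the spirit of \cref{rem:cont_mu}, the operator $\varphi \mapsto (2\mu'\partial_x + \mu'')\varphi$ maps $H^3_{(0)}$ continuously into $H^2_{(0)}$, so the integrand lies in $H^2_{(0)}$ pointwise in $\tau$. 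Since $e^{-iA\cdot}$ is an isometry on $H^2_{(0)}$, this yields
\begin{equation*}
\|R_2(t)\|_{H^2_{(0)}}
\ioe
C\int_0^t \Big[|u_1|\,\|R_2\|_{H^3_{(0)}} + |u_1|\,\|\tild{\Psi}\|_{H^3_{(0)}} + u_1^2\,\|\tild{\psi}\|_{H^2_{(0)}}\Big](\tau)\,d\tau.
\end{equation*}
The non-$R_2$ contributions are of size $\O(\|u_1\|_{L^2}^2)$ via Cauchy--Schwarz and \eqref{estim_lin_aux}, \eqref{estim_aux}. The self term is the delicate one, because it displays a derivative loss; I would handle it by a one-step bootstrap. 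Running the same Duhamel argument one scale higher, in $H^3_{(0)}$ instead of $H^2_{(0)}$, with the a priori bound $\|R_2\|_{L^\infty(H^5_{(0)})} = \O(1)$ coming from \cref{wp_syst_aux} applied to $\tild{\psi}$, $\psi_1$ and $\tild{\Psi}$, gives a preliminary rate $\|R_2\|_{L^\infty(H^3_{(0)})} = \O(\|u_1\|_{L^2})$. Plugging it back into the $H^2_{(0)}$-inequality above, a Gronwall argument then upgrades the rate to the desired $\O(\|u_1\|_{L^2}^2)$.

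For the second estimate, an $L^2$ energy argument bypasses the derivative loss. Collecting \eqref{system_aux}, \eqref{order1aux}, \eqref{order2aux} and \eqref{order3aux}, the PDE satisfied by $R_4$ can be written
\begin{equation*}
i\partial_t R_4 = AR_4 + L(u_1)R_4 + S_4,
\qquad
L(u_1) := -iu_1(2\mu'\partial_x+\mu'') + u_1^2\mu'^2,
\end{equation*}
with source
\begin{equation*}
S_4 = -iu_1(2\mu'\partial_x + \mu'')\tild{\zeta} + u_1^2\mu'^2\,(\tild{\xi} + \tild{\zeta}).
\end{equation*}
Differentiating $\|R_4\|_{L^2}^2$ and using the self-adjointness of $A$, the antisymmetry of $-i u_1(2\mu'\partial_x+\mu'')$ on $H^1_0$ (by integration by parts with $\mu'(0)=\mu'(1)=0$) and the reality of $u_1^2\mu'^2$, the contribution of $L(u_1)R_4$ cancels and one is left with $\tfrac{d}{dt}\|R_4\|_{L^2} \ioe \|S_4\|_{L^2}$. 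Combining Cauchy--Schwarz with \eqref{estim_lin_aux}, \eqref{estim_quad_aux} and \eqref{estim_cub_aux} yields $\int_0^T \|S_4\|_{L^2}\,d\tau = \O(\|u_1\|_{L^2}^4)$, and the second estimate follows.

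The main obstacle throughout is the one-derivative loss in the bilinear coupling $u_1\cdot 2\mu'\partial_x$. For the $L^2$ estimate of $R_4$ it is neutralized by antisymmetry in the energy identity; for the $H^2_{(0)}$ estimate of $R_2$ it is overcome by the bootstrap above, whose two steps both rely on the high regularity and on the matching boundary conditions of $\mu$ encoded in (H$_{\reg}$) via \cref{rem:cont_mu}.
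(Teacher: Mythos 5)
Your proposal is correct in substance. For \eqref{eq:estim_lin2_aux}, the paper avoids your bootstrap entirely: it first proves the first-order remainder estimate $\|\tild\psi - \psi_1\|_{L^\infty H^3_{(0)}} = \O(\|u_1\|_{L^2})$ and then writes the Duhamel formula for $\tild\psi - \psi_1 - \tild\Psi$ with the bilinear source acting on $\tild\psi - \psi_1$ (not on the new remainder $R_2$), so there is no self-interaction and no regularity loop to close. Your version, splitting the source into an $R_2$ piece and a $\tild\Psi$ piece, reintroduces a self-term with a derivative loss, and the appeal to Gronwall is a red herring: Gronwall cannot close because the self-term lands at $H^3_{(0)}$, not $H^2_{(0)}$. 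What does close is simply the two-step bootstrap you describe (estimate $R_2$ at $H^3_{(0)}$ from the $H^5_{(0)}$ a priori bound, then plug into the $H^2_{(0)}$ Duhamel inequality); the Gronwall mention should just be dropped.

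For \eqref{eq:estim_lin4_aux} you take a genuinely different route. The paper iterates the Duhamel argument one more step down the regularity ladder, first proving $\|\tild\psi - \psi_1 - \tild\Psi - \tild\xi\|_{L^\infty H^1_{(0)}} = \O(\|u_1\|_{L^2}^3)$ and then bounding the fourth-order remainder from a Duhamel formula whose source involves the second- and third-order remainders. Your $L^2$ energy identity bypasses the intermediate cubic-remainder estimate and is cleaner at this level: the coupling $L(u_1)R_4$ indeed drops out of $\frac{d}{dt}\|R_4\|_{L^2}^2$, and $\int_0^T \|S_4\|_{L^2}\,dt = \O(\|u_1\|_{L^2}^4)$ follows from \eqref{estim_lin_aux}, \eqref{estim_quad_aux}, \eqref{estim_cub_aux}. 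One correction to the justification: the operator $-iu_1(2\mu'\partial_x + \mu'')$ is \emph{self-adjoint}, not antisymmetric. The operator $2\mu'\partial_x + \mu''$ is skew-adjoint on $H^1_0$ (and this requires no boundary condition on $\mu'$ --- the boundary terms already vanish because the test functions are in $H^1_0$); multiplying by $-iu_1$ then makes it symmetric, so $\langle L(u_1)R_4, R_4\rangle$ is real and its imaginary part, which is what the energy identity picks up, vanishes. If the operator were skew-adjoint as written, the cancellation would actually fail. The trade-off: the Duhamel chain works at every Sobolev level (which the paper also needs for \eqref{eq:estim_lin2_aux}), whereas the energy argument only applies at the $L^2$ level where the flow is unitary-like.
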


\begin{proof} \emph{Proof of the linear remainder.} We have seen in \cref{wp_syst_aux}, that the following equality holds in $H^5_{(0)}$ for all $t \in [0,T]$, 
\begin{equation*}
\big( \tild{\psi}-\psi_1 \big)(t) 
= 
-\int_0^t 
e^{-iA(t - \tau)} 
\Big( u_1(\tau) 
(2\mu'\partial_x + \mu'')
\tild{\psi}(\tau) 
+ 
i u_1(\tau)^2 \mu'^2 \tild{\psi}(\tau) \Big) 
d\tau,
\end{equation*}
where for $\tau$ fixed, every term under this integral belongs to $H^5 \cap H^3_{(0)}$. Thus, the triangular inequality and the isometry of $e^{i As}$ from $H^3_{(0)}$ to $H^3_{(0)}$ for every $s$ give, 
\begin{multline}
\label{eq:estim_lin1_aux}
\| 
\tild{\psi}-\psi_1 
\|_{L^{\infty}( (0,T), H^3_{(0)})} 
=
\O
\left( 
\|u_1\|_{L^1} \| \tild{\psi} \|_{L^{\infty}( (0,T), H^4_{(0)})} +  \| u_1\|^2_{L^2}  \|\tild{\psi} \|_{L^{\infty}( (0,T), H^3_{(0)})}
\right) 
\\
= \O
\left(  
\| u_1\|_{L^2}
\right),
\end{multline}
using estimate \eqref{weak_sol_aux} on $\tild{\psi}$. Notice that, by Cauchy-Schwarz inequality, we indeed have $\| u_1\|_{L^1}=\O( \| u_1\|_{L^2})$ as the definition of $\O$ given in \cref{def:O} also means we work in small time. 

\smallskip \noindent \emph{Proof of \eqref{eq:estim_lin2_aux}.}
Using \eqref{weak_sol_aux} and \eqref{order1aux_sp}, the following equality holds in $H^5_{(0)}$ for all $t \in [0,T]$, \begin{equation*}
(\widetilde{\psi} - \psi_1 - \widetilde{\Psi})(t) 
= 
- \int_0^t 
e^{-iA(t-\tau)} 
\left(
u_1(\tau) \left(2 \mu' \partial_x  + \mu'' \right)(\widetilde{\psi} -\psi_1)(\tau) 
+ i u_1(\tau)^2 {\mu'}^2 \widetilde{\psi}(\tau)
\right) 
d\tau.
\end{equation*}
Once again, for $\tau$ fixed, every term belongs to $H^5 \cap H^3_{(0)}$ thanks to \cref{rem:cont_mu}, so using once again the triangular inequality, 
\begin{equation*}
\| 
\tild{\psi}
-\psi_1
-\tild{\Psi}
\|_{L^{\infty} H^2_{(0)}} 
=
\O
\left( 
\| u_1 \|_{L^1} 
\| \tild{\psi} - \psi_1 \|_{L^{\infty} H^3_{(0)}} 
+
\| u_1 \|^2_{L^2} 
\| \tild{\psi} \|_{L^{\infty}H^2_{(0)}} 
\right) 
=
\O( \|u_1\|^2_{L^2}),
\end{equation*}
using the estimate \eqref{eq:estim_lin1_aux} on the linear remainder and estimate \eqref{weak_sol_aux} on  $\tild{\psi}$.

\smallskip \noindent \emph{Proof of cubic remainder.} As before, in $H^5_{(0)}$,
\begin{multline*}
(
\widetilde{\psi} 
- \psi_1 
- \widetilde{\Psi}
-\tild{\xi}
)(t) 
\\
= 
- 
\int_0^t 
e^{-iA(t-\tau)} 
\left(
u_1(\tau) 
\left(
2 \mu' \partial_x  + \mu'' 
\right)
(\widetilde{\psi} -\psi_1-\tild{\Psi})(\tau) 
+ 
i 
u_1(\tau)^2
{\mu'}^2 
(\widetilde{\psi}-\psi_1)(\tau)
\right) 
d\tau.
\end{multline*}
And thus, using the triangular inequality, \eqref{eq:estim_lin1_aux} and \eqref{eq:estim_lin2_aux}, one gets
\begin{align}
\notag
\| 
\tild{\psi}
-\psi_1
-\tild{\Psi}
-\tild{\xi}
\|_{L^{\infty} H^1_{(0)}} 
&=
\O
\left( 
\| u_1 \|_{L^1} 
\| 
\tild{\psi} 
- \psi_1 
-\tild{\Psi}
\|_{L^{\infty} H^2_{(0)}} 
+
\| u_1 \|^2_{L^2} 
\| \tild{\psi}- \psi_1 \|_{L^{\infty}H^1_{(0)}} 
\right) 
\\
\label{eq:estim_lin3_aux}
&=
\O( \|u_1\|^3_{L^2}).
\end{align}

\smallskip \noindent \emph{Proof of \eqref{eq:estim_lin4_aux}.} Finally, 
in $H^5_{(0)}$, 
\begin{multline*}
(\widetilde{\psi} - \psi_1 - \widetilde{\Psi}-\tild{\xi} - \tild{\zeta})(t) 
=
- \int_0^t 
e^{-iA(t-\tau)} 
\Big(
u_1(\tau) 
\left(2 \mu' \partial_x  + \mu'' \right)(\widetilde{\psi} -\psi_1-\tild{\Psi}-\tild{\xi})(\tau) 
\\
+ 
i u_1(\tau)^2 {\mu'}^2 (\widetilde{\psi}-\psi_1-\tild{\Psi})(\tau)
\Big) 
d\tau.
\end{multline*}
And thus, using \eqref{eq:estim_lin2_aux} and \eqref{eq:estim_lin3_aux}, one gets \eqref{eq:estim_lin4_aux}.
\end{proof}

In a nutshell, when (H$_{\reg}$), (H$_{\lin}$), (H$_{\Quad}$) and (H$_{\Cub}$) are satisfied, for the expansion of the auxiliary system along the lost direction,
\begin{itemize}
\item by \eqref{quad_u3}, the leading quadratic term is given by $\int_0^T u_3(t)^2dt$,
\item by \eqref{expr:order3aux}, the leading cubic term is given by $\int_0^T u_1(t)^2 u_2(t)dt$,
\item and by \eqref{eq:estim_lin4_aux} among every term of order higher than four, the leading term is given by $(\int_0^T u_1(t)^2 dt)^2$.
\end{itemize}

Therefore, in the asymptotic of controls small in $H^2$, along the lost direction, the cubic term prevails on the linear term (because it vanishes) but also on the quadratic term and on the terms of order higher or equal than four. 
This is why, in the next proposition, we state that along the lost direction, we only keep the dominant cubic term and all other terms are seen as (small) pollution, the bigger pollution being given by the quadratic term. 

Let us stress that, in another asymptotic on controls, for example, in the asymptotic of controls small in $H^3$, this does not hold any more: Gagliardo-Nirenberg inequalities prove that the quadratic term prevails on the cubic term (and on the higher-order terms), and thus one can deny $H^3$-STLC using such a quadratic term as done in \cite{B21bis}. 
\begin{cor}
\label{prop:exp_aux}
Let $\mu$ satisfying (H$_{\reg}$), (H$_{\lin}$), (H$_{\Quad}$). Let $u \in H^2_0(0,T)$ be a control such that $u_2(T)=u_3(T)=0$. Then, the solution $\tild{\psi}$ of the auxiliary system \eqref{system_aux} associated to the initial condition $\varphi_1$ satisfies 
\begin{multline}
\label{dev_aux_varphiK}
\langle
\tild{\psi}
(T)
,
\psi_K(T)
\rangle 
-
 i\int_0^T u_1(t)^2 
 \int_0^t u_1( \tau)
 \widetilde{k}_{\Cub,K}^1( t, \tau) 
d\tau dt
\\
-
 i 
\int_0^T u_1(t)
\int_0^t u_1(\tau)^2 
\widetilde{k}_{\Cub,K}^2( t, \tau) 
d\tau dt
=
\O
\left(
\| u_3 \|^2_{L^2(0,T)}
+
\| u_1\|^3_{L^1(0,T)}
\right)
,
\end{multline}
where we recall that $ \widetilde{k}_{\Cub,K}^1$ and $ \widetilde{k}_{\Cub,K}^2$ are respectively defined by \eqref{kernel_cubic_1} and \eqref{kernel_cubic_2}. 
\end{cor}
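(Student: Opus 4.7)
The plan is to decompose $\langle \tild\psi(T), \psi_K(T)\rangle$ via the expansion established in \cref{prop:error_estim_aux},
\begin{equation*}
\tild\psi(T) = \psi_1(T) + \tild\Psi(T) + \tild\xi(T) + \tild\zeta(T) + R(T),
\qquad
\|R(T)\|_{L^2} = \O(\|u_1\|_{L^2}^4),
\end{equation*}
and to project each term onto $\psi_K(T)$ separately. The zeroth-order contribution vanishes since $K\neq 1$. The linear one, via the closed form \eqref{expr:order1_aux}, is a scalar multiple of $\langle \mu\varphi_1, \varphi_K\rangle$, which is zero under \eqref{lin_nul}. The quadratic one is provided by \cref{coord_quad_IPP}, whose boundary hypotheses $u_2(T)=u_3(T)=0$ are exactly those assumed in the corollary: combined with $A^1_K=A^2_K=0$ from \eqref{quad_nul_1}--\eqref{quad_nul_2}, only the $A^3_K\int u_3^2$ integral and a double $u_3$-integral with kernel $\partial_1^2\partial_2^2 \tild k_{\Quad,K}$ survive, and both are $\O(\|u_3\|_{L^2}^2)$ by Cauchy--Schwarz once the $C^4$-boundedness of $\tild k_{\Quad,K}$ on $\R^2$, which follows from \cref{decay_coeff} applied to \eqref{eq:kernel_quad_aux}, is noted.

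For the cubic contribution, I read off the closed form \eqref{expr:order3aux}: its $\psi_K$-projection is exactly the two iterated integrals that appear subtracted in \eqref{dev_aux_varphiK}, plus a triple iterated integral against the kernel $\tild k^3_{\Cub,K}$, which is bounded on $\R^3$ by \cref{decay_coeff}. This last term is thus controlled in modulus by $C\|u_1\|_{L^1}^3$.

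The only non-routine step concerns the remainder $R(T)$. By Cauchy--Schwarz, $|\langle R(T),\psi_K(T)\rangle| \leq \|R(T)\|_{L^2} = \O(\|u_1\|_{L^2}^4)$, and this must be absorbed into the right-hand side of \eqref{dev_aux_varphiK}. This is the step I expect to be the main (if mild) obstacle, and the key observation is that the assumed boundary conditions support two successive integrations by parts: since $u_1(0)=0$, $u_2(T)=0$, and $u(0)=u(T)=0$ (the latter from $u \in H^2_0(0,T)$),
\begin{equation*}
\|u_1\|_{L^2(0,T)}^2
= -\int_0^T u(t)\, u_2(t)\, dt
= \int_0^T u'(t)\, u_3(t)\, dt
\leq \|u\|_{H^2_0(0,T)}\, \|u_3\|_{L^2(0,T)},
\end{equation*}
so that $\|u_1\|_{L^2}^4 \leq \|u\|_{H^2_0}^2\,\|u_3\|_{L^2}^2 = \O(\|u_3\|_{L^2}^2)$ in the asymptotic of \cref{def:O}, where $\|u\|_{H^2_0}$ is by assumption small. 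Summing the four contributions then yields \eqref{dev_aux_varphiK}.
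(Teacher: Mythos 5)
Your proof is correct and takes essentially the same route as the paper's: decompose via \eqref{eq:estim_lin4_aux}, read off the vanishing of the linear projection from \eqref{expr:order1_aux} and \eqref{lin_nul}, bound the quadratic projection by $\O(\|u_3\|_{L^2}^2)$ via \cref{coord_quad_IPP} under (H$_{\Quad}$), extract the two iterated cubic integrals plus the $\O(\|u_1\|_{L^1}^3)$ triple integral from \eqref{expr:order3aux}, and absorb the quartic remainder $\O(\|u_1\|_{L^2}^4)$ into $\O(\|u_3\|_{L^2}^2)$ by the double integration by parts $\|u_1\|_{L^2}^2=\int_0^T u'u_3$ (valid under $u_2(T)=u_3(T)=0$) followed by Cauchy--Schwarz. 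Your only cosmetic deviation is to invoke $u(0)=u(T)=0$ from $u\in H^2_0$ for the second integration by parts, but this is not actually needed since the boundary term there is $[u\,u_3]_0^T$, which already vanishes from $u_3(0)=u_3(T)=0$; the paper's terser presentation relies on the same cancellations.
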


\begin{proof}
The computations \eqref{expr:order1_aux}, \eqref{quad_u3}, \eqref{expr:order3aux} and the error estimate \eqref{eq:estim_lin4_aux} give that the right-hand side of \eqref{dev_aux_varphiK} is estimated by 
\begin{equation*}
\O
\left(
\| u_3 \|^2_{L^2(0,T)}
+
\| u_1\|^3_{L^1(0,T)}
+
\| u_1\|^4_{L^2(0,T)}
\right).
\end{equation*}
However, for every control $u$ such that $u_2(T)=u_3(T)=0$, integrations by parts and then Cauchy-Schwarz inequality prove that 
\begin{equation*}
\| u_1\|_{L^2}^4 
= 
\left(
\int_0^T u'(t) u_3(t)dt 
\right)^2 
\ioe 
C 
\| u'\|^2_{L^2(0,T)} 
\| u_3\|^2_{L^2(0,T)}
= \O( \| u_3\|^2_{L^2(0,T)}),
\end{equation*}
as we recall we work in the asymptotic of controls small in $H^2_0$ (see \cref{def:O}).
\end{proof}

\subsection{Sharp error estimates for the Schrödinger equation}
From the expansion of the auxiliary system, one can deduce sharp error estimates on the expansion of the solution of the Schrödinger equation \eqref{Schrodinger}. 
\begin{prop}
\label{thm:error_order4}
Let $\mu$ satisfying (H$_{\reg}$). Then,
\begin{align}
\label{estim_r2}
\left\|
\psi
- \psi_1
- \Psi
\right\|_{L^{\infty}( (0,T), L^2(0,1))}
&=
\O 
\left(
\| u_1\|_{L^2(0,T)}^2
+
| u_1(T)|^2
\right),
\\
\label{estim_r4}
\left\|
\psi
- \psi_1
- \Psi
- \xi
-\zeta
\right\|_{L^{\infty}( (0,T), L^2(0,1))}
&=
\O 
\left(
\| u_1\|_{L^2(0,T)}^4
+
| u_1(T)|^4
\right).
\end{align}
\end{prop}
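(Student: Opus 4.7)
The plan is to transfer the already established error estimates for the auxiliary system (\cref{prop:error_estim_aux}) to the original Schrödinger equation by means of the change of unknown \eqref{link}, $\psi = \tild\psi\, e^{iu_1 \mu}$, and the explicit identifications \eqref{link_order1}, \eqref{link_order2}, \eqref{link_order3} between the iterated terms of the two expansions. The quartic Taylor expansion of the exponential factor $e^{iu_1\mu}$ in powers of $u_1$ is what mediates the transfer.

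For \eqref{estim_r2}, I would start from
\begin{equation*}
\psi - \psi_1 - \Psi = \tild\psi\, e^{iu_1\mu} - \psi_1 - \tild\Psi - iu_1\mu\,\psi_1,
\end{equation*}
and regroup it into the three-term decomposition
\begin{equation*}
\bigl(\tild\psi - \psi_1 - \tild\Psi\bigr) + (\tild\psi - \psi_1)\bigl(e^{iu_1\mu} - 1\bigr) + \psi_1\bigl(e^{iu_1\mu} - 1 - iu_1\mu\bigr).
\end{equation*}
The first summand is controlled in $L^2$ by $\O(\|u_1\|_{L^2}^2)$ thanks to \eqref{eq:estim_lin2_aux}. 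The pointwise bounds $|e^{i\theta} - 1| \le |\theta|$ and $|e^{i\theta} - 1 - i\theta| \le \theta^2/2$, combined with $\|\mu\|_{L^\infty} < \infty$ and the linear estimate \eqref{eq:estim_lin1_aux}, make the two remaining summands of sizes $\O(|u_1(t)|\,\|u_1\|_{L^2})$ and $\O(|u_1(t)|^2)$ respectively; Young's inequality absorbs the mixed term.

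For \eqref{estim_r4}, combining all four link relations yields
\begin{equation*}
\psi_1 + \Psi + \xi + \zeta = T_3(iu_1\mu)\,\psi_1 + T_2(iu_1\mu)\,\tild\Psi + T_1(iu_1\mu)\,\tild\xi + T_0(iu_1\mu)\,\tild\zeta,
\end{equation*}
where $T_m(\theta) := \sum_{j=0}^{m} \theta^j/j!$ is the Taylor polynomial of $e^\theta$ of degree $m$. Since $\psi = \tild\psi\, e^{iu_1\mu}$, this gives
\begin{equation*}
\psi - \psi_1 - \Psi - \xi - \zeta = \bigl(\tild\psi - \psi_1 - \tild\Psi - \tild\xi - \tild\zeta\bigr)\,e^{iu_1\mu} + \sum_{k=0}^{3} \phi_k\bigl(e^{iu_1\mu} - T_{3-k}(iu_1\mu)\bigr),
\end{equation*}
with $\phi_0 = \psi_1$, $\phi_1 = \tild\Psi$, $\phi_2 = \tild\xi$, $\phi_3 = \tild\zeta$. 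The main term is $\O(\|u_1\|_{L^2}^4)$ by \eqref{eq:estim_lin4_aux}. The bounds $\|\phi_k\|_{L^2} = \O(\|u_1\|_{L^2}^k)$ coming from \eqref{estim_lin_aux}, \eqref{estim_quad_aux}, \eqref{estim_cub_aux}, together with the Taylor remainder estimate $|e^{i\theta} - T_{3-k}(i\theta)| \le |\theta|^{4-k}/(4-k)!$, control each summand by $\O(\|u_1\|_{L^2}^k |u_1(t)|^{4-k})$, itself dominated by $\|u_1\|_{L^2}^4 + |u_1(t)|^4$ via Young's inequality.

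The delicate final step is to replace, after taking the supremum in $t\in[0,T]$, the intermediate value $|u_1(t)|$ by the endpoint value $|u_1(T)|$ featured in the statement. Since $u\in H_0^2(0,T)$, the identity $u_1(t) = u_1(T) - \int_t^T u(s)\,ds$ with Cauchy--Schwarz, combined with the iterated Poincaré inequality $\|u\|_{L^2} \lesssim T^2 \|u\|_{H^2_0}$ and the small-time asymptotics of \cref{def:O}, allows this substitution and yields \eqref{estim_r2} and \eqref{estim_r4}. This last point is where I expect the bookkeeping to be most delicate, as the reduction must be performed consistently at order two and at order four.
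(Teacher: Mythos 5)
Your decomposition matches the paper's exactly: for \eqref{estim_r4}, the identity $\psi - \psi_1 - \Psi - \xi - \zeta = (\tild\psi - \psi_1 - \tild\Psi - \tild\xi - \tild\zeta)\,e^{iu_1\mu} + \sum_{k=0}^{3} \phi_k\bigl(e^{iu_1\mu} - T_{3-k}(iu_1\mu)\bigr)$, with $\phi_0=\psi_1$, $\phi_1=\tild\Psi$, $\phi_2=\tild\xi$, $\phi_3=\tild\zeta$ and $T_m$ the degree-$m$ Taylor polynomial of the exponential, is the one the paper writes out (after expanding the $T_m$), and the term-by-term control via \eqref{eq:estim_lin4_aux}, \eqref{estim_lin_aux}, \eqref{estim_quad_aux}, \eqref{estim_cub_aux} and pointwise Taylor remainders is identical. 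Your slightly different grouping for \eqref{estim_r2} (pairing $\tild\psi-\psi_1$ with \eqref{eq:estim_lin1_aux} instead of $\tild\Psi$ with \eqref{estim_lin_aux}) is equivalent and correct.

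Your ``delicate final step'' is, however, a genuine gap. You want to bound $\sup_{t\in[0,T]}|u_1(t)|$ by a constant times $|u_1(T)| + \|u_1\|_{L^2(0,T)}$ using $u_1(t) = u_1(T) - \int_t^T u$, Cauchy--Schwarz and Poincar\'e. Poincar\'e only gives smallness of $\|u\|_{L^2}$ in the $H^2_0$-small regime; it does \emph{not} give $\|u\|_{L^2} = \O(|u_1(T)| + \|u_1\|_{L^2})$, and no such comparison exists. For the oscillating family $u(t)=a\,\eps^{-1}g'(t/\eps)$ with $g\in C_c^\infty(0,1)$, one has $u_1(t)=a\,g(t/\eps)$, so $u_1(T)=0$ (as soon as $\eps<T$) while $\|u_1\|_{L^\infty(0,T)}^2/\|u_1\|_{L^2(0,T)}^2 \sim 1/\eps$ is unbounded as $\eps\to0$, even though $\|u\|_{H^2_0(0,T)}\sim a\,\eps^{-5/2}$ can be made as small as desired by choosing $a$ accordingly. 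The paper never attempts this substitution: its proof evaluates the decomposition only at $t=T$, where the exponential factor carries exactly $u_1(T)$, and that endpoint estimate is all that is used downstream (e.g.\ in the proof of \eqref{estim_u1(T)}). So the $L^\infty((0,T),L^2)$ in the statement should be read as the endpoint estimate valid for every final time --- i.e.\ with $|u_1(t)|$ on the right at time $t$ --- and not as a uniform-in-$t$ bound featuring the fixed endpoint $|u_1(T)|$. Replace your last step by this observation and the proof is complete.
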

\begin{proof}
The proof of \eqref{estim_r2} and \eqref{estim_r4} are very similar. Thus, we only prove \eqref{estim_r4}. 
Using all the links \eqref{link}, \eqref{link_order1}, \eqref{link_order2} and \eqref{link_order3} between the expansions of the Schrödinger equation and of the auxiliary system, one gets
\begin{multline*}
\left(
\psi
-\psi_1
-\Psi
-\xi
-\zeta
\right)
(T)
=
e^{i u_1(T) \mu}
(
\tild{\psi}
-\psi_1
-\tild{\Psi}
-\tild{\xi}
-\tild{\zeta}
)
(T)
+
(
e^{i u_1(T) \mu}
- 1
)
\tild{\zeta}(T) 
\\
+ 
(
e^{i u_1(T) \mu}
- 1
-iu_1(T) \mu
)
\tild{\xi}(T) 
+
(
e^{i u_1(T) \mu}
- 1
-iu_1(T) \mu
- \frac{u_1(T)^2}{2} \mu^2
)
\tild{\Psi}(T) 
\\
+ 
(
e^{i u_1(T) \mu}
- 1
-iu_1(T) \mu
- \frac{u_1(T)^2}{2} \mu^2
-i \frac{u_1(T)^3}{6} \mu^3
)
\psi_1(T) .
\end{multline*}
The first term is estimated by $\|u_1\|^4_{L^2}$ thanks to the estimate \eqref{eq:estim_lin4_aux} on the auxiliary system. Doing an expansion of $e^{i u_1(T) \mu}$, the second term (resp.\ the third, fourth and fifth term) is estimated by $|u_1(T)| \| \tild{\zeta} (T) \|_{L^2} $ (resp.\ $|u_1(T)|^2 \| \tild{\xi} (T) \|_{L^2} $, $ |u_1(T)|^3 \| \tild{\Psi} (T) \|_{L^2}$ and $|u_1(T)|^2$). Then, estimates \eqref{estim_lin_aux}, \eqref{estim_quad_aux} and \eqref{estim_cub_aux} on $\tild{\Psi}$, $\tild{\xi}$ and $\tild{\zeta}$ together with Young inequalities lead to \eqref{estim_r4}. 
\end{proof}

To conclude on the error estimate of the expansion of the Schrödinger equation, one needs to estimate the boundary term $u_1(T)$. This can be done for specific motions of the solution. 
\begin{lem}
For every $u$ in $H^2_0(0,T)$ such that the solution of \eqref{Schrodinger} satisfies
\begin{equation}
\label{hyp:motion_spec}
\langle \psi(T; \ u, \ \varphi_1), \varphi_1 \rangle = \langle \psi_1(T), \varphi_1 \rangle,
\end{equation}
then, the following estimate holds
\begin{equation}
\label{estim_u1(T)}
|u_1(T) | =
\O
\left(
\| u_1 \|^2_{L^2(0,T)}
\right). 
\end{equation}
\end{lem}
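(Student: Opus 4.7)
The plan is to combine the sharp second-order error estimate of \cref{thm:error_order4} with the explicit formula \eqref{order1explicit} for the linear term. The key observation is that $u_1(T)$ appears directly, with a nonzero prefactor, as the coordinate of $\Psi(T)$ along $\varphi_1$; under \eqref{hyp:motion_spec}, this coordinate must cancel the remainder of the expansion, pinning $u_1(T)$ down at a quadratic order in $\|u_1\|_{L^2}$.

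Concretely, I would first invoke \eqref{estim_r2} to write $\psi(T) = \psi_1(T) + \Psi(T) + r(T)$ with $\|r(T)\|_{L^2} = \O(\|u_1\|_{L^2(0,T)}^2 + |u_1(T)|^2)$, and then take the $L^2$-hermitian product with $\varphi_1$. Under assumption \eqref{hyp:motion_spec}, the contribution $\langle \psi(T) - \psi_1(T), \varphi_1\rangle$ vanishes, which leaves
\begin{equation*}
\langle \Psi(T), \varphi_1\rangle = -\langle r(T), \varphi_1\rangle.
\end{equation*}
Using $\langle \psi_j(T), \varphi_1\rangle = \delta_{j,1} e^{-i\lambda_1 T}$ in \eqref{order1explicit}, only the $j=1$ mode survives, producing
\begin{equation*}
\langle \Psi(T), \varphi_1\rangle = i\langle \mu\varphi_1, \varphi_1\rangle\, u_1(T)\, e^{-i\lambda_1 T}.
\end{equation*}

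To close the argument, I would observe that (H$_{\lin}$) applied to $j=1\neq K$ in \eqref{H_lin_2} ensures $|\langle \mu\varphi_1, \varphi_1\rangle| \soe c > 0$. Combined with the Cauchy--Schwarz bound $|\langle r(T), \varphi_1\rangle| \ioe \|r(T)\|_{L^2}$, this yields
\begin{equation*}
c\,|u_1(T)| \ioe C\bigl(\|u_1\|_{L^2(0,T)}^2 + |u_1(T)|^2\bigr).
\end{equation*}
Since we work in the asymptotic of \cref{def:O} (small final time, small $\|u\|_{H^2_0}$), the Sobolev embedding $H^2(0,T) \hookrightarrow L^{\infty}(0,T)$ gives $|u_1(T)| \ioe T \|u\|_{L^{\infty}} \to 0$, so the quadratic correction $C|u_1(T)|^2$ can be absorbed into the left-hand side, yielding the claimed bound \eqref{estim_u1(T)}.

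I do not expect a serious obstacle: the proof is essentially mechanical once the sharp remainder bound \eqref{estim_r2} is in hand. The only point to flag is that the argument implicitly relies on $\langle \mu\varphi_1, \varphi_1\rangle \neq 0$; this is not listed among the explicit hypotheses of the lemma but is guaranteed by (H$_{\lin}$), which is the ambient structural assumption throughout this part of the paper.
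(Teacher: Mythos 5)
Your proof is correct and follows essentially the same route as the paper's: project the sharp second-order expansion \eqref{estim_r2} onto $\varphi_1$, use \eqref{order1explicit} to isolate the prefactor $i\langle\mu\varphi_1,\varphi_1\rangle u_1(T)e^{-i\lambda_1 T}$, invoke \eqref{H_lin_2} for the non-vanishing of $\langle\mu\varphi_1,\varphi_1\rangle$, and absorb the $|u_1(T)|^2$ term in the small-control asymptotic of \cref{def:O}. Your flag about the implicit dependence on $(H_{\lin})$ is apt; the paper leaves it as an ambient hypothesis.
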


\begin{proof}
Thanks to the explicit computation of $\Psi$ given in \eqref{order1explicit}, one gets 
\begin{equation*}
\langle \psi(T), \varphi_1 \rangle 
=
\langle \psi_1(T), \varphi_1 \rangle
+ 
i e^{-i\lambda_1 T} \langle \mu \varphi_1, \varphi_1 \rangle u_1(T)
+
\O
\left(
\| 
\left(\psi- \psi_1-\Psi\right)(T)
\|_{L^2(0,1)}
\right)
\end{equation*}
As $\langle \mu \varphi_1, \varphi_1 \rangle \neq 0$ by \eqref{H_lin_2}, assumption \eqref{hyp:motion_spec} together with the estimate \eqref{estim_r2} of the quadratic remainder lead to
\begin{equation*}
u_1(T) 
=
\O 
\left(
\| u_1\|_{L^2(0,T)}^2
+
| u_1(T)|^2
\right).
\end{equation*}
By definition of $\O$ (see \cref{def:O}), we work with controls arbitrary small in $H^2_0$, thus, such estimate entails \eqref{estim_u1(T)}. 
\end{proof}

\begin{cor}
Let $\mu$ satisfying (H$_{\reg}$). Then, for every control $u \in H^2_0(0,T)$ such that the solution of \eqref{Schrodinger} satisfies \eqref{hyp:motion_spec}, the following estimate holds, 
\begin{equation}
\label{remainder_order4}
\left\|
\psi
- \psi_1
- \Psi
- \xi
-\zeta
\right\|_{L^{\infty}( (0,T), L^2(0,1))}
=
\O 
\left(
\| u_1\|_{L^2(0,T)}^4
\right).
\end{equation}
\end{cor}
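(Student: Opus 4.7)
The proof is a direct combination of the two results immediately preceding the statement, so the plan is essentially a bookkeeping argument with no real obstacle.

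First, I would invoke the general quartic error estimate \eqref{estim_r4} from \cref{thm:error_order4}, which applies because $\mu$ satisfies (H$_{\reg}$) and $u \in H^2_0(0,T)$. This gives
\begin{equation*}
\left\|\psi - \psi_1 - \Psi - \xi - \zeta\right\|_{L^{\infty}((0,T),L^2(0,1))} = \O\left(\|u_1\|_{L^2(0,T)}^4 + |u_1(T)|^4\right).
\end{equation*}
All that remains is to absorb the boundary term $|u_1(T)|^4$ into the $L^2$ term.

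This is exactly what the lemma preceding the corollary provides: under the hypothesis \eqref{hyp:motion_spec}, we have $|u_1(T)| = \O(\|u_1\|_{L^2(0,T)}^2)$, hence $|u_1(T)|^4 = \O(\|u_1\|_{L^2(0,T)}^8)$. Since the notation $\O$ (see \cref{def:O}) is defined in the asymptotic $\|u\|_{H^2_0(0,T)} \to 0$ with $T$ restricted to a small interval $(0,T^*)$, we have in particular that $\|u_1\|_{L^2(0,T)}$ is bounded (say by $1$, up to reducing $\eta$), and therefore $\|u_1\|_{L^2(0,T)}^8 \leq \|u_1\|_{L^2(0,T)}^4$. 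Combining these observations yields $|u_1(T)|^4 = \O(\|u_1\|_{L^2(0,T)}^4)$, and substituting this into the estimate above gives \eqref{remainder_order4}.

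The only point worth noting is that one must check that the smallness assumption underlying $\O$ for the lemma (controls small in $H^2_0$) matches the one used in \cref{thm:error_order4}, which is indeed the case by \cref{def:O}. There is no genuine obstacle here; the work has already been done in establishing \eqref{estim_r4} and \eqref{estim_u1(T)}.
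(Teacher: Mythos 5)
Your proof is correct and follows exactly the intended route: combine the quartic remainder estimate \eqref{estim_r4} from \cref{thm:error_order4} with the boundary-term estimate \eqref{estim_u1(T)} from the preceding lemma, then absorb $|u_1(T)|^4 = \O(\|u_1\|_{L^2}^8)$ into $\O(\|u_1\|_{L^2}^4)$ using the smallness built into the asymptotic of \cref{def:O}. The paper leaves the proof implicit precisely because it is this immediate combination, so your argument matches.
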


\subsection{The non overlapping principle}
\label{sec:non_add}
In this paper, it is quite useful to use non overlapping controls as already seen in \cref{sec:toy_models}. Moreover, if $v$ (resp.\ $w$) is a control defined on $(0,T_1)$ (resp.\ on $(0, T_2)$), it would be every convenient to have
\begin{equation*}
\psi( T_1+ T_2; \ v \# w, \ \varphi_1)
=
\psi(T_1; \ v, \ \varphi_1)
+
\psi(T_2; \ w, \ \varphi_1),
\end{equation*}
where recall that the concatenation of two controls is defined in \eqref{concatenation}. However, it is not the case. That is why, in the following section, we estimate precisely the evolution of the solution along the lost direction, to then use it in \cref{sec:STLC_result} for non overlapping controls. 

%

\subsubsection{For the quadratic term}

\begin{prop}
Let $0< T_1 < T_2$. If $\mu$ satisfies \eqref{quad_nul_1} and \eqref{quad_nul_2}, then for all control $u$ in $H^2_0(0, T_2)$ such that $u_1(T_1)=u_2(T_1)=u_3(T_1)=u_2(T_2)=u_3(T_2)=0$, the solution of \eqref{order2} satisfies
\begin{multline}
\label{non_lin_quad}
\left|
\langle 
\xi(T_2; \ u, \ \varphi_1), 
\psi_K(T_2)
\rangle
-
\langle
\xi(T_1; \ u, \ \varphi_1), 
\psi_K(T_1)
\rangle 
\right|
\\
=
\O
\left(
\| u_3\|^2_{L^2(0,T_2)}
+
|u_1(T_2)|
\| u_2\|_{L^1(0,T_2)}
+
| u_1(T_2)|^2
\right).
\end{multline}
\end{prop}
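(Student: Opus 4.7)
My plan is to work first at the level of the auxiliary system, where the identity \eqref{eq:coord_quad_IPP} of \cref{coord_quad_IPP} gives a tractable formula, and then transfer the estimate to $\xi$ via the algebraic relation \eqref{link_order2}.

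\textbf{Step 1: Reduction to the auxiliary quadratic term $\tilde\xi$.} From \eqref{link_order2},
\begin{equation*}
\xi(t) = \tild{\xi}(t) + i u_1(t) \mu \tild{\Psi}(t) - \tfrac{u_1(t)^2}{2} \mu^2 \psi_1(t).
\end{equation*}
Evaluating at $t=T_1$ (where $u_1(T_1)=0$) and at $t=T_2$ and taking the $L^2$-scalar product with $\psi_K$, we obtain
\begin{equation*}
\langle \xi(T_2),\psi_K(T_2)\rangle - \langle \xi(T_1),\psi_K(T_1)\rangle
= \bigl[\langle \tild\xi(T_2),\psi_K(T_2)\rangle - \langle \tild\xi(T_1),\psi_K(T_1)\rangle\bigr] + R,
\end{equation*}
where $R = i u_1(T_2) \langle \mu \tild\Psi(T_2),\psi_K(T_2)\rangle - \tfrac{u_1(T_2)^2}{2}\langle \mu^2\psi_1(T_2),\psi_K(T_2)\rangle.$ The last term is trivially $O(|u_1(T_2)|^2)$. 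For the first, I would use the explicit expansion \eqref{expr:order1_aux} of $\tild\Psi$: the $j$-th coefficient involves $\int_0^{T_2} u_1(\tau) e^{i(\lambda_j-\lambda_1)\tau}d\tau$, which after one integration by parts (legitimate since $u_2(T_2)=0$) becomes $-i(\lambda_j-\lambda_1)\int_0^{T_2} u_2(\tau)e^{i(\lambda_j-\lambda_1)\tau}d\tau$, bounded by $C\lambda_j \|u_2\|_{L^1(0,T_2)}$. Combined with the decay $|\langle \mu\varphi_1,\varphi_j\rangle|=O(j^{-7})$ from \cref{decay_coeff}, the series $\langle\mu\tild\Psi(T_2),\psi_K(T_2)\rangle$ is absolutely convergent and bounded by $C\|u_2\|_{L^1(0,T_2)}$, whence $|R| = O(|u_1(T_2)|\|u_2\|_{L^1(0,T_2)} + |u_1(T_2)|^2)$.

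\textbf{Step 2: Controlling the difference $\langle \tild\xi(T_2),\psi_K(T_2)\rangle - \langle \tild\xi(T_1),\psi_K(T_1)\rangle$.} The boundary assumptions $u_2(T_1)=u_3(T_1)=0$ and $u_2(T_2)=u_3(T_2)=0$ allow me to apply \cref{coord_quad_IPP} at both $T=T_1$ and $T=T_2$ with $j=K$. Since $A^1_K=A^2_K=0$ by \eqref{quad_nul_1}-\eqref{quad_nul_2}, both right-hand sides reduce to
\begin{equation*}
\langle\tild\xi(T_i),\psi_K(T_i)\rangle = -iA^3_K\int_0^{T_i} u_3(t)^2 e^{i(\lambda_K-\lambda_1)t}dt + \int_0^{T_i} u_3(t)\int_0^t u_3(\tau)\,\partial_1^2\partial_2^2\tild k_{\Quad,K}(t,\tau)\,d\tau\,dt,
\end{equation*}
for $i=1,2$. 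Subtracting and using that the kernel $\partial_1^2\partial_2^2 \tild k_{\Quad,K}$ is bounded on $\R^2$ (by \cref{decay_coeff}), Cauchy--Schwarz gives that this difference is $O(\|u_3\|^2_{L^2(0,T_2)})$.

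\textbf{Step 3: Conclusion.} Combining Steps 1 and 2 yields precisely the bound \eqref{non_lin_quad}. There is no real obstacle here: the only mildly delicate point is the integration-by-parts in Step 1 to gain the factor $\|u_2\|_{L^1}$ rather than the weaker $\|u_1\|_{L^1}$, which is essential because the cubic coercivity available along the lost direction can absorb $|u_1(T_2)|\|u_2\|_{L^1}$ but would not absorb $|u_1(T_2)|\|u_1\|_{L^1}$.
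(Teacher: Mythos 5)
Your proof is correct and follows essentially the same route as the paper: decompose $\langle\xi,\psi_K\rangle$ via the algebraic link \eqref{link_order2}, use $u_1(T_1)=0$ to kill the auxiliary terms at $t=T_1$, control the $\tild\xi$ difference via \cref{coord_quad_IPP} (the paper cites the specialization \eqref{quad_u3}), and gain the $\|u_2\|_{L^1}$ factor in the remaining boundary term by a single integration by parts (the paper performs it after summing the series into the kernel $k_{\Quad,T}$ and invoking boundedness of $k_{\Quad,T}'$; you perform it term by term before summing, which is an equivalent, equally valid ordering).
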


\begin{proof}
Using the link with the auxiliary system \eqref{link_order2} and the explicit form of $\tild{\Psi}$ given in \eqref{expr:order1_aux}, one has, for every $T \in [0, T_2]$, 
\begin{equation*}
\langle 
\xi(T), 
\psi_K(T)
\rangle
=
\langle 
\tild{\xi}(T), 
\psi_K(T)
\rangle
-u_1(T) 
\int_0^T 
u_1(t) 
k_{\Quad, T}(t)
dt
-
\frac{u_1(T)^2}{2}
\langle \mu^2 \varphi_1, \varphi_K \rangle
e^{i (\lambda_K - \lambda_1)T},
\end{equation*}
where the quadratic kernel $k_{\Quad, T}$ is given by,
\begin{equation}
\label{eq:kernel_quad}
k_{\Quad, T}(t)
=
\sum 
\limits_{n=1}^{+\infty}
(\lambda_n- \lambda_1)
\langle \mu \varphi_1, \varphi_n \rangle
\langle \mu \varphi_p, \varphi_n \rangle
e^{i [ (\lambda_n - \lambda_1)t + (\lambda_K- \lambda_j)T]}.
\end{equation}
Thus, if the control satisfies $u_1(T_1)=0$, then, 
\begin{multline*}
\langle 
\xi(T_2), 
\psi_K(T_2)
\rangle
-
\langle 
\xi(T_1), 
\psi_K(T_1)
\rangle
=
\langle 
\tild{\xi}(T_2), 
\psi_K(T_2)
\rangle
-
\langle 
\tild{\xi}(T_1), 
\psi_K(T_1)
\rangle
\\
+u_1(T_2)
\int_0^{T_2} 
u_1(t)
k_{\Quad, T_2}(t)
dt
- \frac{u_1(T_2)^2}{2}
\langle \mu^2 \varphi_1, \varphi_K \rangle
e^{i (\lambda_K - \lambda_1)T_2}
.
\end{multline*}
The first term of the right-hand side is estimated by $\O( \| u_3\|^2_{L^2(0,T_2)})$ using the explicit computation of $\tild{\xi}$ given in  \eqref{quad_u3}. The second term of the right-hand side is naturally estimated by 
$\O(|u_1(T_2)|
\| u_1\|_{L^1(0,T_2)})$
as the kernel is bounded. However, such estimate will not be sharp enough to use in the sequel of this paper (and more precisely in the proof of \cref{prop:TV1}. Thus, one can compute a shaper estimate by performing one integration by parts in the integral. This gives that the second term is estimated by 
$\O(|u_1(T_2)|
\| u_2\|_{L^1(0,T_2)})$ 
noticing that $k_{\Quad, T_2}'$ is still bounded thanks to \cref{decay_coeff}.
\end{proof}

\subsubsection{For the cubic term}
\begin{prop}
Let $0 < T_1 < T_2$.  For every control $u$ in $H^2_0(0, T_2)$ such that $u_2(T_1)=u_2(T_2)=0$, the solution of \eqref{order3aux} satisfies
\begin{multline}
\label{non_lin_cub_aux}
\left|
\langle 
\tild{\zeta}(T_2), 
\psi_K(T_2)
\rangle
-
\langle
\tild{\zeta}(T_1), 
\psi_K(T_1)
\rangle 
\right|
\\
=
\O
\left(
\| u_1\|^3_{L^1(0,T_2)}
+
\| u_1\|^2_{L^2(T_1, T_2)}
\| u_1\|_{L^1(0, T_2)}
+
\| u_2\|_{L^{\infty}(T_1, T_2)}
\| u_1\|^2_{L^2(0, T_2)}
\right).
\end{multline}
\end{prop}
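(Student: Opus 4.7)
The plan is to use the explicit formula \eqref{expr:order3aux} for $\tild{\zeta}$, whose kernels \eqref{kernel_cubic_1}, \eqref{kernel_cubic_2}, \eqref{kernel_cubic_3} depend only on $t, \tau, s$ (and not on the final time), and the orthonormality $\langle \psi_j(T), \psi_K(T)\rangle = \delta_{jK}$. Writing
\begin{align*}
I_1(T) &:= \int_0^T u_1(t)^2 \int_0^t u_1(\tau)\,\widetilde{k}^1_{\Cub,K}(t,\tau)\,d\tau\,dt, \\
I_2(T) &:= \int_0^T u_1(t) \int_0^t u_1(\tau)^2\,\widetilde{k}^2_{\Cub,K}(t,\tau)\,d\tau\,dt, \\
I_3(T) &:= \int_0^T u_1(t) \int_0^t u_1(\tau) \int_0^\tau u_1(s)\,\widetilde{k}^3_{\Cub,K}(t,\tau,s)\,ds\,d\tau\,dt,
\end{align*}
we have $\langle \tild{\zeta}(T),\psi_K(T)\rangle = iI_1(T) + iI_2(T) - I_3(T)$, and since only the outer bound depends on $T$,
\begin{equation*}
I_m(T_2) - I_m(T_1) = \int_{T_1}^{T_2} (\ldots)\,dt \quad (m=1,2,3).
\end{equation*}
Since $\widetilde{k}^1_{\Cub,K}$, $\widetilde{k}^2_{\Cub,K}$ and $\widetilde{k}^3_{\Cub,K}$ are bounded (and $C^1$ for the first two) by \cref{decay_coeff}, each of these three differences can be estimated separately.

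For $I_1(T_2) - I_1(T_1)$, the inner integral is bounded by $C\|u_1\|_{L^1(0,T_2)}$, and Cauchy--Schwarz on the outer integral yields
\begin{equation*}
|I_1(T_2) - I_1(T_1)| \leq C\,\|u_1\|^2_{L^2(T_1,T_2)}\,\|u_1\|_{L^1(0,T_2)}.
\end{equation*}
For $I_3(T_2) - I_3(T_1)$, a direct $L^1$ bound on each factor gives
\begin{equation*}
|I_3(T_2) - I_3(T_1)| \leq C\,\|u_1\|_{L^1(T_1,T_2)}\,\|u_1\|^2_{L^1(0,T_2)} \leq C\,\|u_1\|^3_{L^1(0,T_2)}.
\end{equation*}

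The main issue is $I_2(T_2) - I_2(T_1)$: a naive bound would only give $\|u_1\|_{L^1(T_1,T_2)}\|u_1\|^2_{L^2(0,T_2)}$, which is too weak and does not produce the sought $\|u_2\|_{L^\infty(T_1,T_2)}$ factor. The trick is to integrate by parts in the outer variable $t$, using $u = u_2''$. Setting $F(t):=\int_0^t u_1(\tau)^2\,\widetilde{k}^2_{\Cub,K}(t,\tau)\,d\tau$, one has
\begin{equation*}
I_2(T_2) - I_2(T_1) = \bigl[u_2(t) F(t)\bigr]_{T_1}^{T_2} - \int_{T_1}^{T_2} u_2(t) F'(t)\,dt,
\end{equation*}
and the boundary term vanishes thanks to the hypothesis $u_2(T_1)=u_2(T_2)=0$. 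Since
\begin{equation*}
F'(t) = u_1(t)^2\,\widetilde{k}^2_{\Cub,K}(t,t) + \int_0^t u_1(\tau)^2\,\partial_1 \widetilde{k}^2_{\Cub,K}(t,\tau)\,d\tau,
\end{equation*}
and both $\widetilde{k}^2_{\Cub,K}$ and $\partial_1 \widetilde{k}^2_{\Cub,K}$ are bounded, one obtains $|F'(t)| \leq C(u_1(t)^2 + \|u_1\|^2_{L^2(0,T_2)})$, hence
\begin{equation*}
|I_2(T_2)-I_2(T_1)| \leq C\,\|u_2\|_{L^\infty(T_1,T_2)}\Bigl(\|u_1\|^2_{L^2(T_1,T_2)} + (T_2-T_1)\|u_1\|^2_{L^2(0,T_2)}\Bigr),
\end{equation*}
which, since we work in small time (cf.\ \cref{def:O}), is controlled by $\|u_2\|_{L^\infty(T_1,T_2)}\|u_1\|^2_{L^2(0,T_2)}$. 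Combining the three contributions yields the claimed estimate \eqref{non_lin_cub_aux}.
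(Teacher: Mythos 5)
Your proof is correct and follows essentially the same route as the paper's: both split $\langle\tild{\zeta}(T_2),\psi_K(T_2)\rangle-\langle\tild{\zeta}(T_1),\psi_K(T_1)\rangle$ into the three tail integrals over $(T_1,T_2)$ using the explicit kernel formula \eqref{expr:order3aux}, bound the $\widetilde{k}^1$ and $\widetilde{k}^3$ contributions directly by $\|u_1\|^2_{L^2(T_1,T_2)}\|u_1\|_{L^1(0,T_2)}$ and $\|u_1\|^3_{L^1(0,T_2)}$, and then perform the key integration by parts (writing $u_1=u_2'$, with the boundary term killed by $u_2(T_1)=u_2(T_2)=0$) on the $\widetilde{k}^2$ term to produce the $\|u_2\|_{L^\infty(T_1,T_2)}\|u_1\|^2_{L^2(0,T_2)}$ factor. (Minor slip: you write ``using $u=u_2''$'' where what is actually used is $u_1=u_2'$, as your displayed computation correctly shows.)
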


\begin{proof}
Using the explicit form of $\tild{\zeta}$ given in \eqref{expr:order3aux}, one has, 
\begin{multline*}
\langle 
\tild{\zeta}(T_2), 
\psi_K(T_2)
\rangle
-
\langle
\tild{\zeta}(T_1), 
\psi_K(T_1)
\rangle 
=
 i\int_{T_1}^{T_2} u_1(t)^2 
 \int_0^t u_1( \tau)
 \widetilde{k}_{\Cub,K}^1( t, \tau) 
 d\tau dt
\\
+
i 
\int_{T_1}^{T_2} u_1(t)
\int_0^t u_1(\tau)^2 
\widetilde{k}_{\Cub,K}^2( t, \tau) 
d\tau dt
+
\O
\left(
\| u_1\|^3_{L^1(0, T_2)}
\right),
\end{multline*}
as the kernel $\tild{k}_{\Cub, K}^3$ defined in \eqref{kernel_cubic_3} is bounded in $C^0( \R^3, \C)$. The first term of the right-hand side is bounded by $\| u_1\|^2_{L^2(T_1, T_2)} \| u_1\|_{L^1(0, T_2)}$ as $\tild{k}_{\Cub, K}^1$ is also bounded. Moreover, it would seem natural to estimate the second term by $\| u_1\|_{L^1(T_1, T_2)}\| u_1\|_{L^2(0, T_2)}^2.$ However, as before, it would not provide an estimate sharp enough to use in the sequel of the work. One can compute a sharper estimate by performing one integration by parts to get that the second term of the right-hand side is bounded by,  
\begin{multline*}
\left|
\int_{T_1}^{T_2}
u_2(t) u_1(t)^2 
\widetilde{k}_{\Cub,K}^2( t, t)
dt
+
\int_{T_1}^{T_2} u_2(t)
\int_0^t u_1(\tau)^2 
\partial_1 \widetilde{k}_{\Cub,K}^2( t, \tau) 
dt d\tau
\right|
\\
=\O
\left(
\| u_2\|_{L^{\infty}(T_1, T_2)}
\| u_1\|^2_{L^2(0, T_2)}
\right), 
\end{multline*}
as the kernel  $\widetilde{k}_{\Cub,K}^2$ defined in \eqref{kernel_cubic_2} is bounded in $C^1( \R^2, \C)$. 
\end{proof}

\begin{prop}
Let $0 < T_1 < T_2$.  For every control $u$ in $H^2_0(0, T_2)$ such that $u_1(T_1)=u_2(T_1)=u_2(T_2)=0$, the solution of \eqref{order3} satisfies
\begin{multline}
\label{non_lin_cub}
\left|
\langle 
\zeta(T_2), 
\psi_K(T_2)
\rangle
-
\langle
\zeta(T_1), 
\psi_K(T_1)
\rangle 
\right|
=
\O
\Big(
\| u_1\|^3_{L^1(0,T_2)}
+
\| u_1\|^2_{L^2(T_1, T_2)}
\| u_1\|_{L^1(0, T_2)}
\\+
\| u_2\|_{L^{\infty}(T_1, T_2)}
\| u_1\|^2_{L^2(0, T_2)}
+
|u_1(T_2)| \| u_1\|_{L^2(0,T_2)}^2
+
|u_1(T_2)|^3
\Big).
\end{multline}
\end{prop}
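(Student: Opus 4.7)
The plan is to reduce the estimate on $\zeta$ to the already proven estimate \eqref{non_lin_cub_aux} on $\widetilde{\zeta}$ via the explicit relation \eqref{link_order3} and then control the boundary-type correction terms using the previously established size estimates on $\widetilde{\Psi}$ and $\widetilde{\xi}$.

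First, pairing \eqref{link_order3} with $\psi_K(T)$ at a time $T$ yields
\begin{equation*}
\langle \zeta(T), \psi_K(T)\rangle
= \langle \widetilde{\zeta}(T), \psi_K(T)\rangle
+ iu_1(T)\langle \mu \widetilde{\xi}(T),\psi_K(T)\rangle
- \tfrac{u_1(T)^2}{2}\langle \mu^2 \widetilde{\Psi}(T),\psi_K(T)\rangle
- i\tfrac{u_1(T)^3}{6}\langle \mu^3\psi_1(T),\psi_K(T)\rangle.
\end{equation*}
At $T=T_1$, since $u_1(T_1)=0$ by assumption, all three correction terms vanish, so $\langle \zeta(T_1), \psi_K(T_1)\rangle = \langle \widetilde{\zeta}(T_1), \psi_K(T_1)\rangle$. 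Subtracting, the difference $\langle \zeta(T_2), \psi_K(T_2)\rangle - \langle \zeta(T_1), \psi_K(T_1)\rangle$ equals the analogous difference for $\widetilde{\zeta}$, plus exactly the three boundary correction terms evaluated at $T_2$.

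For the main contribution, the hypotheses $u_2(T_1)=u_2(T_2)=0$ are precisely those required for \eqref{non_lin_cub_aux}, so the first piece is bounded by $\mathcal{O}(\|u_1\|^3_{L^1(0,T_2)}+\|u_1\|^2_{L^2(T_1,T_2)}\|u_1\|_{L^1(0,T_2)}+\|u_2\|_{L^\infty(T_1,T_2)}\|u_1\|^2_{L^2(0,T_2)})$. It remains to bound the boundary terms at $T_2$. Using the Cauchy–Schwarz bound $|\langle \mu \widetilde{\xi}(T_2),\psi_K(T_2)\rangle| \leq C\|\widetilde{\xi}(T_2)\|_{L^2}$ and \eqref{estim_quad_aux}, the first boundary correction is $\mathcal{O}(|u_1(T_2)|\,\|u_1\|_{L^2(0,T_2)}^2)$, which is one of the terms appearing in the target bound. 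Similarly, \eqref{estim_lin_aux} gives $|u_1(T_2)^2\langle \mu^2 \widetilde{\Psi}(T_2),\psi_K(T_2)\rangle| = \mathcal{O}(u_1(T_2)^2\|u_1\|_{L^2(0,T_2)})$, and the last term is simply $\mathcal{O}(|u_1(T_2)|^3)$.

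Finally, the cross term $u_1(T_2)^2\|u_1\|_{L^2(0,T_2)}$ is not explicitly in the target bound, so the last step is to absorb it via the elementary AM--GM inequality $u_1(T_2)^2 \|u_1\|_{L^2(0,T_2)} \leq \tfrac{1}{2}(|u_1(T_2)|^3 + |u_1(T_2)|\,\|u_1\|_{L^2(0,T_2)}^2)$, both of whose pieces already appear on the right-hand side of \eqref{non_lin_cub}. Collecting these estimates yields the claim. The only delicate bookkeeping is this last algebraic rearrangement; everything else is the direct transcription of \eqref{link_order3} combined with the bounds from \cref{prop:exp_aux} and its companions—the hard analytic work having been packaged into \eqref{non_lin_cub_aux}.
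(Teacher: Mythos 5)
Your proof is correct and follows essentially the same route as the paper: pair \eqref{link_order3} with $\psi_K$, use $u_1(T_1)=0$ to eliminate the correction terms at $T_1$, apply \eqref{non_lin_cub_aux} for the auxiliary piece, and bound the correction terms at $T_2$. Where the paper unfolds $\langle\mu\widetilde{\xi}(T_2),\psi_K(T_2)\rangle$ and $\langle\mu^2\widetilde{\Psi}(T_2),\psi_K(T_2)\rangle$ into explicit bounded-kernel integrals to read off the bounds, you instead apply Cauchy--Schwarz together with the precomputed size estimates \eqref{estim_quad_aux} and \eqref{estim_lin_aux} and then a small AM--GM rearrangement; this is equivalent bookkeeping for the same argument.
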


\begin{proof}
Using the link with the auxiliary system \eqref{link_order3} and the explicit computations of $\tild{\Psi}$ and $\tild{\xi}$ given in \eqref{expr:order2_aux} and \eqref{expr:order3aux}, one gets, for all $T \in [0, T_2]$, 
\begin{multline*}
\langle 
\zeta(T), 
\psi_K(T)
\rangle
=
\langle 
\tild{\zeta}(T), 
\psi_K(T)
\rangle
+
iu_1(T)
\int_0^T 
u_1(t)^2 
k_{\Cub,T}^1(t) 
dt 
\\
+
i
u_1(T)
\int_0^T 
u_1(t)
\int_0^t 
u_1(\tau)
k_{\Cub,T}^2(t, \tau)
d\tau
dt 
-
\frac{u_1(T)^2}{2}
\int_0^T 
u_1(t) 
k_{\Cub, T}^3(t) 
dt 
\\-
i
\frac{u_1(T)^3}{6}
\langle \mu^3 \varphi_1, 
\varphi_K
\rangle
e^{i (\lambda_K- \lambda_1)T},
\end{multline*}
where the cubic kernels are given by, 
\begin{align*}
k_{\Cub, T}^1(t)
&
:=
-i 
\sum\limits_{n=1}^{+\infty}
\langle 
\mu'^2 
\varphi_1
, 
\varphi_n
\rangle
\langle 
\mu 
\varphi_n 
, 
\varphi_K
\rangle
\int_0^T 
u_1(t)^2
e^{
i
[
( \lambda_n - \lambda_1)t
+
(\lambda_K-\lambda_n)T
]
}
dt
, 
\\
k_{\Cub,T}^2(t, \tau)
&:=
\sum\limits_{p=1}^{+\infty}
\sum\limits_{n=1}^{+\infty}
(\lambda_1- \lambda_n)
(\lambda_n- \lambda_p)
\langle \mu \varphi_1, \varphi_n \rangle
\langle \mu \varphi_n, \varphi_p \rangle
\langle \mu \varphi_p, \varphi_K \rangle
\\
&\times e^{
i 
[
(\lambda_p- \lambda_n)t
+
(\lambda_n-\lambda_1) \tau
+
(\lambda_K-\lambda_p)T
]
}
, 
\\
k_{\Cub, T}^3(t)
&
:=
-
\sum\limits_{n=1}^{+\infty}
(\lambda_n-\lambda_1)
\langle \mu \varphi_1, \varphi_n \rangle
\langle \mu^2 \varphi_n, \varphi_K \rangle 
e^{i 
[
(\lambda_n- \lambda_1)t
+
(\lambda_K-\lambda_n)T
]
}
.
\end{align*}
So, if the control satisfies $u_1(T_1)=0$, one gets
\begin{multline*}
\langle 
\zeta(T_2), 
\psi_K(T_2)
\rangle
-
\langle 
\zeta(T_1), 
\psi_K(T_1)
\rangle
=
\langle 
\tild{\zeta}(T_2), 
\psi_K(T_2)
\rangle
-
\langle 
\tild{\zeta}(T_1), 
\psi_K(T_1)
\rangle
\\
+
iu_1(T_2)
\int_0^{T_2} 
u_1(t)^2 
k_{\Cub, T_2}^1(t) 
dt 
+
i
u_1(T_2)
\int_0^{T_2} 
u_1(t)
\int_0^t 
u_1(\tau)
k_{\Cub, T_2}^2(t, \tau)
d\tau
dt 
\\
-
\frac{u_1(T_2)^2}{2}
\int_0^{T_2} 
u_1(t) 
k_{\Cub, T_2}^3(t) 
dt 
-
i
\frac{u_1(T_2)^3}{6}
\langle \mu^3 \varphi_1, 
\varphi_K
\rangle
e^{
i (\lambda_K- \lambda_1)T_2
}.
\end{multline*}
From the estimate on the auxiliary system \eqref{non_lin_cub_aux} and the boundness of the kernels, one deduces \eqref{non_lin_cub}. 
\end{proof}

From the behaviors of the quadratic and cubic terms given in \eqref{non_lin_quad} and \eqref{non_lin_cub}, from the error estimate \eqref{remainder_order4} and from the estimate \eqref{estim_u1(T)} on the boundary term $u_1(T_1)$, one can deduce the following estimate. 
\begin{thm}
\label{non_linearity}
Let $0 < T_1 < T_2$, $\mu$ satisfying \eqref{lin_nul}, \eqref{quad_nul_1} and \eqref{quad_nul_2}. For every control $u$ in $H^2(0, T_2)$ such that $u_1(T_1)=u_2(T_1)=u_3(T_1)=u_2(T_1)=u_2(T_2)=u_3(T_2)=0$, if the solution of \eqref{Schrodinger} satisfies the specific motion \eqref{hyp:motion_spec}, then, one has
\begin{multline*}
\left|
\langle 
\psi(T_2), 
\psi_K(T_2)
\rangle
-
\langle
\psi(T_1), 
\psi_K(T_1)
\rangle 
\right|
=
\O
\Big(
\| u_3\|^2_{L^2(0, T_2)}
+
\| u_1\|^2_{L^2(0, T_2)}
\|u_2\|_{L^1(0, T_2)}
\\
+
\| u_1\|^3_{L^1(0,T_2)}
+
\| u_1\|^2_{L^2(T_1, T_2)}
\| u_1\|_{L^1(0, T_2)}
+
\| u_2\|_{L^{\infty}(T_1, T_2)}
\| u_1\|^2_{L^2(0, T_2)}
\Big).
\end{multline*}
\end{thm}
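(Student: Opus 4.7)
The plan is to use the order-three expansion of the Schrödinger equation and control each component separately. I would write
\[
\psi(T_i) = \psi_1(T_i) + \Psi(T_i) + \xi(T_i) + \zeta(T_i) + r(T_i), \qquad i = 1, 2,
\]
and evaluate the difference $\langle\psi(T_2),\psi_K(T_2)\rangle - \langle\psi(T_1),\psi_K(T_1)\rangle$ piece by piece. The free term $\psi_1$ is orthogonal to $\psi_K$, and the linear contribution $\langle\Psi(t),\psi_K(t)\rangle$ vanishes identically: by \eqref{order1explicit} it is proportional to $\langle\mu\varphi_1,\varphi_K\rangle$, which is zero under \eqref{lin_nul}. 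Thus only the quadratic, cubic, and quartic remainder differences need to be estimated.

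For the quadratic piece, the non-overlapping estimate \eqref{non_lin_quad} applies directly since the boundary conditions $u_1(T_1)=u_2(T_1)=u_3(T_1)=u_2(T_2)=u_3(T_2)=0$ are exactly those required; it yields
\[
\mathcal{O}\!\left(\|u_3\|_{L^2}^2 + |u_1(T_2)|\,\|u_2\|_{L^1} + |u_1(T_2)|^2\right).
\]
For the cubic piece, \eqref{non_lin_cub} (whose hypotheses are a weakening of ours) gives
\[
\mathcal{O}\!\left(\|u_1\|_{L^1}^3 + \|u_1\|_{L^2(T_1,T_2)}^2 \|u_1\|_{L^1} + \|u_2\|_{L^{\infty}(T_1,T_2)}\|u_1\|_{L^2}^2 + |u_1(T_2)|\,\|u_1\|_{L^2}^2 + |u_1(T_2)|^3\right).
\]
For the remainder at $T_i$, the general expansion bound \eqref{estim_r4} applied on $[0,T_i]$ gives $\mathcal{O}(\|u_1\|_{L^2}^4 + |u_1(T_i)|^4)$; at $T_1$ this collapses to $\mathcal{O}(\|u_1\|_{L^2(0,T_1)}^4)$ since $u_1(T_1)=0$, and at $T_2$ it will collapse similarly after the next step.

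Next, I would eliminate the boundary term $|u_1(T_2)|$. Hypothesis \eqref{hyp:motion_spec} is precisely what is needed to apply \eqref{estim_u1(T)}, giving $|u_1(T_2)|=\mathcal{O}(\|u_1\|_{L^2}^2)$. Substituting, the term $|u_1(T_2)|\|u_2\|_{L^1}$ becomes the second listed term $\|u_1\|_{L^2}^2\|u_2\|_{L^1}$, while $|u_1(T_2)|^2$, $|u_1(T_2)|\|u_1\|_{L^2}^2$, $|u_1(T_2)|^3$, and the remainder term $|u_1(T_2)|^4$ are all $\mathcal{O}(\|u_1\|_{L^2}^4)$ in the small-$H^2_0$ asymptotic of \cref{def:O}.

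Finally, all residual $\mathcal{O}(\|u_1\|_{L^2}^4)$ contributions get absorbed into the leading $\|u_3\|_{L^2}^2$ term by the same integration-by-parts trick used at the end of the proof of \cref{prop:exp_aux}: since $u\in H^2_0(0,T_2)$ and $u_2(T_2)=u_3(T_2)=0$, two integrations by parts give $\|u_1\|_{L^2(0,T_2)}^2=\int_0^{T_2} u'(t)u_3(t)\,dt$, whence by Cauchy--Schwarz $\|u_1\|_{L^2}^4 \leq \|u'\|_{L^2}^2\|u_3\|_{L^2}^2 = \mathcal{O}(\|u_3\|_{L^2}^2)$ by smallness of the control in $H^2_0$. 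Collecting, exactly the five advertised terms appear and no leftover remains. The main obstacle is not analytical but a careful bookkeeping --- all the hard work is already packaged in the non-overlapping estimates \eqref{non_lin_quad} and \eqref{non_lin_cub}, the sharp remainder \eqref{estim_r4}, and the boundary-trace estimate \eqref{estim_u1(T)}; what remains is to verify that the boundary conditions at $T_1$ and $T_2$ are strong enough to trigger all of them simultaneously.
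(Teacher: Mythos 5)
Your proof is correct and follows exactly the route the paper sketches in the sentence immediately preceding the statement: decompose via the third-order expansion, estimate the quadratic and cubic increments by \eqref{non_lin_quad} and \eqref{non_lin_cub} respectively, control the quartic remainder via \eqref{estim_r4}, eliminate the trace $u_1(T_2)$ using \eqref{estim_u1(T)} under \eqref{hyp:motion_spec}, and absorb all residual $\|u_1\|_{L^2}^4$ terms into $\|u_3\|_{L^2}^2$ by the integration-by-parts argument of \cref{prop:exp_aux}. The bookkeeping of which boundary conditions trigger which estimate is exactly as intended; one minor remark is that the hypothesis in the theorem reads $u\in H^2(0,T_2)$ rather than $H^2_0(0,T_2)$, but since $u_3(0)=u_3(T_2)=0$ the two integrations by parts close without needing $u$ to vanish at the endpoints.
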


\section{Motions in the lost directions}
\label{sec:STLC_result}

\subsection{Motions in the lost directions  $\pm i \varphi_K$}
As for the bilinear toy-model \eqref{ODE}, the motions in the $\pm i \varphi_K$ directions are done in two steps.
\begin{itemize}
\item First, we initiate the motion along $\pm i \varphi_K$ by noticing that when $\mu$ satisfies (H$_{\lin}$), (H$_{\Quad}$) and (H$_{\Cub}$), along $i \varphi_K$, the solution is mainly driven by the cubic term $\int_0^T u_1(t)^2 u_2(t)dt$ which enables us to move in both $+$ and $- i \varphi_K$ directions. This work is done in \cref{partial_tv_lambda}. At this end of this first step, along the other directions $(\varphi_j)_{j \in \N^* - \{K\}}$, the error is possibly `big' (in a sense to precise).
\item Then, this error is corrected using the exact local controllability in projection result given in \cref{linear_STLC}. However, one needs to make sure that such `linear' motions don't induce a too large error along $i \varphi_K$ and preserve the work done in the first step. This is done \cref{prop:TV1}. 
\end{itemize}

\begin{prop}
\label{partial_tv_lambda}
For all $T>0$, there exists $C, \rho>0$ and a continuous map $b \mapsto u_b$ from $\R$ to $H^2_0(0,T)$ such that,
\begin{equation}
\label{eq:dev_K}
\forall b \in (-\rho, \rho), 
\quad
\left|
\langle \psi(T; \ u_b, \ \varphi_1), \psi_K(T) \rangle 
-
i 
b
\right|
\ioe 
C
|b|^{1+\frac{1}{41}}
.
\end{equation}
Moreover, for all $p \in [1, +\infty]$ and $k \in \Z$, $k \geq -3$, there exists $C>0$ such that, 
\begin{equation}
\label{eq:size_controls}
\forall b \in (-\rho, \rho), 
\quad
\| u_b \|_{W^{k, p}(0,T)} 
\ioe
C
|b|^{
\frac{1}{41}
(
7
-
4k
+
\frac{4}{p}
)
}
,
\end{equation}
and for all $\eps \in (0, \frac{3}{4})$, there exists $C>0$ such that for all $b \in (-\rho, \rho)$, 
\begin{equation}
\label{estim_valeur_finale}
\left\| 
\psi(T; \ u_b, \ \varphi_1) 
- 
\psi_1(T)
\right\|_{H^{2m+7}_{(0)}(0,1)}
\ioe 
C 
|b|^{ 
\frac{1}{41}
(10 -4m - 4\eps)
}
,
\quad 
\forall 
m=-3, \ldots, 2
. 
\end{equation}
\end{prop}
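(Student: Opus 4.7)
The plan is to mimic the construction from the finite-dimensional toy model (Section~3.4) and set
\begin{equation*}
u_b(t) := \sign(b)\,|b|^{7/41}\,\phi^{(3)}\!\left(t/|b|^{4/41}\right),
\end{equation*}
where $\phi \in C_c^\infty(0,1)$ is chosen so that $\int_0^1 \phi''(\theta)^2 \phi'(\theta)\,d\theta = 1/C_K$; this normalisation is available because $C_K \neq 0$ by (H$_{\Cub}$). For $|b|$ small, $\supp u_b \subset (0,|b|^{4/41}) \Subset (0,T)$, so $u_b \in C_c^\infty(0,T) \subset H^2_0(0,T)$, and a direct computation using $\phi\in C_c^\infty(0,1)$ shows $u_1(T) = u_2(T) = u_3(T) = 0$. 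The primitives satisfy $u_n(t) = \sign(b)|b|^{(7+4n)/41}\phi^{(3-n)}(t/|b|^{4/41})$ for $n=1,2,3$, and the change of variable $t = |b|^{4/41}\theta$ immediately produces \eqref{eq:size_controls}. In particular $\|u_b\|_{H^2_0} = \O(|b|^{1/41})$, from which continuity of $b \mapsto u_b$ (with $u_0 := 0$) follows.

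To establish \eqref{eq:dev_K}, I would use that $u_1(T)=0$ implies $\tild\psi(T) = \psi(T)$ via \eqref{link}, and apply \cref{prop:exp_aux} to write
\begin{equation*}
\langle \psi(T), \psi_K(T) \rangle = i\!\int_0^T\! u_1^2\!\int_0^t\! u_1(\tau) \wt{k}^1_{\Cub,K}(t,\tau)\,d\tau\,dt + i\!\int_0^T\! u_1\!\int_0^t\! u_1(\tau)^2 \wt{k}^2_{\Cub,K}(t,\tau)\,d\tau\,dt + \O\bigl(\|u_3\|^2_{L^2} + \|u_1\|^3_{L^1}\bigr).
\end{equation*}
The $C^1$-kernels agree with their values at $(0,0)$ up to $\O(|b|^{4/41})$ on the support of $u_b$, and an integration by parts in the second cubic integral (valid since $u_2(0)=u_2(T)=0$) yields $\int_0^T u_1\int_0^t u_1^2 = -\int_0^T u_1^2 u_2$. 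The leading contribution is therefore
\begin{equation*}
i\bigl(\wt{k}^1_{\Cub,K}(0,0) - \wt{k}^2_{\Cub,K}(0,0)\bigr)\int_0^T u_1(t)^2 u_2(t)\,dt.
\end{equation*}
Comparing \eqref{kernel_cubic_1}--\eqref{kernel_cubic_2} with \eqref{cub_non_nul} (and using that $\mu'^2$ is real-valued so the relevant inner products are symmetric) identifies the bracket with $C_K$, while the change of variables gives $\int_0^T u_1^2 u_2\,dt = b/C_K$, so the principal term is exactly $ib$. The remainder is bounded by $\O\bigl(\|u_3\|_{L^2}^2 + \|u_1\|_{L^1}^3 + |b|^{4/41}\cdot|b|\bigr) = \O(|b|^{42/41})$, giving \eqref{eq:dev_K}.

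For \eqref{estim_valeur_finale}, the strategy is to split $\psi(T) - \psi_1(T) = \Psi(T) + R(T)$ with $R := \psi - \psi_1 - \Psi$ and treat the two pieces separately. Writing $u_b = g_b^{(3)}$ with $g_b(t) := \sign(b)|b|^{19/41}\phi(t/|b|^{4/41})$ and performing three integrations by parts yields the Fourier-side estimate
\begin{equation*}
|\widehat{u_b}(\alpha)| \leq C_N\,\alpha^3\,|b|^{23/41}(1+\alpha\,|b|^{4/41})^{-N} \qquad \forall\, N\in\N,
\end{equation*}
which, combined with $|\langle \mu\varphi_1, \varphi_j\rangle| \lesssim j^{-7}$ from \cref{decay_coeff} and \eqref{order1explicit}, gives after a dyadic split at $j_* \sim |b|^{-2/41}$ the sharp bound $\|\Psi(T)\|_{H^{2m+7}_{(0)}} \lesssim |b|^{(10-4m)/41}$. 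For the remainder $R$, observe that $R$ solves $i\partial_t R = -\partial^2_x R - u\mu R - u\mu\Psi$ with $R(0)=0$, so \cref{rem:wp} applied to this inhomogeneous system yields $\|R\|_{L^\infty H^{11}_{(0)}} \lesssim \|u_b\|_{H^2_0}\|\Psi\|_{C^2 H^7_{(0)}} \lesssim |b|^{2/41}$, while \cref{thm:error_order4} together with $u_1(T)=0$ gives $\|R\|_{L^\infty L^2} = \O(|b|^{26/41})$. Interpolating between these two scales yields the $H^{2m+7}_{(0)}$-bound with the $\eps \in (0, 3/4)$ absorbing the loss inherent to interpolation.

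The main obstacle is the last step: the sharp $L^2$-level error estimates of \cref{sec:expansion} must be lifted to the higher Sobolev norms $H^{2m+7}_{(0)}$ uniformly in $m \in \{-3,\dots,2\}$ without destroying the delicate $|b|$-scaling. Rather than rebuilding the expansion at every regularity level, I rely on the combination of a sharp $L^2$ remainder and a refined $H^{11}_{(0)}$ bound obtained by viewing $R$ as the solution of its own Schrödinger equation with source $u\mu\Psi$; the $\eps$-slack appearing in \eqref{estim_valeur_finale} is precisely the loss that this interpolation forces.
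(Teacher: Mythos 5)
Your proposal is correct, and the construction of the oscillating controls, the size estimates \eqref{eq:size_controls}, and the derivation of \eqref{eq:dev_K} via the auxiliary-system expansion (\cref{prop:exp_aux}) follow the paper's argument essentially verbatim — including the observation that $\wt{k}^1_{\Cub,K}(0,0) - \wt{k}^2_{\Cub,K}(0,0) = C_K$, which the paper also establishes by the integration by parts $\int_0^1 \phi'' \left(\int_0^\theta \phi''^2\right) = -\int_0^1 \phi' \phi''^2$. Where you deviate is the end-point estimate \eqref{estim_valeur_finale}. For $\Psi(T)$, the paper integrates by parts exactly $k$ times, interpolates in $k \in [-3,3]$, and then takes $k = m + \tfrac14 + \eps$ so that the resulting series $\sum j^{4m-4k}$ converges — this is the sole source of the $\eps$-loss in the statement. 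You instead use the full rapid decay $|\hat{\phi}(\beta)| \lesssim_N (1+|\beta|)^{-N}$ of the bump function, which after a dyadic split at $j_* \sim |b|^{-2/41}$ yields the slightly sharper $\|\Psi(T)\|_{H^{2m+7}_{(0)}} \lesssim |b|^{(10-4m)/41}$ with no $\eps$. For the remainder $R = \psi - \psi_1 - \Psi$, the paper invokes the external sharp bilinear bound of \cite[Prop.\ 4.5]{B21}, namely $\|R(T)\|_{H^{2m+7}_{(0)}} \lesssim \|u_b\|_{H^m}\|u_b\|_{H^2} \lesssim |b|^{(10-4m)/41}$, whereas you instead interpolate in space between the $L^2$-level error estimate \eqref{estim_r2} ($\lesssim |b|^{26/41}$) and an $H^{11}_{(0)}$ well-posedness bound from \cref{rem:wp} ($\lesssim |b|^{2/41}$). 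Both routes close the argument: your interpolated exponent $(118-48m)/451$ satisfies $(118-48m)/451 \geq (10-4m-4\eps)/41$ for every $m \leq 2$ and every $\eps>0$, so the required bound holds. The one remark I would correct is your attribution of the $\eps$-slack: it is not forced by your interpolation — your combined bound is in fact at least as strong as $|b|^{(10-4m)/41}$ — but rather appears in the paper because of its interpolation \emph{in the number of integrations by parts} for the linear term. Your version is more self-contained (it avoids citing \cite[Prop.\ 4.5]{B21}) and marginally sharper, at the cost of a somewhat longer argument for $\Psi$.
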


\begin{proof}
Let $T>0$ and $\rho \in (0, T^{\frac{41}{4}})$. For all $b \in \R^*$, we define the control $u_b$ by,
\begin{multline}
\label{eq:contr_oscill}
\forall t \in [0,T], 
\quad
u_b(t) 
:=
\sign(b)
|b|^{\frac{7}{41}} 
\phi^{(3)} 
\left( 
\frac{t}{
|b|^{
\frac{4}{41}
}
}
\right)
\\
\text{ where }
\quad
\phi \in C_c^{\infty}(0,1)
\quad
\text{ such that} 
\
\int_0^1 \phi''( \theta)^2 \phi'(\theta) d\theta=\frac{1}{C_K},
\end{multline}
where $C_K$ is defined in \eqref{cub_non_nul}. 
Notice that for all $b \in (-\rho, \rho)$, $u_b$ is supported on $(0, |b|^{\frac{4}{41}}) \subset (0,T)$. 

\smallskip \noindent \emph{Size of the control.} Let $p \in [1, +\infty]$ and $k \in \Z$, $k \geq -3$. Recall that, if $k$ is negative, we still write $u^{(k)}$ to denote $u_{|k|}$ the $|k|$-th primitive of $u$. For $k >0$, by Poincaré's inequality, there exists $C>0$ such that $\| u_b \|_{W^{k,p}} \ioe C \| u_b^{(k)} \|_{L^p}$. For $k<0$, it holds by definition \eqref{def_norm_faible} of the negative norms. Thus, by definition \eqref{eq:contr_oscill} and performing the change of variables 
$t = \llambda \theta$, 
one has, for all $b \in (-\rho, \rho)$, $b \neq 0$, 
\begin{align*}
\| u_b \|_{W^{k,p}(0,T)}^p
&\ioe 
%
C
\int_0^{\llambda}
\left|
|b|^{
\frac{1}{41}
(
7-4k
)
}
\phi^{(3+k)} \left( \frac{t}{\llambda} \right)
\right|^p 
dt 
\\
&\ioe
C
\|
\phi^{(3+k)}
\|_{L^p(0,1)}^p
|b|^{
\frac{1}{41}
(
p(
7-4k
)
+
4
)
}
.
\end{align*}

\smallskip \noindent \emph{Continuity of the map $b \mapsto u_b$.} The continuity from $\R^*$ to $H^2_0(0,T)$ directly stems from the dominated convergence theorem. Moreover, the size estimate \eqref{eq:size_controls} with $k=p=2$ gives the existence of $C>0$ such that, 
\begin{equation*}
\forall b \in (-\rho, \rho), \ b \neq 0, 
\quad
\| u_b \|_{H^2_0(0,T)}
\ioe 
C
|b|^{
\frac{1}{41}
}.
\end{equation*}
Thus, the map $b \mapsto u_b$ can be continuously extended at zero with $u_0=0$. 

\smallskip \noindent \emph{Expansion of the solution.}
As $u_1(T)=0$, by \eqref{link}, the end-point of $\psi$ and $\tild{\psi}$ are the same. Thus, it suffices to prove \eqref{eq:dev_K}  with $\tild{\psi}$ instead of $\psi$. Moreover, \cref{prop:exp_aux} gives the following expansion of order 3 of the auxiliary system, when $b$ goes to zero, 
\begin{multline}
\label{calcul_cub}
\Big|
\langle
\tild{\psi}
(T; \ u_b, \ \varphi_1)
,
\psi_K(T)
\rangle 
-
i\int_0^T u_1(t)^2 
\int_0^t u_1( \tau)
\widetilde{k}_{\Cub,K}^1( t, \tau) 
d\tau dt 
\\
-
i 
\int_0^T u_1(t)
\int_0^t u_1(\tau)^2 
\widetilde{k}_{\Cub,K}^2( t, \tau) 
d\tau dt
\Big|
\ioe
C
|b|^{\frac{42}{41}}
,
\end{multline}
as by \eqref{eq:size_controls}, one has the following estimates:
$
\| u_3\|^2_{L^2} 
\ioe C |b|^{\frac{42}{41}}
$
and 
$
\| u_1\|_{L^1}^3
\ioe
C |b|^{\frac{45}{41}}.
$
Then, substituting the explicit form of the control \eqref{eq:contr_oscill} and performing the change of variables $(t, \tau)=(\llambda \theta_1, \llambda \theta_2)$, the  two integral terms of \eqref{calcul_cub} are given by 
\begin{multline*}
i
\sign(b)
|b|
\Big(
\int_0^1
\phi''(\theta_1)^2
\int_0^{\theta_1}
\phi''(\theta_2)
\tild{k}_{\Cub,K}^1(\llambda \theta_1, \llambda \theta_2)
d\theta_2 \theta_1
\\
+
\int_0^1
\phi''(\theta_1)
\int_0^{\theta_1}
\phi''(\theta_2)^2
\tild{k}_{\Cub,K}^2(\llambda \theta_1, \llambda \theta_2)
d\theta_2 \theta_1
\Big).
\end{multline*}
Expanding the kernels when $b$ goes to zero, as they are both bounded in $C^1(\R^2, \C)$, one gets that \eqref{calcul_cub} can be written as 
\begin{equation*}
\left|
\langle \widetilde{\psi}(T; \ u_b, \ \varphi_1), \psi_K(T) \rangle 
-
i 
b
(
 \widetilde{k}_{\Cub,K}^1(0,0)
 -
\widetilde{k}_{\Cub,K}^2(0,0)
)
\int_0^1 \phi''(\theta)^2 \phi'(\theta) d\theta 
\right|
\ioe 
C
|b|^{\frac{42}{41}}
.
\end{equation*}
Moreover, looking at the definition of $C_K$,  $\tild{k}_{\Cub,K}^1$ and $\tild{k}_{\Cub,K}^2$ given respectively in \eqref{cub_non_nul}, \eqref{kernel_cubic_1} and \eqref{kernel_cubic_2}, one can notice that 
$
 \widetilde{k}_{\Cub,K}^1(0,0)
 -
\widetilde{k}_{\Cub,K}^2(0,0)
=C_K$.
Thus, the previous equality leads to \eqref{eq:dev_K} by choice of $\phi$ given in \eqref{eq:contr_oscill}. 

\smallskip \noindent \emph{Size of the end-point.} Using the explicit form of $\Psi$ given in \eqref{order1explicit}, one gets,
\begin{multline}
\label{estim_end_point}
\left\| 
\psi(T
) 
- \psi_1(T)
\right\|_{H^{2m+7}_{(0)}}
\ioe
\left\|
\left(
\langle \mu \varphi_1, \varphi_j \rangle
\int_0^T 
u_b(t) 
e^{
i 
(\lambda_j- \lambda_1)
(t-T)
} 
dt
\right)
\right\|_{h^{2m+7}(\N^*)}
\\
+
\| 
(
\psi 
- \psi_1
- 
\Psi
)(T) 
\|_{H^{2m+7}_{(0)}}, 
\quad 
m \in \{-3, \ldots, 2 \}
\end{multline}
Yet, for all $j \in \N^*$, $j \soe 2$ and $k \in \Z$ with $k \soe -3$, by integrations by parts (integrating $u$ when $k< 0$ or differentiating $u$ when $k \soe 0$), one gets, 
\begin{equation}
\label{estim_coeff_lin_ponc}
\left|
\int_0^T 
u_b(t) 
e^{
i 
(\lambda_j- \lambda_1)
t
}  
dt
\right|
= 
\left|
(\lambda_j- \lambda_1)^{-k} 
\int_0^T 
u^{(k)}_b(t) 
e^{
i 
(\lambda_j- \lambda_1)
t
} 
dt 
\right|
%
\ioe
C
| \lambda_j - \lambda_1 |^{-k}  
|b|^{\frac{1}{41}(11-4k)}
,
\end{equation}
using estimates \eqref{eq:size_controls} with $p=1$. This also holds for $j=1$ as $u_1(T)=0$. By interpolation, such estimates hold for all $j \in \N^*$ and $k \in [-3, 3]$ with a uniform constant with respect to $k$.  Let $\eps \in (0, 3/4)$ and $m \in \{-3, \ldots, 2\}$. Taking $k= \eps +m +\frac{1}{4} \in [-3, 3]$ in \eqref{estim_coeff_lin_ponc}, summing over $j \in \N^*$ and using \cref{decay_coeff} to estimate the coefficients $(\langle \mu \varphi_1, \varphi_j \rangle)_{j \in \N^*}$, one gets
\begin{equation}
\label{estim_end_point_2}
\left\|
\left(
\langle \mu \varphi_1, \varphi_j \rangle
\int_0^T 
u_b(t) 
e^{
i 
(\lambda_j- \lambda_1)
(t-T)
} 
dt
\right)
\right\|_{h^{2m+7}(\N^*)}
%
\ioe 
C
\left(
\sum \limits_{j=1}^{+\infty} \frac{1}{j^{1+4\eps}} 
\right)^{1/2}
|b|^{
\frac{1}{41}(10-4m-4\eps)
}
.
\end{equation}
Moreover, \cite[Proposition 4.5]{B21} gives the existence of $C>0$ such that for all $m=-3, \ldots, 2$, 
\begin{equation}
\label{estim_end_point_3}
\| 
(
\psi 
- \psi_1
- 
\Psi
)(T) 
\|_{H^{2m+7}_{(0)}}
\ioe 
C
\| u_b\|_{H^m(0,T)} \| u_b \|_{H^2(0,T)}
\ioe 
C 
|b|^{
\frac{1}{41}(10-4m)
}
,
\end{equation}
using \eqref{eq:size_controls} to estimate the size of the controls. Then, \eqref{estim_end_point}, \eqref{estim_end_point_2} and \eqref{estim_end_point_3} lead to \eqref{estim_valeur_finale}.
\end{proof}

\begin{prop}
\label{prop:TV1}
The vector $i \varphi_K$ is a small-time $H^2_0$-continuously approximately reachable vector associated with vector variations $\Xi(T)= i \psi_K(T)$. More precisely, for all $T>0$, there exists $C, \rho>0$ and a continuous map $b \mapsto w_b$ from $\R$ to $H^2_0(0,T)$ such that,
\begin{equation}
\label{tv_1}
\forall b \in (-\rho, \rho), 
\quad
\left\|
\psi(T ; \ w_b, \ \varphi_1) - \psi_1(T) - i b \psi_K(T) 
\right\|_{H^{11}_{(0)}(0,1)}
\ioe 
C
|b|^{
1
+
\frac{1}{82}
}, 
\end{equation}
with the following size estimate on the family of controls,
\begin{equation}
\label{tv_size_control}
\| w_b \|_{ H^2_0(0,T)} 
\ioe 
C
|b|^{\frac{1}{41}}.
\end{equation}
\end{prop}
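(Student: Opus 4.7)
My approach is a two-stage construction on a splitting $[0, T] = [0, T_1] \cup [T_1, T]$. First, I use the oscillating controls $u_b$ from \cref{partial_tv_lambda} on $[0, T_1]$ to initiate the motion in the direction $i\psi_K$. Then, on $[T_1, T]$, I correct the (infinitely many) linear components using the controllability in projection of \cref{linear_STLC}. The crux of the argument will be to verify, via \cref{non_linearity}, that this linear correction is small enough not to destroy the motion produced at step one.

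More precisely, fix $T_1 \in (0, T)$, let $u_b \in H^2_0(0, T_1)$ be the family from \cref{partial_tv_lambda}, and set $\psi_0^b := \psi(T_1; u_b, \varphi_1)$. Since $u_b$ is supported in $(0, |b|^{\frac{4}{41}}) \subset (0, T_1)$ for $|b|$ small, all its iterated primitives vanish at $T_1$, and for any $\eps \in (0, 3/4)$,
\begin{equation*}
|\langle \psi_0^b, \psi_K(T_1)\rangle - ib| \ioe C|b|^{1+\frac{1}{41}},
\qquad
\|\psi_0^b - \psi_1(T_1)\|_{H^{2m+7}_{(0)}} \ioe C_\eps |b|^{\frac{10-4m-4\eps}{41}},
\end{equation*}
for $m \in \{-3, \ldots, 2\}$. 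Apply \cref{linear_STLC} with $p = k = 2$ and $J := \N^* \setminus \{K\}$ to produce $v_b := \Gamma_{T_1, T}(\psi_0^b, \P_J \psi_1(T)) \in H^2_0((T_1, T), \R)$ satisfying $\P_J \psi(T; v_b, \psi_0^b) = \psi_1(T)$, the boundary conditions $v_{b,2}(T) = v_{b,3}(T) = 0$, and the simultaneous estimates $\|v_b\|_{H^m_0(T_1,T)} \ioe C_\eps |b|^{\frac{10-4m-4\eps}{41}}$ for $m \in \{-3, \ldots, 2\}$. Define $w_b := u_b \# v_b \in H^2_0((0,T), \R)$: its $H^2_0$-norm is $O(|b|^{\frac{1}{41}})$, which yields \eqref{tv_size_control}, and continuity in $b$ follows from that of $u_b$ together with the $C^1$ regularity of $\Gamma_{T_1, T}$.

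By construction $\P_J \psi(T; w_b, \varphi_1) = \psi_1(T)$, hence $\psi(T; w_b, \varphi_1) = \psi_1(T) + \alpha_b \psi_K(T)$ for some $\alpha_b \in \C$, and \eqref{tv_1} is equivalent to $|\alpha_b - ib| \ioe C|b|^{1+\frac{1}{82}}$. Writing
\begin{equation*}
|\alpha_b - ib| \ioe |\langle \psi(T), \psi_K(T)\rangle - \langle \psi_0^b, \psi_K(T_1)\rangle| + |\langle \psi_0^b, \psi_K(T_1)\rangle - ib|,
\end{equation*}
the second term is $O(|b|^{1+\frac{1}{41}})$ by \cref{partial_tv_lambda}. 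For the first term I invoke \cref{non_linearity}: its boundary hypotheses hold (compact support of $u_b$ inside $(0, T_1)$ together with the conditions on $v_b$), and \eqref{hyp:motion_spec} is satisfied since $\varphi_1 \in \H$. It thus remains to bound the five polynomial expressions produced by \cref{non_linearity}: using \eqref{eq:size_controls} for $u_b$ and the simultaneous estimates for $v_b$, the first four terms are readily seen to be $O(|b|^{1+\frac{1}{41}})$ or better, for $\eps$ small.

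The main obstacle is the last term $\|w_{b,2}\|_{L^\infty(T_1, T)} \|w_{b,1}\|_{L^2}^2$. A crude bound via $w_{b,2}(T_1) = 0$ gives $\|w_{b,2}\|_{L^\infty(T_1,T)} \ioe C_\eps|b|^{\frac{14-4\eps}{41}}$, and combined with $\|w_{b,1}\|_{L^2}^2 \ioe C|b|^{\frac{26}{41}}$ (dominated by the $u_b$ contribution) this yields only $|b|^{\frac{40-4\eps}{41}}$, whose exponent is strictly less than $1$. However, the boundary conditions from \cref{linear_STLC} also give $v_{b,2}(T) = 0$, so $v_{b,2}$ vanishes at both endpoints of $[T_1, T]$, and the Gagliardo-Nirenberg-type inequality $\|v_{b,2}\|_{L^\infty}^2 \ioe 2\|v_{b,2}\|_{L^2}\|v_{b,1}\|_{L^2}$ upgrades the $L^\infty$-bound to $\|w_{b,2}\|_{L^\infty(T_1,T)} \ioe C_\eps|b|^{\frac{16-4\eps}{41}}$. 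The resulting product is $|b|^{\frac{42-4\eps}{41}}$, which is $O(|b|^{1+\frac{1}{82}})$ precisely when $\eps \ioe \frac{1}{8}$. The interaction between the simultaneous estimates of \cref{linear_STLC}, the sharp non overlapping bound of \cref{non_linearity}, and this boundary interpolation is the technical heart of the theorem, and the exponent $\frac{1}{82}$ is exactly what this competition produces.
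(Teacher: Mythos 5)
Your construction is the same as the paper's: the two-stage splitting $w_b = u_b \# v_b$ with $u_b$ from \cref{partial_tv_lambda} on $[0,T_1]$, the linear correction $v_b = \Gamma_{T_1,T}(\psi(T_1;u_b,\varphi_1),\psi_1(T))$ from \cref{linear_STLC} on $[T_1,T]$, and \cref{non_linearity} to check that the correction does not destroy the gain along $\varphi_K$. Your variant Gagliardo--Nirenberg estimate $\|v_{b,2}\|_{L^\infty}^2\ioe 2\|v_{b,2}\|_{L^2}\|v_{b,1}\|_{L^2}$, exploiting $v_{b,2}(T_1)=0$, gives the same exponent $\frac{16-4\eps}{41}$ as the paper's version.

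However, your accounting of the remaining terms of \cref{non_linearity} is wrong, and the constraint you extract on $\eps$ is insufficient. The claim that the first four terms are $O(|b|^{1+1/41})$ is false: the terms $\|w_1\|^3_{L^1(0,T)}$ and $\|v_1\|^2_{L^2(T_1,T)}\|w_1\|_{L^1(0,T)}$ are both dominated by the $v_b$-contribution (since $\|v_1\|_{L^1(T_1,T)} \ioe C |b|^{(14-4\eps)/41}$, which exceeds $\|u_1\|_{L^1(0,T_1)} = O(|b|^{15/41})$ for every $\eps>0$), and each scales like $|b|^{(42-12\eps)/41}$. This exceeds $|b|^{1+1/41}$ for every $\eps>0$, and it is $O(|b|^{1+1/82})$ only when $\eps \ioe \frac{1}{24}$. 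With your choice $\eps \ioe \frac{1}{8}$, the term $\|w_1\|_{L^1}^3$ scales as $|b|^{40.5/41}$, which does \emph{not} beat $|b|^{83/82}$, so the argument as written fails. The binding constraint is $\eps < \frac{1}{24}$, arising from these cubic terms, not from the $\|v_2\|_{L^\infty}\|w_1\|_{L^2}^2$ term you singled out; once $\eps<\frac{1}{24}$ is imposed the proof goes through exactly as you describe.
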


\begin{proof} 
\emph{Definition of the control.} 
Let $0< T_1<T$. To move along the $\pm i\varphi_K$ directions, we use non overlapping controls. More precisely, we define, for all $b \in \R$,
\begin{equation}
\label{def_w_lambda}
w_b
:=
u_b
\1_{[0, T_1]} 
+
v_b
\1_{[T_1, T]},
\end{equation}
 where $(u_b)_{b \in \R}$ is the family of controls defined on $[0,T_1]$ constructed in \cref{partial_tv_lambda}
and 
$$v_b:=\Gamma_{T_1,T} \left( \psi(T_1; \ u_b, \ \varphi_1), \psi_1(T) \right),$$ where $\Gamma_{T_1,T}$ is the control operator constructed in \cref{linear_STLC} with $J=\N^* - \{K\}$ and $(p,k)=(2,2)$. 

\smallskip \noindent \emph{Size of the controls.} Because we use non overlapping controls, for all $b\in \R$, 
\begin{equation}
\label{size_wb_partial}
\| w_b\|_{H^2_0(0,T)}
=
\| u_b\|_{H^2_0(0,T_1)}
+
\| v_b\|_{H^2_0(T_1,T)}
\ioe 
C
|b|^{\frac{1}{41}}
+
\| v_b\|_{H^2_0(T_1,T)}
,
\end{equation}
using the size estimate \eqref{eq:size_controls} on the family $(u_b)_{b \in \R}$ with $p=k=2$. 
On $[T_1, T]$, using the linear estimates \eqref{estim_contr_nl} on $\Gamma_{T_1,T}$ and the estimates \eqref{estim_valeur_finale} on the end-point of the solution at time $T_1$, for all $\eps \in (0, \frac{3}{4})$, one gets the existence of $C>0$, such that for all $b$ small enough and for all $m \in \{-3, \ldots, 2\}$,
\begin{equation}
\label{size_v_lambda}
\| 
v_b
\|_{H^m_0(T_1, T)}
\ioe 
C
\left\| 
\psi(T_1; \ u_b, \ \varphi_1) - \psi_1(T_1)
\right\|_{H^{2m+7}_{(0)}(0,1)}
\ioe 
C 
|b|^{
\frac{1}{41}(10-4m-4\eps)
}
. 
\end{equation}
Taking $\eps < \frac{1}{4}$, estimate \eqref{size_v_lambda} with $m=2$  and \eqref{size_wb_partial} imply \eqref{tv_size_control}.

\smallskip \noindent \emph{Motion along $i \varphi_K$.} By construction of $\Gamma_{T_1,T}$ (see \eqref{contr_proj}), we already know that 
\begin{equation*}
\P \psi(T; \ w_b, \ \varphi_1)
=
\psi_1(T)
=
\P
\left(
\psi_1(T)
+
i b \psi_K(T)
\right),
\end{equation*}
where $\P$ denotes the orthogonal projection on $\overline{\Span_{\C}} \left( \varphi_j , \ j \in \N^*-\{K\} \right)$.
Thus, to prove \eqref{tv_1}, it only remains to prove that 
\begin{equation}
\label{goal}
\left|
\langle \psi(T; \ w_b, \ \varphi_1), \psi_K(T) \rangle 
-
i
b
\right|
\ioe 
C
|b|^{
1
+
\frac{1}{82}
}
.
\end{equation}
By definition of $(u_b)_{b \in \R}$, one already has that  
\begin{equation*}
\left|
\langle \psi(T_1; \ u_b, \ \varphi_1), \psi_K(T_1) \rangle 
-
i
b
\right|
\ioe 
C
|b|^{
1
+
\frac{1}{41}
}
.
\end{equation*}
Thus, it remains to prove that the linear correction used on the time interval $[T_1, T]$ didn't destroy such an estimate, and more precisely, to prove that 
\begin{equation*}
\left|
\langle \psi(T; \ w_b, \ \varphi_1), \psi_K(T) \rangle 
-
\langle \psi(T_1; \ w_b, \ \varphi_1), \psi_K(T_1) \rangle 
\right|
\ioe 
C
|b|^{
1
+
\frac{1}{82}
}
.
\end{equation*}
By \cref{non_linearity}, the left-hand side is estimated by
\begin{multline}
\label{estim_non_additivity}
\O
\Big(
\| w_3\|^2_{L^2(0, T)}
+
\| w_1\|^2_{L^2(0, T)}
\|w_2\|_{L^1(0, T)}
\\
+
\| w_1\|^3_{L^1(0,T)}
+
\| v_1\|^2_{L^2(T_1, T)}
\| w_1\|_{L^1(0, T)}
+
\| v_2\|_{L^{\infty}(T_1, T)}
\| w_1\|^2_{L^2(0, T)}
\Big).
\end{multline}
Thus, it remains to prove that the estimates \eqref{eq:size_controls} on $(u_b)_{b \in \R}$ and \eqref{size_v_lambda} on $(v_b)_{b \in \R}$ are sharp enough so that the previous quantity can be neglected in front of $|b|^{
1
+
\frac{1}{82}
}.
$
For example, using these estimates in $H^{-3}$, one has 
\begin{equation*}
\| w_3\|^2_{L^2(0, T)}
=
\| u_3\|^2_{L^2(0, T_1)}
+
\| v_3\|^2_{L^2(T_1, T)}
\ioe 
C
\left(
|b|^{
\frac{42}{41}
}
+
|b|^{
\frac{44}{41}
-
\frac{8 \eps}{41}
}
\right)
\ioe 
C
|b|^{
\frac{42}{41}
}
,
\end{equation*}
for $b$ small enough, choosing $\eps< \frac{1}{4}$. Similarly, one gets, 
\begin{align*}
\| w_1\|^2_{L^2(0, T)}
\|w_2\|_{L^{2}(0, T)}
&\ioe
C
\left(
|b|^{
\frac{26}{41}
}
+
|b|^{
\frac{28}{41}
-
\frac{8\eps}{41}
}
\right)
\left(
|b|^{
\frac{17}{41}
}
+
|b|^{
\frac{18}{41}
-
\frac{4\eps}{41}
}
\right)
,
\\
\| w_1\|^3_{L^1(0,T)}
&\ioe
C
\left(
|b|^{
\frac{45}{41}
}
+
|b|^{
\frac{42}{41}
-
\frac{12 \eps}{41}
}
\right)
,
\\
\| v_1\|^2_{L^2(T_1, T)}
\| w_1\|_{L^1(0, T)}
&\ioe
C
|b|^{
\frac{28}{41}
-
\frac{8\eps}{41}
}
\left(
|b|^{
\frac{15}{41}
}
+
|b|^{
\frac{14}{41}
-
\frac{4\eps}{41}
}
\right)
.
%
%
\end{align*}
Choosing $\eps< \frac{1}{24}$, for $b$ small enough, every term can be neglected in front of 
$
|b|^{
1+
\frac{1}{82}
}
$. 
In \eqref{estim_non_additivity}, it only remains to estimate 
$\| v_2\|_{L^{\infty}(T_1, T)}
\| w_1\|^2_{L^2(0, T)}$.
As \eqref{size_v_lambda} provides only estimates of $(v_b)_{b \in \R}$ in $L^2$-spaces, one needs to use a Gagliardo-Nirenberg inequality (see \cite[Theorem p.125]{N59}) to estimate $\| v_2\|_{L^{\infty}(T_1, T)}$. More precisely, there exists $C>0$ such that 
\begin{equation*}
\| v_2 \|_{L^{\infty}(T_1,T)} 
\ioe 
C 
\| v_1 \|_{L^2(T_1,T)}^{1/2} 
\| v_2 \|_{L^2(T_1,T)}^{1/2} 
+ C \| v_2 \|_{L^2(T_1,T)}
.
\end{equation*}
Thus, thanks to \eqref{size_v_lambda},  $v_b$ in $W^{-2, \infty}$ is estimated by 
\begin{equation*}
\| v_2 \|_{L^{\infty}(T_1,T)} 
\ioe
C
|b|^{
\frac{16}{41}
-\frac{4\eps}{41}
}.
\end{equation*}
And, finally, one has
\begin{equation*}
\| v_2\|_{L^{\infty}(T_1, T)}
\| w_1\|^2_{L^2(0, T)}
\ioe 
C
|b|^{
\frac{16}{41}
-\frac{4\eps}{41}
}
\left(
|b|^{
\frac{26}{41}
}
+
|b|^{
\frac{21}{41}
-\frac{4\eps}{41}
}
\right)
\end{equation*}
which is also neglected in front of 
$
|b|^{
1+
\frac{1}{82}
}
$ 
.
This concludes the proof of \eqref{tv_1}.

\smallskip \noindent \emph{Continuity of $b \mapsto w_{b}$.} 
The map $b \mapsto u_b$  of \eqref{partial_tv_lambda} is continuous from $\R$ to $H^2(0,T_1)$. 
Besides, the continuity of $b \mapsto v_b$ from $\R$ to $H^2_0(T_1,T)$ stems from the regularity of $\Gamma_{T_1,T}$ (see \cref{linear_STLC}) and of the solution of the Schrödinger equation with respect to the control 
(see \eqref{estim_sol_bis}).  This gives the continuity of the map $b \mapsto w_b$ constructed by \eqref{def_w_lambda}.
\end{proof}

\begin{rem}
The sharp estimates \eqref{size_v_lambda} on the control operator $\Gamma_{T_1, T}$ of \cite{B21bis} together with the sharp estimate \eqref{estim_non_additivity} on the evolution of the solution along the lost direction are the key to prove the motions along the first lost direction $i \varphi_K$. 
\end{rem}

\subsection{Motions in the lost directions $\pm \varphi_K$}

As for the toy-models \eqref{toy_model_3} and \eqref{ODE}, the second approximately reachable vector can be deduced from the first one using a proof inspired by \cite[Th.\ 6]{HK87}. The following statement and its proof are very similar to the one done in finite dimension in \cref{dim_finie_TV2}. One only needs to be careful about the functional setting. 
\begin{prop}
\label{prop:TV2}
The vector $\varphi_K$ is a small-time $H^2_0$-continuously approximately reachable vector associated with vector variations $\Xi(T)= \psi_K(T)$. More precisely, there exists $T^*>0$ such that for all $T \in (0, T^*)$, there exists $C, \rho>0$ and a continuous map $b \mapsto v_b$ from $\R$ to $H^2_0(0,T)$ such that,
\begin{equation}
\label{tv_2}
\forall b \in (- \rho, \rho), 
\quad
\left\|
\psi(T ; \ v_b, \ \varphi_1) - \psi_1(T) -  b \psi_K(T) 
\right\|_{H^{11}_{(0)}(0,1)}
\ioe 
C
|b|^{
1
+
\frac{1}{82}
}, 
\end{equation}
with the following size estimate on the family of controls,
\begin{equation}
\label{tv_size_control_2}
\| v_b \|_{ H^2_0(0,T)} 
\ioe 
C
|b|^{\frac{1}{41}}.
\end{equation}
\end{prop}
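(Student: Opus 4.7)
The plan is to deduce \cref{prop:TV2} from \cref{prop:TV1} by mimicking in infinite dimension the Hermes--Kawski type construction implemented for the ODE toy-model in \cref{dim_finie_TV2}. Let $(w_b)_{b \in \R}$ denote the $H^2_0$-continuous family on $[0,T_0]$ provided by \cref{prop:TV1} for some small block time $T_0 > 0$. For $(b,c) \in \R^2$ I would consider the concatenation
\[
z_{b,c} := w_b \,\#\, 0_{[0,T_0]} \,\#\, w_c \in H^2_0(0,3T_0),
\]
whose $H^2_0$-norm is bounded by $|(b,c)|^{1/41}$ thanks to \eqref{continuity_ET} and \eqref{tv_size_control}.

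The core computation is to evaluate the end-point $\psi(3T_0; z_{b,c}, \varphi_1)$ in $H^{11}_{(0)}$ through the semigroup structure of \eqref{Schrodinger}. On $[0,T_0]$, \cref{prop:TV1} yields $\psi(T_0; w_b, \varphi_1) = \psi_1(T_0) + i b\, \psi_K(T_0) + r_b$ with $\| r_b\|_{H^{11}_{(0)}} \lesssim |b|^{1+1/82}$. Free evolution on $[T_0,2T_0]$ gives $\psi(2T_0; z_{b,c}, \varphi_1) = \psi_1(2T_0) + ib\, \psi_K(2T_0) + e^{-iAT_0} r_b$. The time-invariance of the equation reduces the step on $[2T_0,3T_0]$ to computing $\psi(T_0; w_c, \psi_1(2T_0) + p_b)$ with $p_b := ib\, \psi_K(2T_0) + e^{-iAT_0} r_b$. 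Applying \cref{prop:dep_ci} with $\tau = 2T_0$ and initial perturbation $p_b$, then inserting \cref{prop:TV1} for $w_c$ on $[0,T_0]$ and the identity $\psi_K(T_0)\, e^{-2i \lambda_1 T_0} = \psi_K(3T_0)\, e^{2i (\lambda_K - \lambda_1) T_0}$, I obtain
\[
\psi(3T_0; z_{b,c}, \varphi_1) = \psi_1(3T_0) + \bigl( i c\, e^{2i(\lambda_K-\lambda_1)T_0} + i b \bigr)\, \psi_K(3T_0) + E(b,c),
\]
with $\| E(b,c)\|_{H^{11}_{(0)}} \lesssim |(b,c)|^{1 + 1/82}$, the dominant contributions being $r_b$, $r_c$ and the cross-term $\|w_c\|_{H^2_0} \|p_b\|_{H^{11}_{(0)}} \lesssim |c|^{1/41}\, |b|$ coming from \cref{prop:dep_ci}.

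Setting $\theta := 2(\lambda_K - \lambda_1) T_0$, the $\psi_K(3T_0)$-coefficient equals $-c \sin \theta + i(c \cos \theta + b)$. Choosing $T^* := 3\pi / (2(\lambda_K - \lambda_1))$ and restricting to $T = 3T_0 \in (0, T^*)$ ensures $\sin \theta \neq 0$, so that the linear system $c \cos \theta + b = 0$ and $-c \sin \theta = \alpha$ has a unique solution $(b(\alpha), c(\alpha))$ with $|(b,c)| \lesssim |\alpha|$ for every $\alpha \in \R$. Setting $v_\alpha := z_{b(\alpha),c(\alpha)}$, which is continuous in $\alpha$ since $b \mapsto w_b$ is continuous by \cref{prop:TV1}, yields both \eqref{tv_2} and \eqref{tv_size_control_2}. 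The semigroup argument of \cref{rem:temps_petit} (prepending a zero control) then accommodates final times smaller than the minimal block one.

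The main technical hurdle is ensuring the Gronwall-type estimate of \cref{prop:dep_ci} really holds in the strong topology $H^{11}_{(0)}$ with the multiplicative form $\|\cdot\|_{H^{11}_{(0)}} \lesssim \|w_c\|_{H^2_0}\, \|p_b\|_{H^{11}_{(0)}}$, so that the cross-term $|c|^{1/41}\, |b|$ is absorbed into $|(b,c)|^{1+1/82}$. Unlike the finite-dimensional situation of \cref{dim_finie_TV2}, this requires the full strength of the regularity and boundary conditions (H$_{\reg}$) on $\mu$, through the continuity of $\varphi \mapsto \mu \varphi$ on the relevant Sobolev scale recorded in \cref{rem:cont_mu}; otherwise the perturbation $r_b$ would lose regularity at each step and the argument would break.
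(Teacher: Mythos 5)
Your construction is essentially identical to the paper's: you concatenate two copies of the prop:TV1 family with a free-evolution middle block, apply the semigroup property together with \cref{prop:dep_ci} to track the $H^{11}_{(0)}$ error and the cross-term $\|w_c\|_{H^2_0}\|p_b\|_{H^{11}_{(0)}}$, and then solve the same $2\times 2$ linear system for $(b,c)$ to rotate the reachable direction from $i\psi_K$ to $\psi_K$, with the same constraint $\sin(2(\lambda_K-\lambda_1)T_0)\neq 0$ setting the threshold $T^*$. The bookkeeping (using $T_0$ for the block time, $T=3T_0$ for the final time, and appealing to \cref{rem:temps_petit} for smaller final times) is cleaner than the paper's, but the argument is the same.
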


\begin{proof}
Denote by $(u_b)_{b \in \R}$ the control variations associated with $i \varphi_K$ constructed in \cref{prop:TV1}. The goal is to prove that the existence of $C>0$ such that for all $(\alpha, \beta) \in \R^2$ small enough, 
\begin{multline}
\label{goal_TV2_infinite}
\left\|
\psi(3T; \ u_{\alpha} \# 0_{[0, T]} \# u_{\beta}, \ \varphi_1) 
-
\psi_1(3T)
-
(
i 
\beta
e^{
2i( \lambda_K-\lambda_1)T
}
+
i
\alpha
)
\psi_K(3T)
\right\|_{H^{11}_{(0)}}
\\
\ioe 
C
| (\alpha, \beta) |^{1+\frac{1}{82}}.
\end{multline}
Thus, for all $T \in \left(0, \frac{\pi}{2(\lambda_K-\lambda_1)} \right)$ and $b \in \R$, taking $\beta=-\frac{b}{\sin(2(\lambda_K-\lambda_1)T)}$ and $\alpha=-\beta \cos(2(\lambda_K-\lambda_1)T)$, this proves the existence of a family $(v_{b})_{b \in \R}$ satisfying \eqref{tv_2} and \eqref{tv_size_control_2}. 
So, it remains to prove \eqref{goal_TV2_infinite}. 
First, by definition of $(u_b)_{b \in \R}$ in \cref{prop:TV1}, there exists $C>0$ and $\rho>0$ such that for all $\alpha \in (-\rho, \rho)$, 
\begin{equation}
\label{sur_0_T_infinite}
\left\|
\psi(T; \ u_{\alpha}, \ \varphi_1)
-
\psi_1(T)
-
i \alpha \psi_K(T)
\right\|_{H^{11}_{(0)}}
\ioe
C
|\alpha|^{1+\frac{1}{82}}
\quad
\text{ with }
\| u_{\alpha}\|_{H^2_0(0,T)}
\ioe 
C
|\alpha|^{\frac{1}{41}}.
\end{equation}
Then, on $[T, 2T]$, no control is activated, so $\psi(2T)= e^{-iAT} \psi(T)$ and \eqref{sur_0_T_infinite} becomes
\begin{equation}
\label{sur_T_2T_inf}
\left\|
\psi(2T; \ u_{\alpha} \# 0_{[0, T]}, \ \varphi_1)
-
\psi_1(2T)
-
i\alpha \psi_K(2T)
\right\|_{H^{11}_{(0)}}
\ioe
C
|\alpha|^{1+\frac{1}{82}}.
\end{equation}
Then, using the semi-group property of the Schrödinger equation, one has, 
\begin{equation*}
\psi(3T; \ u_{\alpha} \# 0_{[0, T]} \# u_{\beta}, \ \varphi_1)
=
\psi(T; \ u_{\beta}, \ \psi(2T; \ u_{\alpha} \# 0_{[0, T]}, \ \varphi_1)).
\end{equation*}
Together with the estimate \cref{prop:dep_ci} about the dependency of the solutions of the Schrödinger equation with respect to initial condition, one has
\begin{multline*}
\left\|
\psi(3T; \ u_{\alpha} \# 0 \# u_{\beta}, \ \varphi_1)
-
\psi(T; \ u_{\beta}, \ \varphi_1)e^{-i\lambda_1 2T}
-
e^{-iA T}
\left(
\psi(2T; \ u_{\alpha} \# 0, \ \varphi_1)
-
\psi_1(2T)
\right)
\right\|_{H^{11}_{(0)}}
\\
\ioe
C
\| u_{\beta} \|_{H^2_0(0,T)}
\left\|
\psi(2T; \ u_{\alpha} \# 0, \ \varphi_1)
-
\psi_1(2T)
\right\|_{H^{11}_{(0)}}.
\end{multline*}
Using estimate \eqref{tv_size_control} on $(u_{\beta})$ and  \eqref{sur_T_2T_inf} on 
$
\psi(2T; \ u_{\alpha} \# 0, \ \varphi_1)
-
\psi_1(2T)
$,
 the right-hand side of the previous inequality is estimated by 
$
C
| \beta |^{\frac{1}{41}}
| \alpha|
.
$
Then, using once again the estimate \eqref{sur_T_2T_inf} on 
$
\psi(2T; \ u_{\alpha} \# 0, \ \varphi_1)
-
\psi_1(2T)
$
and the definition of $(u_{\beta})_{\beta}$ given by \eqref{tv_1}, one has, 
\begin{multline*}
\left\|
\psi(3T; \ u_{\alpha} \# 0_{[0, T]} \# u_{\beta}, \ \varphi_1)
-
\psi_1(3T)
-
i \beta \psi_K(T) e^{-i \lambda_1 2T}
-
i \alpha \psi_K(3T)
\right\|_{H^{11}_{(0)}}
\\
\ioe
C
| \beta |^{\frac{1}{41}}
| \alpha|
+ 
C
| \alpha|^{1+ \frac{1}{82}},
\end{multline*}
which gives \eqref{goal_TV2_infinite} and this concludes the proof. 

\end{proof}

\subsection{Proof of \cref{the_theorem}: The $H^2_0-$STLC of the Schrödinger equation}
The goal of this section is to prove \cref{the_theorem} using the systematic approach developed in \cref{sec:black_box}. More precisely, we apply \cref{black_box} with $E_T:=H^2_0(0,T)$ for all $T> 0$, $X:=H^{11}_{(0)}(0,1)$ and 
\begin{equation*}
\F_T : (\psi_0, u) \mapsto \psi(T; \ u, \ \psi_0),
\end{equation*}
where $\psi$ is the solution of the Schrödinger equation \eqref{Schrodinger} with $\psi(0)=\psi_0$. Let us now check that the assumptions of \cref{black_box} hold in this setting (with the adaptation discussed in \cref{adaptation}). 

\begin{itemize}
\item [$(A_1)$] By \cite[Prop.\ 4.2]{B21}, it is known that when $\mu$ satisfies ($H_{\reg}$), the end-point map is $C^1$ around $(\varphi_1, 0)$. The $C^2$-regularity is proved similarly, and thus, the proof is left to the reader.

\item [$(A_2)$]  By \cite[Prop.\ 4.2]{B21}, the differential at $(\varphi_1, 0)$ is given by 
$
d \F_T (\varphi_1, 0).(\Psi_0,v) = \Psi(T),
$
where $\Psi$ is the solution of the linearized system
\begin{equation*}  
\left\{ 
    \begin{array}{ll}
        i \partial_t \Psi = - \partial^2_x \Psi -v(t)\mu(x) \psi_1(t,x) , \\
        \Psi(t,0) = \Psi(t,1)=0,\\
        \Psi(0,x)=\Psi_0.
    \end{array}
\right. 
\end{equation*}
Thus, for all $\Psi_0 \in H^{11}_{(0)}$, $T \mapsto d\F_T(\varphi_1,0).(\Psi_0,0)=e^{-iAT} \Psi_0$ is continuous on $\R$ and $d\F_0(\varphi_1, 0).(\Psi_0, 0)=\Psi_0$. 

\item [$(A_3)$] By the uniqueness result stated in \cref{wp}, one can check that, for all $T_1, T_2 >0$, $\psi_0 \in H^{11}_{(0)}$,  $u \in H^2_0(0,T_1)$ and $v \in H^2_0(0,T_2)$, 
\begin{equation*}
\psi(T_1+T_2; \ u \# v, \ \psi_0)
=
\psi(T_2; \ v, \ \psi(T_1; \ u, \ \psi_0)).
\end{equation*}

\item [$(A_4)$] By  \cite[Prop.\ 4.3]{B21}, when $\mu$ satisfies ($H_{\lin}$), the reachable set of the linearized system around the ground state is given by 
\begin{equation*}
\H
=
\overline{
\Span_{\C}
}
\left(
\psi_j(T);
\
\text{for all }
j 
\in \N^*-\{K\}
\right). 
\end{equation*}
This space doesn't depend on $T$, is closed, and is of codimension 2 in $L^2(0,1)$. 

\item [$(A_5)$] By \cref{prop:TV1} and \cref{prop:TV2}, when $\mu$ satisfies ($H_{\lin}$), ($H_{\Quad}$) and ($H_{\Cub}$), both $i \varphi_K$ and $\varphi_K$ are small-time $H^2_0$-continuously approximately reachable vectors. 
\end{itemize}

By \cref{black_box}, when $\mu$ satisfies (H$_{\reg}$), (H$_{\lin}$), (H$_{\Quad}$) and (H$_{\Cub}$), the Schrödinger equation \eqref{Schrodinger} is $H^2_0$-STLC around the ground state with targets in $H^{11}_{(0)}(0,1)$.

\appendix
\section{Existence of a function $\mu$ satisfying all the hypotheses}
\label{existence_mu}

\begin{rem}
In this appendix, the coefficients $(A^p_K)_{p=1,2,3}$ and $C_K$ respectively defined in \eqref{quad_nul_1}, \eqref{quad_nul_2}, \eqref{quad_non_nul} and \eqref{cub_non_nul} are seen as quadratic or cubic forms with respect to $\mu$. Moreover, the definition given in terms of series can be tricky to use. Thus, we use instead the expressions in terms of Lie brackets given in \cref{rem:lie_brackets}. Computing the Lie brackets, one gets that for all $\mu$ satisfying (H$_{\reg}$), the quadratic (resp.\ cubic) forms $A^1_K$ and $C_K$ are given by
\begin{equation}
\label{LB_A1K}
A^1_K( \mu) 
=
\langle \mu'^2 \varphi_1, \varphi_K \rangle
\quad
\text{ and }
\quad
C_K(\mu)
=
-4
\langle
\mu'^2 \mu'' \varphi_1, \varphi_K
\rangle.
\end{equation}
The similar expression of $A^2_K$ is quite heavy. Computing the associated Lie bracket and then `symmetrizing' the associated quadratic form (see \cite[Proposition A.3]{B21bis} for more details), one gets the existence of a constant $C>0$ such that for all $\mu$ satisfying (H$_{\reg}$), one has
\begin{equation*}
\label{LB_A2K}
| 
A^2_K(\mu)
-
\langle {\mu^{(3)}}^2 \varphi_1, \varphi_K \rangle
|
\ioe 
C \| \mu \|_{H^2(0,1)}^2.
\end{equation*}
\cite[Proposition A.3]{B21bis} also provides a similar approximate expression of $A^3_K$, but it will not be useful.
\end{rem}

\begin{thm}
\label{thm:existence_mu}
Let $K \in \N^*$, $K \soe 2$. There exists $\mu$ satisfying (H$_{\reg}$), (H$_{\lin}$), (H$_{\Quad}$) and (H$_{\Cub}$). 
\end{thm}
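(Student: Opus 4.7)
The plan is a perturbation/transversality argument on the affine subspace $\mathcal{M}\subset H^{11}((0,1),\R)$ cut out by the four boundary conditions in (H$_{\reg}$). On $\mathcal{M}$, the three constraints we must achieve are the linear condition $\langle\mu\varphi_1,\varphi_K\rangle=0$ and the two quadratic conditions $A^1_K(\mu)=0$, $A^2_K(\mu)=0$, while the remaining conditions in (H$_{\lin}$), (H$_{\Quad}$), (H$_{\Cub}$) are open. I would thus set up the smooth map
$$\Phi:\mathcal{M}\longrightarrow\R^3,\qquad\mu\longmapsto\bigl(\langle\mu\varphi_1,\varphi_K\rangle,\,A^1_K(\mu),\,A^2_K(\mu)\bigr),$$
and look for a zero of $\Phi$ at which the open conditions are all satisfied.

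First I would choose a convenient base $\mu_0\in\mathcal{M}$ such that $\mu_0^{(5)}(0)\neq 0$ and $\mu_0^{(5)}(1)\neq 0$ with $\mu_0^{(5)}(1)\pm\mu_0^{(5)}(0)\neq 0$, so that \eqref{coeff_IPP} gives $|\langle\mu_0\varphi_1,\varphi_j\rangle|\sim C/j^7$ for $j$ large and, by adjusting finitely many parameters, also for every small $j\neq K$. Since \eqref{H_lin_2}, $A^3_K\neq 0$ and $C_K\neq 0$ are each stable under small perturbations (the second and third are non-trivial quadratic/cubic forms on $\mathcal{M}$, as can be checked on explicit test functions using the formulas in \eqref{LB_A1K} and \eqref{def_Apk}), it suffices to find a zero of $\Phi$ inside a prescribed neighborhood of such a $\mu_0$.

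Next I would make the ansatz $\mu=\mu_0+\sum_{i=1}^{N}t_i h_i$ with $h_i\in\mathcal{M}-\mu_0$. The first component of $\Phi$ is affine in $t$, so picking one $h_{i_0}$ with $\langle h_{i_0}\varphi_1,\varphi_K\rangle\neq 0$ lets me solve for $t_{i_0}$ and reduce to an equation in the remaining parameters on an affine hyperplane on which $\langle\mu\varphi_1,\varphi_K\rangle=0$ holds automatically. Using the identities $A^1_K(\mu)=\langle\mu'^2\varphi_1,\varphi_K\rangle$ and $A^2_K(\mu)=\langle(\mu^{(3)})^2\varphi_1,\varphi_K\rangle+O(\|\mu\|_{H^2}^2)$, I would choose two further perturbations $h_1,h_2$ supported so that their contributions to the gradients of $A^1_K$ and $A^2_K$ at $\mu_0$ (which are the linear functionals $h\mapsto 2\langle\mu_0'h',\varphi_1\varphi_K\rangle$ and $h\mapsto 2\langle\mu_0^{(3)}h^{(3)},\varphi_1\varphi_K\rangle+$ lower order) are linearly independent; this is a matter of explicit checking since the dependence on $h$ involves different orders of derivatives. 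The implicit function theorem then produces a small solution $(t_1,t_2)$ of the $2\times 2$ system $A^1_K=A^2_K=0$, and by smallness the resulting $\mu$ still satisfies all the open conditions.

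The main obstacle is organizing the perturbations so that the $3\times N$ Jacobian of $\Phi$ has maximal rank while the open nondegeneracy conditions on $A^3_K$, $C_K$ and the $\langle\mu\varphi_1,\varphi_j\rangle$ are preserved along the implicit-function path. In practice, one would pick $h_1,h_2,h_{i_0}$ as low-degree polynomials modified to satisfy the four boundary conditions in \eqref{mu_bc} (for instance via the ansatz $h(x)=x^2(1-x)^2 p(x)$ with $p$ a polynomial), compute the three scalar quantities $\langle h\varphi_1,\varphi_K\rangle$, $\langle\mu_0'h',\varphi_1\varphi_K\rangle$, $\langle\mu_0^{(3)}h^{(3)},\varphi_1\varphi_K\rangle$ in closed form, and verify by a direct linear-algebra argument that they can be made to span $\R^3$. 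Once this nondegeneracy is established at a single base point $\mu_0$, the existence of $\mu$ follows from the implicit function theorem, and density of the bad locus (where $A^3_K$, $C_K$ or one of the coefficients $\langle\mu\varphi_1,\varphi_j\rangle$ vanishes) being a proper analytic subset allows one to perturb further if necessary to ensure all open conditions.
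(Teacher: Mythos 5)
Your transversality scheme has a genuine gap at the point where you pass from ``the gradients of $A^1_K$ and $A^2_K$ at $\mu_0$ are linearly independent along $\mathrm{span}(h_1,h_2)$'' to ``the implicit function theorem then produces a small solution $(t_1,t_2)$ of the $2\times 2$ system $A^1_K=A^2_K=0$.'' Full rank of the Jacobian of $(t_1,t_2)\mapsto\bigl(A^1_K(\mu_0+t_1h_1+t_2h_2),A^2_K(\mu_0+t_1h_1+t_2h_2)\bigr)$ only yields \emph{local surjectivity}: the image of a small ball around the origin covers a small ball around the value $\bigl(A^1_K(\mu_0),A^2_K(\mu_0)\bigr)$. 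There is no reason this ball contains $(0,0)$, because you placed no constraint on $A^1_K(\mu_0)$ or $A^2_K(\mu_0)$ when selecting $\mu_0$ (your selection criteria concern only $\mu_0^{(5)}(0),\mu_0^{(5)}(1)$ and the coefficients $\langle\mu_0\varphi_1,\varphi_j\rangle$). The implicit/inverse function theorem cannot manufacture a zero of a map out of nothing; it can only follow a zero under perturbation. To make the step go through you would need either to start from a point where $A^1_K=A^2_K=0$ already (which is what you are trying to prove exists) or to exhibit a family of perturbations whose image under $(A^1_K,A^2_K)$ is guaranteed to pass through $(0,0)$ \emph{globally}. This second route is essentially what the paper does: the perturbations $\mu_{\varepsilon,\lambda}$ are supported away from the support of $\mu_{\Rref}$, so the quadratic forms become additive, $A^1_K(\mu_{\Rref}+\mu_{\varepsilon,\lambda})=A^1_K(\mu_{\Rref})+A^1_K(\mu_{\varepsilon,\lambda})$, and then $\lambda\mapsto A^1_K(\mu_{\varepsilon,\lambda})\approx\lambda$ sweeps a full interval so that the intermediate value theorem produces a zero regardless of the initial size of $A^1_K(\mu_{\Rref})$. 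The analyticity in $\varepsilon$ is then used with the isolated zeros theorem to recover the open non-vanishing conditions that a single perturbation might accidentally spoil. Your proposal has no analogue of this disjoint-support/additivity mechanism, and without it the implicit function theorem step fails.

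A secondary, more fixable, issue: you dispose of the infinitely many constraints $\langle\mu\varphi_1,\varphi_j\rangle\neq 0$ by an asymptotic argument for large $j$ plus ``adjusting finitely many parameters'' for small $j$. The asymptotic bound from \eqref{coeff_IPP} involves an $o(1/j^7)$ remainder that is not obviously uniform over a neighborhood of $\mu_0$; to make ``for $j$ large enough uniformly in $\mu$ near $\mu_0$'' rigorous you would need an extra integration by parts (available in $H^{11}$) giving an $O(\|\mu^{(8)}\|/j^8)$ error, or else the Baire category argument the paper uses. This part is repairable, but as written it is not airtight.
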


\begin{rem}
To prove \cref{thm:existence_mu}, it is enough to prove the existence of a function  $\mu \in H^{11}( (0,1), \R) \cap H^4_0(0,1)$ satisfying
\eqref{lin_nul},
\eqref{quad_nul_1},
\eqref{quad_nul_2},
\eqref{quad_non_nul},
\eqref{cub_non_nul}
and
\begin{align}
\label{supp_mu}
&\supp \mu \subset [0,1), 
\\
\label{cond_bord}
&\mu^{(5)}(0)\neq0, 
\\
\label{lin_non_nul}
&\forall j \in \N^*-\{K\}, 
\quad
\langle \mu \varphi_1, \varphi_j \rangle \neq 0. 
\end{align}
Indeed, when the boundary conditions \eqref{mu_bc} hold, thanks to \eqref{coeff_IPP} of \cref{decay_coeff}, assumption \eqref{H_lin_2} is equivalent to 
\begin{equation*}
\mu^{(5)}(0)
\pm 
\mu^{(5)}(1)
\neq 
0 
\quad 
\text{ and } 
\quad 
\ 
\forall j \in \N^*-\{K\}, 
\ 
\langle \mu \varphi_1, \varphi_j \rangle 
\neq 0.
\end{equation*}
\end{rem}
The proof of \cref{thm:existence_mu} is in four steps.
\begin{itemize}
\item  First, using Baire theorem to deal with the infinite number of non-vanishing conditions, one can find a function $\mu_{\Rref}$ satisfying \eqref{lin_nul}, \eqref{cub_non_nul}, \eqref{cond_bord} and \eqref{lin_non_nul}. Notice that only the non-vanishing condition \eqref{quad_non_nul} is not treated at this stage as the strategy of the two following steps, relying on oscillating functions, would destroy this condition.  
\item Then, using some analyticity and the isolated zeros theorem, one constructs $\hat{\mu}_{\Rref}$ a perturbation of $\mu_{\Rref}$ satisfying \eqref{quad_nul_1} while conserving all the previous properties already satisfied by $\mu_{\Rref}$. 
\item Similarly, one constructs then $\tild{\mu}_{\Rref}$ a perturbation of $\hat{\mu}_{\Rref}$ satisfying \eqref{quad_nul_2} while conserving all the previous properties satisfied by $\hat{\mu}_{\Rref}$. 
\item Finally, using the construction of a quadratic basis, from $\tild{\mu}_{\Rref}$, one constructs a new function satisfying \eqref{quad_non_nul} in addition to the previous conditions. 
\end{itemize}

\begin{proof}[Proof of \cref{thm:existence_mu}]
Let $K \in \N^*$, $K \soe 2$ and $\overline{x} \in (0,1)$ such that $\varphi_K(\overline{x})=0$. As $\varphi_1 >0$ on $(0,1)$ and $\varphi_K'(\overline{x})>0$, one may assume the existence of $\delta>0$ such that $\varphi_1 \varphi_K>0$ on $( \overline{x}, \overline{x}+ \delta)$ and  $\varphi_1 \varphi_K<0$ on $(\overline{x}- \delta, \overline{x})$. Let $\eta \in (0, \overline{x} - \delta)$ such that $\varphi_1 \varphi_K \neq 0$ on $(0, \eta)$. 

\smallskip \noindent \emph{Step 1: Existence of $\mu$ in $H^{11} \cap H^4_0(0,1)$ supported on $ [0, \eta)$ and satisfying  \eqref{lin_nul}, \eqref{cub_non_nul}, \eqref{cond_bord} and \eqref{lin_non_nul}.} In this step, we work with the $H^{11}(0,1)$-topology. 
Denote by 
\begin{align*}
\E 
&:=
\left\{
\mu \in H^{11}( (0,1), \R);
\
\mu \equiv 0 \text{ on } \left[ \frac{\eta}{2}, 1 \right]
\text{ and }
\mu \text{ satisfies } \eqref{lin_nul}
\right\}
\cap
H^4_0(0,1)
, 
\\
\U
&:=
\left\{
\mu \in \E; 
\
\mu \text{ satisfies }
\eqref{cub_non_nul},
\eqref{cond_bord}
\text{ and }
\eqref{lin_non_nul}
\right\}.
\end{align*}
The goal of Step 1 is to prove that $\U$ is not empty. As $\E$ is not empty, it suffices to prove that $\U$ is dense in $\E$. 
Moreover, denoting by 
\begin{equation*}
\Cspace
:=
\left\{
\mu \in \E;
\
C_K(\mu) \neq 0
\right\}
,
\
\V
:=
\left\{
\mu \in \E;
\
\mu^{(5)}(0) \neq 0
\right\}
\text{ and }
\
\U_j
:=
\left\{
\mu \in \E;
\
\langle \mu \varphi_1, \varphi_j \rangle \neq 0
\right\}
,
\end{equation*}
$\U$ is the intersection of all the open subsets $\V$, $\Cspace$ and $\U_j$ for $j \in \N^*-\{K\}$. Thus, as $\E$ is a complete space (because closed in $H^{11}$), by Baire theorem, to prove that $\U$ is dense in $\E$, it suffices to prove that $\V$, $\Cspace$ and $\U_j$ for $j \in \N^*-\{K\}$ are dense in $\E$. The density of $\V$ is clear. Let $j \in \N^*-\{K\}$. 

\smallskip \noindent \emph{$\U_j$ is dense in $\E$.} Let $\mu^*$ in $\E$ such that $\langle \mu^* \varphi_1, \varphi_j \rangle =0$ and let $\eps>0$. As the linear forms $\mu \mapsto \langle \mu \varphi_1, \varphi_K \rangle$ and $\mu \mapsto \langle \mu \varphi_1, \varphi_j \rangle$ are linearly independent (for $j \neq K$), one can find $\nu \in C_c^{\infty}(0, \frac{\eta}{2})$ such that 
$
\langle \nu \varphi_1, \varphi_K \rangle = 0
$
and 
$
\langle \nu \varphi_1, \varphi_j \rangle \neq 0.
$
Then $\mu_{\eps}:= \mu^* + \frac{\eps}{\| \nu \|} \nu$ is in $\U_j$ with $\| \mu_{\eps} - \mu^* \|_{H^{11}} < \eps$.

\smallskip \noindent \emph{$\Cspace$ is dense in $\E$.} Let $\mu^*$ in $\E$ such that $C_K(\mu^*) =0$ and let $\eps>0$. By a similar construction than the one given in \cite[Theorem A.4]{B21bis}, one can find $\nu \in C_c^{\infty}(0, \frac{\eta}{2})$ such that 
$
\langle \nu \varphi_1, \varphi_K \rangle = 0
$
and
$
C_K(\nu) \neq 0.
$
Then, by \eqref{LB_A1K}, $\eps \mapsto C_K ( \mu^* + \frac{\eps}{\| \nu \|} \nu )$ is a polynomial of degree 3 vanishing at zero. Thus, there exists $\eps^* > 0$ such that this polynomial doesn't vanish on $(0, \eps^*)$. Hence, for all $\eps \in (0, \eps^*)$, $\mu_{\eps}:= \mu^* + \frac{\eps}{\| \nu \|} \nu$ is in $\Cspace$ with $\| \mu_{\eps} - \mu^* \|_{H^{11}} < \eps$.

\smallskip \noindent \emph{Step 2: Existence of $\mu$ in $H^{11} \cap H^4_0$ satisfying  \eqref{lin_nul}, \eqref{quad_nul_1}, \eqref{cub_non_nul}, \eqref{supp_mu}, \eqref{cond_bord} and \eqref{lin_non_nul}.}
Let $\mu_{\Rref}$  in $H^{11} \cap H^4_0$ constructed at Step 1, supported on $ [0, \eta)$ and satisfying \eqref{lin_nul}, \eqref{cub_non_nul},  \eqref{cond_bord} and \eqref{lin_non_nul}. The goal of this step is to prove that if $\mu_{\Rref}$ doesn't already satisfy \eqref{quad_nul_1} (we assume that  $A^1_K( \mu_{\Rref}) <0$, the case $A^1_K( \mu_{\Rref}) >0$ is similar), then one can construct a perturbation of $\mu_{\Rref}$ satisfying \eqref{quad_nul_1} while conserving the properties already satisfied by $\mu_{\Rref}$.  To that end, one can consider the following `basis' functions.
\begin{itemize}
\item Let $\mu_0$ in $C_c^{\infty} \left( \overline{x}+ \delta, \frac{1+\overline{x}+\delta}{2} \right)$ such that $\langle \mu_0 \varphi_1, \varphi_K \rangle =1$. 
\item
Let $J^-$ and $J^+$ two open intervals of respectively $(\overline{x} - \delta, \overline{x})$ and $(\overline{x}, \overline{x}+\delta)$. For all $\eps >0$ and $\lambda \neq 0$, we define
\begin{equation}
\label{exp_mu_oscill}
\mu_{\eps, \lambda}(x)
:= 
\sqrt{
\frac{
 \eps | \lambda|
}
{
|\varphi_1(x(\lambda)) \varphi_K(x(\lambda))|
}
} 
\
g 
\left(
\frac{x-x(\lambda)}{\eps} 
\right),
\quad x \in [0,1],
\end{equation}
where $g \in C^{\infty}_c(0,1)$ such that $\int_0^1 g'(y)^2 dy=1$ and $x(\lambda):=x^+ \1_{\lambda>0}+x^- \1_{\lambda<0}$ where $x^{\pm}$ are in $J^{\pm}$ (thus, $\sign( \varphi_K(x^{\pm}))= \pm 1$). Notice that $\mu_{\eps, \lambda}$ is supported on $(x^- , x^- + \eps) \cup (x^+ , x^+ + \eps)$ and thus on  $J^- \cup J^+$ for $\eps$ small enough. Formally, $\mu_{\eps, \lambda}$ is constructed so that $A^1_K( \mu_{\eps, \lambda}) \approx \lambda$.
\end{itemize}
We consider perturbations of $\mu_{\Rref}$ of the following form, 
\begin{equation}
\label{def_nu_eps_lambda}
\nu_{\eps, \lambda}
:=
\mu_{\Rref}
+
\mu_{\eps, \lambda}
-
\langle \mu_{\eps, \lambda} \varphi_1, \varphi_K \rangle \mu_0,
\quad
\eps >0, 
\quad
\lambda \neq 0. 
\end{equation}
Notice that all the functions have disjoint supports (see Figure \ref{supports_Step2}) so that the quadratic and cubic forms can be seen as additive. Moreover, by construction, for all $\eps>0$ and $\lambda \neq 0$, $\nu_{\eps, \lambda}$ already satisfies \eqref{lin_nul}, \eqref{supp_mu} and \eqref{cond_bord}.

\begin{figure}[!h]
\centering
\begin{tikzpicture}
\draw (0,0) -- (1.5,0) ;
\draw (0,-0.1) -- (0, 0.1) ;
\draw (0,-0.01) node[below]{$\scriptstyle{0}$} ;
\draw [line width=0.7mm,color=Magenta] (0,0) -- (1.5, 0) ;
\draw [Magenta] (0.7,-0.01) node[above]{$\scriptstyle{\mu_{\Rref}}$} ;
\draw (1.5,-0.1) -- (1.5, 0.1) ;
\draw (1.5,-0.01) node[below]{$\scriptstyle{\eta}$} ;
\draw (3,-0.1) -- (3, 0.1) ;
\draw (3,-0.01) node[below]{$\scriptstyle{\overline{x}-\delta}$} ;
\draw (1,0) -- (1.8,0) ;
\draw  (1.8,0) -- (2.5,0) ;
\draw[line width=0.7mm,color=Cyan]  (5.2,0) -- (5.7,0) ;
\draw [Cyan] (5.5, 0.01) node[below]{$\scriptstyle J^-$} ;
\draw [Cyan] (5.5,-0.01) node[above]{$\scriptstyle{\mu_{\eps, \lambda}}$} ;
\draw (5.5, -0.1) -- (5.5, 0.1) ;
%
%
\draw  (1.5,0) -- (6,0) ;
%
\draw (6,0) -- (6.5,0) ;
\draw (6.5,-0.1) -- (6.5, 0.1) ;
\draw (6.5,-0.01) node[below]{$\scriptstyle \overline{x}$} ;
\draw (6.5, 0) -- (7,0) ;
%
\draw  (7,0) -- (12,0) ;
%
\draw[line width=0.7mm,color=Cyan]  (8,0) -- (9,0) ;
\draw [Cyan] (8.5, 0.01) node[below]{$\scriptstyle J^+$} ;
\draw [Cyan] (8.5,-0.01) node[above]{$\scriptstyle{\mu_{\eps, \lambda}}$} ;
\draw (8.5, -0.1) -- (8.5, 0.1) ;
%
\draw (12, 0) -- (12.5,0) ;
\draw (10.3,-0.1) -- (10.3, 0.1) ;
\draw (10.3,-0.01) node[below]{$\scriptstyle \overline{x}+\delta$} ;
\draw (12.5, 0) -- (13.5,0) ;
\draw (13.5,-0.1) -- (13.5, 0.1) ;
\draw (13.5,-0.01) node[below]{$\scriptstyle 1$} ;
\draw (11.9,-0.1) -- (11.9, 0.1) ;
\draw (11.9,-0.01) node[below]{$\scriptstyle \frac{1+\overline{x}+\delta}{2}$};
\draw[line width=0.7mm,color=Gray]  (10.5,0) -- (11.5,0) ;
\draw [Gray] (11,-0.01) node[above]{$\scriptstyle{\mu_0}$} ;
%
\end{tikzpicture}
\caption{The supports of the functions used in Step 2 are depicted.
}
\label{supports_Step2}
\end{figure}

\smallskip \noindent \emph{Step 2.1: For all $\eps$ small enough, there exists $\lambda(\eps)>0$ such that $\nu_{\eps, \lambda(\eps)}$ satisfies \eqref{quad_nul_1}.}
The goal is to construct a one-parameter family of functions such that the following quantity vanishes, 
\begin{equation}
\label{def_Q}
Q( \eps, \lambda) 
:=
A^1_K 
(
\nu_{\eps, \lambda}
)
=
A^1_K
(
\mu_{\Rref}
)
+
A^1_K
(
\mu_{\eps, \lambda}
)
+
\langle \mu_{\eps, \lambda} \varphi_1, \varphi_K \rangle ^2 
A^1_K
(
\mu_0
).
\end{equation}

\smallskip \noindent \emph{Regularity of $Q$.}
Looking at \eqref{exp_mu_oscill}, one could fear some lack of regularity for $Q$ with respect to $\lambda$. However, as $A^1_K(\mu_{\Rref})<0$, one only needs to study $Q$ on $(\R^*_+)^2$.  
Moreover, substituting the expression \eqref{exp_mu_oscill} and performing the change of variables $x=\eps y + x^+$, one has, for all $\eps >0$ and $\lambda >0$, 
\begin{multline}
\label{exp_fK}
\langle \mu_{\eps, \lambda} \varphi_1, \varphi_K \rangle 
=
\sqrt{
\frac{
\eps \lambda
}
{
\varphi_1( x^+) \varphi_K(x^+)
}
}
\int_{x^+}^{x^+ + \eps} 
g 
\left(
\frac{x-x^+}{\eps} 
\right)
\varphi_1( x)
\varphi_K(x)
dx
\\
=
\eps^{3/2}
\sqrt{
\frac{
 \lambda
}
{
\varphi_1( x^+) \varphi_K(x^+)
}
}
\int_0^1 
g(y)
\varphi_1( \eps y + x^+)
\varphi_K( \eps y + x^+)
dy.
\end{multline}
 Thus, the map 
$
(\eps, \lambda) 
\mapsto 
\langle \mu_{\eps, \lambda} \varphi_1, \varphi_K \rangle 
$
is analytic on $(\R^*_+)^2$. Similarly, using the computation of $A^1_K$ given in \eqref{LB_A1K}, one gets, for all $\eps>0$ and $\lambda>0$,
\begin{equation}
\label{exp_A1K}
A^1_K
(
\mu_{\eps, \lambda} 
)
=
\frac{
\lambda
}
{
\varphi_1( x^+) \varphi_K(x^+)
}
\int_0^1 
g'(y)^2 
\varphi_1( \eps y + x^+)
\varphi_K( \eps y + x^+)
dy.
\end{equation}
Hence, the map $(\eps, \lambda) \mapsto A^1_K( \mu_{\eps, \lambda})$ is also analytic on $(\R^*_+)^2$. Thus, $Q$ is analytic on $(\R^*_+)^2$. 

\smallskip \noindent \emph{For $\eps$ small enough, $Q(\eps, \cdot)$ can take both signs.}
Doing a Taylor expansion with respect to $\eps$ of \eqref{exp_A1K}, one gets the existence of $C>0$ such that for all $\eps>0$ and $\lambda>0$, 
\begin{equation}
\label{estim_A1K}
\left|
A^1_K
(
\mu_{\eps, \lambda} 
)
- 
\lambda
\right| 
\ioe 
C
\lambda \eps. 
\end{equation}
Thus, \eqref{def_Q}, \eqref{exp_fK} and \eqref{estim_A1K}  lead to the existence of $C>0$ such that for all $\eps \in (0,1)$ and $\lambda >0$, 
\begin{equation}
\label{estim_Q}
| Q( \eps, \lambda) - A^1_K( \mu_{\Rref}) - \lambda| \ioe C \lambda \eps.
\end{equation}
Thus, there exists $\eps^*>0$ such that for all $\eps \in (0, \eps^*)$, 
\begin{equation*}
Q
\left(
\eps, 
- 
\frac{
A^1_K( \mu_{\Rref})
}
{
2
}
\right)
<0
\quad 
\text{ and }
\quad 
Q
\left(
\eps, 
- 
\frac{
3 A^1_K( \mu_{\Rref})
}
{
2
}
\right)
>0.
\end{equation*}

\smallskip \noindent \emph{For $\eps$ small enough, $Q(\eps, \cdot)$ is increasing.}
Differentiating \eqref{exp_fK} and \eqref{exp_A1K} with respect to $\lambda$ and performing again an expansion with respect to $\eps$ in the spirit of \eqref{estim_A1K}, one gets the existence of $C>0$ such that for all $\eps>0$ and $\lambda>0$, 
$
\left|
\partial_{\lambda} Q( \eps, \lambda) -1 
\right|
\ioe 
C \eps.
$
Thus, for $\eps>0$ small enough, $\partial_{\lambda} Q(\eps, \cdot)$ is positive. 

\smallskip \noindent \emph{Conclusion.}
Applying the intermediate value theorem, one gets for all $\eps \in (0, \eps^*)$ the existence of 
$\lambda=\lambda(\eps) 
\in 
\left(
-
\frac{
A^1_K( \mu_{\Rref})
}
{
2
},
-
\frac{
3 A^1_K( \mu_{\Rref})
}
{
2
}
\right)
$ 
such that $Q(\eps, \lambda(\eps))=0$, meaning that $\nu_{\eps, \lambda(\eps)}$ defined in \eqref{def_nu_eps_lambda} satisfies \eqref{quad_nul_1} by definition \eqref{def_Q} of $Q$. 

\smallskip \noindent \emph{Step 2.2: The map $\eps \mapsto \lambda(\eps)$ is continuous.}
As for all $\eps \in (0, \eps^*)$, $Q(\eps, \lambda(\eps))=0$, recalling the definition \eqref{def_Q} of $Q$, one has
\begin{equation*}
\lambda(\eps)
= 
\lambda(\eps) 
- 
A^1_K(\mu_{\eps, \lambda(\eps)}) 
- 
A^1_K( \mu_{\Rref}) 
- 
\langle \mu_{\eps, \lambda(\eps)} \varphi_1, \varphi_K \rangle ^2 
A^1_K(\mu_0)
=:
\lambda(\eps)H(\eps) - A^1_K( \mu_{\Rref}).
\end{equation*}
Using \eqref{exp_fK} and \eqref{exp_A1K}, $H$ is analytic on $(0, \eps^*)$ with $|H(\eps)| \ioe C \eps$  for all $\eps \in (0, \eps^*)$. Thus, for all $\eps$ and $\eps_0$ in $(0, \eps^*)$, 
\begin{equation*}
\left|
\lambda(\eps)
- \lambda(\eps_0)
\right|
%
\ioe  
C \eps
| \lambda( \eps) - \lambda(\eps_0) | 
+ 
| \lambda(\eps_0) | 
|H(\eps) - H (\eps_0) |.
\end{equation*}
Reducing $\eps^*$ if needed, 
the continuity of $\eps \mapsto \lambda(\eps)$ on $(0, \eps^*)$ stems from the one of $H$. 

\smallskip \noindent \emph{Step 2.3: The map $\eps \mapsto \lambda(\eps)$ is analytic.}
Let $\eps_0 \in (0, \eps^*)$. By construction, $Q(\eps_0, \lambda(\eps_0))=0.$  Besides, in Step 2.1, we proved that $\partial_{\lambda} Q( \eps_0, \lambda(\eps_0)) > 0$ and  that $Q$ is analytic on $(0, \eps^*) \times \R^*_+$. Hence, by the implicit function theorem, there exists an open neighborhood $\mathcal{V}$ of $\eps_0$, an open neighborhood $\mathcal{W}$ of $\lambda(\eps_0)$ and an analytic function $\Lambda :  \mathcal{V} \rightarrow \mathcal{W}$
such that 
\begin{equation*}
\left( 
\eps \in \mathcal{V}, \ \lambda \in \mathcal{W} \text{ and } Q( \eps, \lambda) =0 
\right)
\quad
\Leftrightarrow 
\quad
\left(
\eps \in \mathcal{V} \text{ and } \lambda=\Lambda( \eps)
\right)
\end{equation*}
As $\eps \mapsto \lambda(\eps)$ is continuous, locally $\lambda= \Lambda$ and thus, $\eps \mapsto \lambda(\eps)$ is analytic on $(0, \eps^*)$. 

\smallskip \noindent \emph{Step 2.4: There exists $\eps$ such that $\nu_{\eps, \lambda(\eps)}$ satisfies \eqref{lin_non_nul} and \eqref{cub_non_nul}.} By a similar computation than the one in \eqref{exp_fK}, one gets, for all $j \in \N^*$, $\eps >0$ and $\lambda>0$, 
\begin{multline*}
\langle \nu_{\eps, \lambda} \varphi_1, \varphi_j \rangle 
=
\langle \mu_{\Rref} \varphi_1, \varphi_j \rangle 
+
\eps^{3/2}
\sqrt{
\frac{
 \lambda
}
{
\varphi_1( x^+) \varphi_K(x^+)
}
}
\int_0^1 
g(y)
\varphi_1( \eps y + x^+)
\varphi_j( \eps y + x^+)
dy
\\
-
\langle \mu_{\eps, \lambda} \varphi_1, \varphi_K \rangle 
\langle \mu_0 \varphi_1, \varphi_j \rangle.
\end{multline*}
As 
$(\eps, \lambda) \mapsto  \langle \mu_{\eps, \lambda} \varphi_1, \varphi_K \rangle$
is analytic on $(\R^*_+)^2$
(see Step 2.1)
and 
$\eps \mapsto \lambda(\eps)$ 
is analytic on $(0, \eps^*)$
(see Step 2.3)
,
for all $j \in \N^*-\{K\}$, 
the map 
$
\eps 
\mapsto 
\langle \nu_{\eps, \lambda(\eps)} \varphi_1, \varphi_j \rangle 
$ 
is analytic on $(0, \eps^*)$. It can also be extended by continuity at zero with the value $\langle \mu_{\Rref} \varphi_1, \varphi_j \rangle \neq 0$ by construction of $\mu_{\Rref}$. Similarly, using \eqref{LB_A1K}, $\eps \mapsto C_K(\nu_{\eps, \lambda(\eps)})$ is analytic on $(0, \eps^*)$ and can be extended continuously at zero with the value $C_K(\mu_{\Rref}) \neq 0$. Thus, the functions 
$
( 
\eps 
\mapsto 
\langle \nu_{\eps, \lambda(\eps)} \varphi_1, \varphi_j \rangle
)_{j \in \N^*- \{K\}}
$ 
and 
$
\eps \mapsto C_K(\nu_{\eps, \lambda(\eps)})
$
are analytic and non-zero on $(0, \eps^*)$. Hence, by the isolated zeros theorem, there exists $\eps \in (0, \eps^*)$, such that for all $j \in \N^*- \{K \}$, $\langle \nu_{\eps, \lambda(\eps)} \varphi_1, \varphi_j \rangle \neq 0$ and $C_K(\nu_{\eps, \lambda(\eps)})\neq0$, meaning that  $\nu_{\eps, \lambda(\eps)}$ satisfies \eqref{cub_non_nul} and \eqref{lin_non_nul}.

\smallskip \noindent \emph{Step 3: Existence of $\mu$ in $H^{11}-H^4_0$ satisfying \eqref{lin_nul}, \eqref{quad_nul_1}, \eqref{quad_nul_2}, \eqref{cub_non_nul}, \eqref{supp_mu}, \eqref{cond_bord} and \eqref{lin_non_nul}.} The proof of Step 3 is quite similar to the one of Step 2. Let $\hat{\mu}_{\Rref}$ constructed at Step 2 satisfying \eqref{lin_nul}, \eqref{quad_nul_1}, \eqref{cub_non_nul}, \eqref{supp_mu}, \eqref{cond_bord} and \eqref{lin_non_nul}. As in Step 2, the goal is to prove that if $\hat{\mu}_{\Rref}$ doesn't already satisfy \eqref{quad_nul_2} (we assume that  $A^2_K( \hat{\mu}_{\Rref}) <0$), then one can construct a perturbation of $\hat{\mu}_{\Rref}$ satisfying \eqref{quad_nul_2} while conserving the properties already satisfied by $\hat{\mu}_{\Rref}$. Let $\hat{J}^{+}$ and $I^{+}$ (resp.\ $\hat{J}^{-}$ and $I^{-}$) open disjoint intervals of $(\overline{x}- \delta, \overline{x}) - J^-$ (resp.\ $(\overline{x}, \overline{x}+ \delta) - J^+$). In this step, we consider the following new `basis' functions. 
\begin{itemize}
%
\item Let $\hat{\mu}_0$ in $C_c^{\infty}\left(\frac{1+\overline{x}+\delta}{2}, 1 \right)$ such that $\langle \hat{\mu}_0 \varphi_1, \varphi_K \rangle =1$ and $A^1_K( \hat{\mu}_0)=0$. 
\item By \cite[Theorem A.4]{B21bis}, there exists $\mu_1^{\pm}$ in $C_c^{\infty}(I^{\pm})$ such that
$
\langle 
\mu_1^{\pm}
\varphi_1,
\varphi_K
\rangle=0
$
and
$ 
A^1_K( \mu_1^{\pm}) = \pm 1. 
$
\item For all $\eps >0$ and $\lambda \neq 0$, we define
\begin{equation*}
\hat{\mu}_{\eps, \lambda}(x)
:= 
\eps^{5/2}
\sqrt{
\frac{
| \lambda|
}
{
|\varphi_1(x(\lambda)) \varphi_K(x(\lambda))|
}
} 
\
g 
\left(
\frac{x-x(\lambda)}{\eps} 
\right),
\end{equation*}
where $g \in C^{\infty}_c(0,1)$ such that $\int_0^1 g^{(3)}(y)^2 dy=1$ and $x(\lambda):=\hat{x}^+ \1_{\lambda>0}+\hat{x}^- \1_{\lambda<0}$, where $\hat{x}^{\pm}$ are in $\hat{J}^{\pm}$. Notice that for $\eps$ small enough, the support of $\hat{\mu}_{\eps, \lambda}$ is in $\hat{J}^- \cup \hat{J}^+$. Formally, this time, $\hat{\mu}_{\eps, \lambda}$ is constructed so that $A^2_K( \hat{\mu}_{\eps, \lambda}) \approx \lambda$. 
\end{itemize}
In this step, we consider perturbations of $\hat{\mu}_{\Rref}$ of the form, 
\begin{equation*}
\hat{\nu}_{\eps, \lambda}
:=
\hat{\mu}_{\Rref}
+
\hat{\mu}_{\eps, \lambda}
-
\langle
\hat{\mu}_{\eps, \lambda} 
\varphi_1,
\varphi_K
\rangle
\hat{\mu}_0
+
\sqrt{
\left| 
A^1_K( \hat{\mu}_{\eps, \lambda}) 
\right|
}
\mu_1^{
-\sign(
A^1_K( \hat{\mu}_{\eps, \lambda})
)
},
\quad 
\eps >0, 
\quad 
\lambda \neq 0.
\end{equation*}
Once again, we made sure that all the functions considered have disjoint supports (see Figure \ref{supports_Step3}) so that the quadratic and cubic forms can be seen as additive. 

\begin{figure}[!h]
\centering
\begin{tikzpicture}
\draw (0,-0.1) -- (0, 0.1) ;
\draw (0,-0.01) node[below]{$\scriptstyle 0$} ;
\draw (0,0) -- (1,0) ;
\draw [line width=0.7mm,color=Brown] (0,0) -- (1.5, 0) ;
\draw [Brown] (0.7,-0.01) node[above]{$\scriptstyle{\hat{\mu}_{\Rref}}$} ;
\draw (1.5,-0.1) -- (1.5, 0.1) ;
\draw(1.5,-0.01) node[below]{$\scriptstyle \eta$} ;
\draw (3,-0.1) -- (3, 0.1) ;
\draw (3,-0.01) node[below]{$\scriptstyle \overline{x}-\delta$} ;
\draw (1,0) -- (1.5,0) ;
\draw  (1.8,0) -- (2.5,0) ;
\draw[line width=0.7mm,color=Green]  (3.8,0) -- (4.5,0) ;
\draw [Green] (4.2, 0.01) node[below]{$\scriptstyle I^-$} ;
\draw [Green] (4.2,-0.01) node[above]{$\scriptstyle{\mu_1^{\pm}}$} ;
\draw[line width=0.7mm,color=Brown]  (5.2,0) -- (5.7,0) ;
\draw [Brown] (5.4, 0.01) node[below]{$\scriptstyle J^-$} ;
\draw [Brown] (5.4,-0.01) node[above]{$\scriptstyle{\hat{\mu}_{\Rref}}$} ;
\draw (5.5, -0.1) -- (5.5, 0.1) ;
%
\draw[line width=0.7mm,color=Red]  (5.9,0) -- (6.2,0) ;
\draw (6.1, -0.1) -- (6.1, 0.1) ;
\draw [Red] (6.1, 0.01) node[below]{$\scriptstyle \hat{J}^-$} ;
\draw [Red] (6.1,-0.01) node[above]{$\scriptstyle{\hat{\mu}_{\eps, \lambda}}$} ;
%
\draw  (1.5,0) -- (6,0) ;
%
\draw (6,0) -- (6.5,0) ;
\draw (6.5,-0.1) -- (6.5, 0.1) ;
\draw (6.5,-0.01) node[below]{$\scriptstyle \overline{x}$} ;
\draw (6.5, 0) -- (7,0) ;
%
\draw  (7,0) -- (12,0) ;
%
\draw[line width=0.7mm,color=Red]  (6.7,0) -- (7.2,0) ;
\draw [Red] (7, 0.01) node[below]{$\scriptstyle \hat{J}^+$} ;
\draw [Red] (7,-0.01) node[above]{$\scriptstyle{\hat{\mu}_{\eps, \lambda}}$} ;
\draw (7, -0.1) -- (7, 0.1) ;
\draw[line width=0.7mm,color=Green]  (9.5,0) -- (10,0) ;
\draw [Green] (9.7, 0.01) node[below]{$\scriptstyle I^+$} ;
\draw [Green] (9.8,-0.01) node[above]{$\scriptstyle{\mu_1^{\pm}}$} ;
\draw[line width=0.7mm,color=Brown]  (8,0) -- (9,0) ;
\draw [Brown] (8.5, 0.01) node[below]{$\scriptstyle J^+$} ;
\draw [Brown] (8.5,-0.01) node[above]{$\scriptstyle{\hat{\mu}_{\Rref}}$} ;
\draw (8.5, -0.1) -- (8.5, 0.1) ;
%
\draw (12, 0) -- (12.5,0) ;
\draw (10.3,-0.1) -- (10.3, 0.1) ;
\draw (10.3,-0.01) node[below]{$\scriptstyle \overline{x}+\delta$} ;
\draw (12.5, 0) -- (13.5,0) ;
\draw (13.5,-0.1) -- (13.5, 0.1) ;
\draw (13.5,-0.01) node[below]{$\scriptstyle 1$} ;
\draw (11.9,-0.1) -- (11.9, 0.1) ;
\draw (11.9,-0.01) node[below]{$\scriptstyle \frac{1+\overline{x}+\delta}{2}$};
\draw[line width=0.7mm,color=Brown]  (10.5,0) -- (11.5,0) ;
\draw [Brown] (11,-0.01) node[above]{$\scriptstyle{\hat{\mu}_{\Rref}}$} ;
\draw[line width=0.7mm,color=Blue]  (12.3,0) -- (13.2,0) ;
\draw [Blue] (12.8,-0.01) node[above]{$\scriptstyle{\hat{\mu}_0}$} ;
\end{tikzpicture}
\caption{The supports of the functions used in Step 3 are depicted.
}
\label{supports_Step3}
\end{figure}

\noindent Moreover, by construction, for all $(\eps, \lambda) \in \R^*_+ \times \R^*$, $\hat{\nu}_{\eps, \lambda}$ already satisfies \eqref{lin_nul}, \eqref{quad_nul_1}, \eqref{supp_mu} and \eqref{cond_bord}. Then, define 
\begin{multline}
\label{def_Q_hat}
\hat{Q}( \eps, \lambda) 
:=
A^2_K
(
\hat{\nu}_{\eps, \lambda}
)
=
A^2_K
(
\hat{\mu}_{\Rref}
)
+
A^2_K
(
\hat{\mu}_{\eps, \lambda}
)
+
\langle
\hat{\mu}_{\eps, \lambda} 
\varphi_1,
\varphi_K
\rangle^2
A^2_K(
\hat{\mu}_0
)
\\
+
| A^1_K( \hat{\mu}_{\eps, \lambda}) |
A^2_K
(
\mu_1^{
-\sign(
A^1_K( \hat{\mu}_{\eps, \lambda})
)
}
).
\end{multline}
 The end of Step 3 is exactly the same as the one of Step 2. 
%
%
%
%
%
%
%
%
%
%
\begin{itemize}
\item Applying the intermediate value theorem to $\hat{Q}(\eps, \cdot)$, one proves the existence of a continuous map $\eps \mapsto \lambda(\eps)$ such that for all $\eps$ small enough, $\hat{Q}(\eps, \lambda(\eps))=0$ and thus, $\hat{\nu}_{\eps, \lambda(\eps)}$ satisfies \eqref{quad_nul_2}. Notice that because of the last term in \eqref{def_Q_hat},
one could fear some lack of regularity of $\hat{Q}$. However, as $\lambda \mapsto A^1_K( \hat{\mu}_{\eps, \lambda})$ is continuous on $\R^*_+$ with a computation similar to \eqref{exp_A1K}, 
$\sign(
A^1_K( \hat{\mu}_{\eps, \lambda})
)$
is locally constant around $\lambda=- A^2_K( \hat{\mu}_{\Rref})$. 
\item Then, one uses the implicit function theorem to get that $\eps \mapsto \lambda(\eps)$ is analytic and thus, with the isolated zeros theorem, to get the existence of an $\eps$ such that $\hat{\nu}_{\eps, \lambda(\eps)}$ satisfies \eqref{cub_non_nul} and \eqref{lin_non_nul}. 
\end{itemize}

\smallskip \noindent \emph{Step 4: Existence of $\mu$ in $H^{11} \cap H^4_0$ satisfying 
\eqref{lin_nul},
\eqref{quad_nul_1},
\eqref{quad_nul_2},
\eqref{quad_non_nul},
\eqref{cub_non_nul},
\eqref{supp_mu},
\eqref{cond_bord} 
and
\eqref{lin_non_nul}
.} 
Let $\tild{\mu}_{\Rref}$ constructed at Step 3 satisfying \eqref{lin_nul}, \eqref{quad_nul_1}, \eqref{quad_nul_2}, \eqref{cub_non_nul}, \eqref{supp_mu}, \eqref{cond_bord} and \eqref{lin_non_nul}. Assume that $A^3_K( \tild{\mu}_{\Rref}) = 0$, otherwise $\tild{\mu}_{\Rref}$  already satisfies \eqref{quad_non_nul}. Let $\tild{J}^-$ and $\tild{J}^+$ two open disjoint intervals of respectively $(\overline{x}- \delta, \overline{x}) - (J^- \cup \hat{J}^- \cup I^-)$ and $(\overline{x}, \overline{x}+ \delta) - (J^+ \cup \hat{J}^+ \cup I^+)$. By 
\cite[Theorem A.4]{B21bis}, there exists $\nu$ in $C_c^{\infty}(\tild{J}^{\pm})$ such that 
\begin{equation*}
\langle
\nu
\varphi_1, 
\varphi_K
\rangle
=
A^1_K( \nu) 
= 
A^2_K(\nu)
=0 
\quad
\text{ and }
\quad 
A^3_K(\nu)=1.
\end{equation*}
Define for all $\eps \in \R$, 
$
\nu_{\eps} := \tild{\mu}_{\Rref} + \eps \nu.
$
By construction, for all $\eps \in \R^*$, $\nu_{\eps}$ satisfies \eqref{lin_nul}, \eqref{quad_nul_1}, \eqref{quad_nul_2}, \eqref{quad_non_nul}, \eqref{supp_mu} and \eqref{cond_bord} because the functions have disjoint supports. Moreover, the maps $\eps \mapsto C_K( \nu_{\eps})$ and $\eps \mapsto \langle \nu_{\eps} \varphi_1, \varphi_j \rangle$ for all $j \in \N^* - \{K \}$ are polynomial, so analytic and non-vanishing at zero by construction of $\tild{\mu}_{\Rref}$. So, by the isolated zeros theorem, there exists $\eps \in \R^*$ such that $\nu_{\eps} $ satisfies \eqref{cub_non_nul} and \eqref{lin_non_nul}.
\end{proof}

\textbf{Acknowledgments.} The author would like to thank Karine Beauchard and Frédéric Marbach (École Normale Supérieure de Rennes) for having interested her in this problem, for many fruitful discussions and helpful advices. 

\bibliography{Master_Biblio}
\bibliographystyle{plain}

\end{document}